\DeclareMathOperator{\dssnr}{\delta_{\text{S}}}
\DeclareMathOperator{\tr}{tr}
\DeclareMathOperator{\im}{Im}
\DeclareMathOperator{\rank}{rank}
\def\Xcal{{\mathcal{X}}}
\def\Expect{{\mathbb{E}}}
\def\xbm{{\bm{x}}}
\def\Zbf{{\bm{Z}}}
\def\R{{\mathbb{R}}}
\def\reals{{\mathbb{R}}}
\def\bA{{\bm{A}}}
\def\bB{{\bm{B}}}
\def\bu{{\bm{u}}}
\def\bw{{\bm{w}}}
\def\bv{{\boldsymbol{v}}}
\def\bM{{\bf{M}}}
\def\bV{{\bm{V}}}
\def\bU{{\bm{U}}}
\def\bx{\bm{x}}
\def\bS{{\bm{S}}}
\def\b0{{\bm{0}}}
\def\bX{{\bm{X}}}
\def\bY{{\bm{Y}}}
\def\bI{{\bm{I}}}
\def\bP{{\bm{P}}}
\def\bQ{{\bm{Q}}}
\def\bR{{\bm{R}}}
\def\bSigma{{\bm{\Sigma}}}
\def\calA{{\mathcal{A}}}
\def\calR{{\mathcal{R}}}
\def\calS{{\mathcal{S}}}
\def\calX{{\mathcal{X}}}
\def\psdgeq{\succeq}
\def\psdleq{\preceq}
\theoremstyle{plain}
\newtheorem{theorem}{Theorem}[section]
\newtheorem{lemma}[theorem]{Lemma}
\theoremstyle{definition}
\begin{document}

\begin{frontmatter}
\title{Theoretical Guarantees for the Subspace-Constrained Tyler's Estimator 
}

\begin{aug}

\author[A]{\fnms{Gilad}~\snm{Lerman}\ead[label=e1]{lerman@umn.edu}}
\and
\author[B]{\fnms{Teng}~\snm{Zhang}\ead[label=e3]{teng.zhang@ucf.edu}
}
\thanks{Both authors are corresponding authors.}
\address[A]{School of Mathematics,
University of Minnesota 
\printead[presep={ ,\ }]{e1}}

\address[B]{Department of Mathematics,
University of Central Florida 
\printead[presep={,\ }]{e3}}
\end{aug}

\begin{abstract}
This work analyzes the subspace-constrained Tyler's estimator (STE), a method designed to recover a low-dimensional subspace from a dataset that may be heavily corrupted by outliers. The STE has previously been shown to be competitive for fundamental computer vision problems. We assume a weak inlier-outlier model and allow the inlier fraction to fall below the threshold at which robust subspace recovery becomes computationally hard. We show that, in this setting, if the initialization of STE satisfies a certain condition, then STE—which is computationally efficient—can effectively recover the underlying subspace. Furthermore, we establish approximate recovery guarantees for STE in the presence of noisy inliers. Finally, under the asymptotic generalized haystack model, we demonstrate that STE initialized with Tyler's M-estimator (TME) recovers the subspace even when the inlier fraction is too small for TME to succeed on its own.
\end{abstract}

\begin{keyword}[class=MSC]
\kwd[Primary ]{62G35}
\kwd{68Q32}
\kwd[; secondary ]{62J10,47A55}
\end{keyword}

\begin{keyword}
\kwd{Robust subspace recovery}
\kwd{Robust statistics}
\kwd{Tyler's M-estimator}
\kwd{Matrix perturbation theory}
\end{keyword}

\end{frontmatter}

\section{Introduction}

Robust subspace recovery (RSR) is the problem of identifying a low-dimensional linear subspace that best represents a dataset while remaining resilient to outliers. This problem has inspired a wide range of mathematical approaches, as surveyed in~\cite{lerman2018overview}. A key application of RSR is fundamental matrix estimation in computer vision. In this context, the heuristic Random Sample Consensus (RANSAC)  method~\cite{fischler1981random}—which relies on minimal subset sampling and ignores the bulk of the data during model generation—has consistently outperformed many principled RSR methods that aim to estimate a global model using all data points. A recent, notable exception is the subspace-constrained Tyler's estimator (STE)~\cite{Yu2024}. It was shown in~\cite{Yu2024} that for fundamental matrix estimation, STE achieves accuracy competitive with RANSAC and significantly outperforms other RSR algorithms. More recently, STE has been shown to be crucial as a preprocessor for monocular camera pose estimation, where it estimates initial relative positions and filters outliers~\cite{li2025cyclesync}. Furthermore, the STE framework has been extended to robust tensor decomposition via the higher-order STE (HOSTE), which was shown to outperform the standard higher-order SVD (HOSVD) for trifocal tensor synchronization in computer vision~\cite{Miao_trifocal_NEURIPS2024}.

Current theory shows that beyond a certain threshold on the fraction of outliers, the RSR problem is computationally hard, and thus one does not expect an efficient algorithm to generically solve it in this regime. A computationally efficient algorithm that is proved to perform well as long as the fraction of outliers does not surpass this threshold is the Tyler’s M-estimator (TME)~\cite{tyler1987distribution}. It is a robust shape estimator, whose recovery guarantees in the RSR context were established by Zhang~\cite{zhang2016robust}. TME has been shown to outperform many other methods in various synthetic or stylized settings~\cite{lerman2018fast,lerman2018overview}. However, in fundamental matrix estimation, TME is less competitive: it is outperformed by both RANSAC and STE~\cite{Yu2024}. Indeed, the theoretical limit on the outlier fraction that TME can tolerate is insufficient for this practical scenario. The STE algorithm improves on TME by iteratively estimating the shape matrix constrained to a $d$-dimensional subspace, rather than the full ambient space. This modification enables STE to succeed at outlier levels where the problem is considered computationally hard, provided it is well-initialized—a property we formalize and prove in this work.

\subsection{Previous Relevant Theories}

To motivate our theoretical contributions, we review existing recovery guarantees for RSR. The standard formulation of the noiseless RSR problem assumes a dataset consisting of $n_1$ inliers lying exactly on a $d$-dimensional subspace $L_* \subset \mathbb{R}^D$, and $n_0$ outliers lying off $L_*$. We refer to such a dataset as a noiseless inlier–outlier dataset. It is common to assume that $L_*$ is a linear $d$-dimensional subspace—referred to as a $d$-subspace for brevity. The central question in noiseless RSR is whether one can exactly recover the underlying subspace $L_*$.

A natural quantity in this setup is the ratio $n_1/n_0$, and one seeks the smallest such ratio that permits exact recovery. Following~\cite{lerman2018overview,maunu19ggd,maunu2019robust}, this is often viewed as a signal-to-noise ratio (SNR), where the inliers represent the signal and the outliers represent the noise. Building on earlier results~\cite{hardt2013algorithms,zhang2016robust}, Yu et al.~\cite{Yu2024} proposed scaling $n_1$ and $n_0$ by the dimension and codimension of the inlier subspace, $d$ and $D-d$, respectively. This leads to the \textit{dimension-scaled SNR}, denoted by $\dssnr$:
\begin{equation*}
    \dssnr := \frac{n_1/d}{n_0/(D-d)}.
\end{equation*}
This quantity characterizes the computational hardness of RSR. Hardt and Moitra~\cite{hardt2013algorithms} showed that when $\dssnr < 1$, the noiseless RSR problem is Small Set Expansion (SSE)-hard. The latter complexity-theoretic property is conjectured to be equivalent to NP-hardness~\cite{raghavendra2010graph}. In the special case $d = D-1$, Hardt and Moitra invoked a result from~\cite{Khachiyan_complexity1995} to show that the problem is NP-hard.

\begin{theorem}[\cite{hardt2013algorithms}]
\label{thm:sse_hard}
The noiseless RSR problem is SSE-hard if $\dssnr < 1$. Furthermore, if $\dssnr < 1$ and $d = D-1$, then the problem is NP-hard.
\end{theorem}

Hardt and Moitra~\cite{hardt2013algorithms} also showed that subspace-search algorithms can recover $L_*$ when $\dssnr > 1$, provided the dataset is in general position with respect to $L_*$ (see~\cite{hardt2013algorithms} for details). While their algorithms provide theoretical guarantees, they are not practically competitive. In contrast, Zhang~\cite{zhang2016robust} proposed applying TME as a practical method for RSR and proved a recovery guarantee under general position assumptions. Specifically, the inliers are in general position with respect to $L_*$ if any $d$ of them are linearly independent; similarly, the outliers are in general position with respect to $L_*^\perp$ if the projections of any subset of size $D-d$ onto $L_*^\perp$ are linearly independent.

\begin{theorem}[\cite{zhang2016robust}]
\label{thm:zhang16}
Let $\mathcal{X}$ be a noiseless inlier–outlier dataset with an underlying $d$-subspace $L_*$. Assume the inliers are in general position with respect to $L_*$ and the outliers are in general position with respect to $L_*^\perp$. If $\dssnr > 1$, then TME exactly recovers $L_*$.
\end{theorem}

While Theorem~\ref{thm:sse_hard} establishes worst-case hardness, recovery is possible even when $\dssnr \ll 1$ under probabilistic assumptions on the outliers. For example, in the haystack model~\cite{lerman15reaper,GMS_2011,lerman2018fast}, outliers are sampled from an isotropic Gaussian or uniform distribution on the sphere. Since this setting is often too restrictive for real data, Maunu et al.~\cite{maunu19ggd} proposed a generalized haystack model, in which outliers are sampled from an anisotropic Gaussian distribution (see also its application in \cite{lerman2025globalconvergenceiterativelyreweighted}, which justifies \cite{LM2017fms}).

Finally, we note that~\cite{lerman15reaper,GMS_2011,maunu19ggd,lerman2025globalconvergenceiterativelyreweighted} also established generic recovery conditions that do not rely on $\dssnr$, but these conditions are typically less interpretable.

\subsection{Contribution}
This paper extends the above-mentioned theories in several ways:
\begin{enumerate}
    \item \textbf{Recovery in the hard regime ($\dssnr < 1$):} We establish that under suitable alignment conditions on the initialization, STE can exactly recover the underlying subspace even when the problem is computationally hard, i.e., $\dssnr < 1$ (Theorem~\ref{thm:main}).
    
    \item \textbf{Stability to noise:} We demonstrate that STE is stable under noise. Specifically, for inliers corrupted by noise of level $\epsilon$, we prove that STE approximates the underlying subspace with an error of order $O(\sqrt{\epsilon})$ (Theorem~\ref{thm:noisy}).
    
    \item \textbf{TME initialization under the generalized haystack model:} We prove that under the asymptotic generalized haystack model~\cite{maunu19ggd}, there exists a threshold $\eta_0 < 1$ such that whenever $\eta_0 < \dssnr < 1$, STE initialized with TME succeeds in recovering the subspace. This result highlights that STE can succeed in regimes where TME alone fails (Theorem~\ref{thm:haystack}).
\end{enumerate}

\subsection{Notation}
\label{sec:notation}

We use bold uppercase and lowercase letters to denote matrices and column vectors, respectively. For a matrix $\bA$, we denote its trace by $\tr(\bA)$ and its image (i.e., column space) by $\im(\bA)$. We denote by $S_{+}(D)$ and $S_{++}(D)$ the sets of positive semidefinite (p.s.d.) and positive definite (p.d.) matrices in $\mathbb{R}^{D \times D}$, respectively. 
Let $\bI_k$ denote the identity matrix in $\mathbb{R}^{k \times k}$. When clear from context, we abbreviate $\bI_k$, $S_{+}(D)$, and $S_{++}(D)$ by $\bI$, $S_{+}$, and $S_{++}$, respectively. For $k \leq D$, we denote by $O(D,k)$ the set of semi-orthogonal matrices in $\mathbb{R}^{D \times k}$, i.e., $\bU \in O(D,k)$ if and only if $\bU \in \mathbb{R}^{D \times k}$ and $\bU^\top \bU = \bI_{k}$. We also denote by $O(k)$ the orthogonal matrices in    $\mathbb{R}^{k \times k}$, so $O(k)\equiv O(k,k)$. 
For a $d$-subspace $L \subset \mathbb{R}^D$, we denote by $\bP_L$ the $D \times D$ matrix representing the orthogonal projector onto $L$. This matrix is symmetric and satisfies $\bP_L^2 = \bP_L$ and $\im(\bP_L) = L$. We also fix an arbitrary matrix $\bU_L \in O(D,d)$ such that $\bU_L \bU_L^\top = \bP_L$ (such $\bU_L$ is determined up to right multiplication by an orthogonal matrix in $O(d)$). For $\bx \in \R^k$, we denote by $\|\bx\|$ its Euclidean norm, and for $\bA \in \R^{k\times m}$, we denote by $\|\bA\|$ its spectral norm. We denote the singular values of $\bA \in S_{+}(D)$, which coincide with its eigenvalues, by 
$\sigma_1(\bA) \geq \sigma_2(\bA) \geq \cdots \geq \sigma_D(\bA)$. We write $\bA^{-\top}:=(\bA^{\top})^{-1}$.

We denote the given RSR dataset by $\calX = \{\xbm_i\}_{i=1}^N \subset \mathbb{R}^D$, and the underlying $d$-subspace in $\mathbb{R}^D$ by $L_*$. In the noiseless case, the set of inliers is $\calX_{\text{in}} := \calX \cap L_*$, where for the noisy case $\calX_{\text{in}}$ is defined later in  \eqref{eq:noisy_inliers}. The set of outliers is  $\calX_{\text{out}} := \calX \setminus \calX_{\text{in}}$. The respective numbers of inliers and outliers are denoted by $n_1 = |\calX_{\text{in}}|$ and $n_0 = |\calX_{\text{out}}|$, so $N = n_0 + n_1$.

\subsection{Structure of the Rest of the Paper}
The remainder of the paper is organized as follows. Section~\ref{sec:ste} reviews the TME and STE algorithms. Section~\ref{sec:theory} presents and motivates our main theoretical results. Section~\ref{sec:supp_theory} provides the proofs for the noiseless case and outlines the proof strategy for the noisy setting. The supplementary material contains the remaining technical details: Section~\ref{sec:calculate_kappa1} discusses the auxiliary parameters $\kappa_1$, $\kappa_2$, $\kappa_3$, $\mathcal{A}$, and $\mathcal{R}$ under special models; Section~\ref{sec:proof_thm_noisy} provides the complete proof of Theorem~\ref{thm:noisy} (omitted from the main text due to its structural similarity to the proof of Theorem~\ref{thm:main}); and Section~\ref{sec:proof_lemmas} contains the proofs of all technical lemmas.

\section{Brief Review of the TME and STE Algorithms}
\label{sec:ste}
For a dataset $\calX = \{\xbm_i\}_{i=1}^N \subset \mathbb{R}^D$, TME produces an empirical estimate of a robust shape matrix, which is a robust covariance matrix, determined up to a positive scalar multiple. For concreteness and numerical stability, practical implementations often fix a scale by imposing a trace constraint.

The TME algorithm~\cite{tyler1987distribution} starts with an arbitrary $\bSigma^{(0)} \in S_{++}(D)$ and iteratively computes
\begin{equation}
\label{eq:tme_iteration}
\bSigma^{(k)} := \frac{D}{N}\sum_{i=1}^N \frac{\xbm_i\xbm_i^\top}{\xbm_i^\top\left(\bSigma^{(k-1)}\right)^{-1}\xbm_i}, \quad k \geq 1,
\end{equation}
where, as noted above, $\bSigma^{(k)}$ is typically further scaled to have trace 1. The factor $D/N$ is included so that the equation remains valid as an unnormalized update. For sufficiently large $k$, $\bSigma^{(k)}$ provides a robust approximation for the shape matrix, and the desired $d$-subspace is spanned by its top $d$ eigenvectors. Under general conditions, the sequence $\bSigma^{(k)}$ converges to the solution of the fixed-point problem~\citep[Theorem 2.2]{tyler1987distribution}:
\begin{equation}\label{eq:TME_definition}
\bSigma=\frac{D}{N}\sum_{i=1}^N \frac{\xbm_i\xbm_i^\top}{\xbm_i^\top\bSigma^{-1}\xbm_i}.
\end{equation}
Furthermore, this solution is unique up to scalar multiplication~\cite[Theorem 2.1]{tyler1987distribution}. We refer to this solution, or limit point, as the \emph{TME solution}.

The STE framework~\cite{Yu2024} is designed to exploit the $d$-subspace structure directly, rather than estimating the full shape matrix. It follows a similar update step as TME:
\begin{equation}
\label{eq:ste_prelim_iteration}
\Zbf^{(k)} := \sum_{i=1}^N \frac{\xbm_i\xbm_i^\top}{\xbm_i^\top\left(\bSigma^{(k-1)}\right)^{-1}\xbm_i}, \quad k \geq 1.
\end{equation}
It first computes the top $d$ eigenvalues, $\{\sigma_i\}_{i=1}^d$, and eigenvectors, $\{\bu_i\}_{i=1}^d$, of $\Zbf^{(k)}$. For numerical stability, we use the singular value decomposition to compute these. It then replaces each of the bottom $D - d$ eigenvalues of $\Zbf^{(k)}$ with $\gamma \, \bar{\sigma}_{\text{tail}}$, where $\gamma$ is a fixed shrinkage parameter and $\bar{\sigma}_{\text{tail}}$ denotes the average of the remaining $D - d$ smallest eigenvalues:
\begin{equation}
\label{eq:last_eig_scaling}
    \bar{\sigma}_{\text{tail}} \equiv \bar{\sigma}_{\text{tail}}(\Zbf^{(k)}):=  \frac{1}{D-d}\sum_{i=d+1}^D\sigma_i.
\end{equation}
The estimator $\bSigma^{(k)}$ is then obtained using the shrinkage parameter $\gamma$:
\begin{equation}
\label{eq:ste_iteration}
\bSigma^{(k)} := \sum_{i=1}^d \sigma_i \cdot \bu_i  \bu_i^\top + \gamma\bar{\sigma}_{\text{tail}} \cdot \left(\bI-\sum_{i=1}^d \bu_i  \bu_i^\top \right),
\end{equation}
where, similar to TME, $\bSigma^{(k)}$ is typically further scaled to have trace 1 for numerical stability.

The algorithm terminates when $\|\bSigma^{(k)}-\bSigma^{(k-1)}\|$ is sufficiently small. The output $d$-subspace is spanned by the top $d$ eigenvectors of $\bSigma^{(K)}$ obtained at the last iteration. We note that both TME and STE are invariant to scaling of the data by an arbitrary parameter.

\textbf{Theoretical simplification.} While the trace normalization mentioned above is helpful for numerical stability, in practice, the subspace spanned by the eigenvectors of $\bSigma^{(k)}$ is invariant to scalar multiplication. Therefore, to simplify the mathematical analysis in the remainder of this paper, we analyze the evolution of the unnormalized sequences defined strictly by the update equations \eqref{eq:tme_iteration} and \eqref{eq:ste_iteration}, disregarding the trace scaling comments. This simplification does not affect the theoretical guarantees regarding subspace recovery.

\section{Review and Discussion of the Recovery Theory for STE}
\label{sec:theory}

We now present our main theoretical results, deferring the detailed proofs to Section~\ref{sec:supp_theory}. The exposition is organized as follows: Section~\ref{sec:k1} introduces the quantity $\kappa_1$, which is central to our theoretical framework. Section~\ref{sec:additional_quantities} introduces auxiliary quantities and combines them to formulate a necessary lower bound on $\kappa_1$. Based on these definitions, Section~\ref{sec:theory_noiseless} establishes the main recovery guarantee for the noiseless case. Section~\ref{sec:theory_noise} extends this theory to the noisy setting.  Section~\ref{sec:analogy} reviews the key idea of the proof of Theorem~\ref{thm:zhang16}, according to~\cite{zhang2016robust}, to motivate our approach and the formulation of our recovery guarantees. Finally, Section~\ref{sec:haystack} analyzes the noiseless recovery problem under the asymptotic generalized haystack model to provide sharper estimates and clearer intuition.  

\subsection{A Key Quantity for STE}
\label{sec:k1}
We introduce a quantifier for the ``SNR associated with the initialization matrix.'' We first provide a formal definition, then demonstrate how to compute this quantity in special cases, and finally offer intuition regarding its geometric meaning.

\textbf{The initialization-dependent SNR $\boldsymbol{\kappa_1}$:} We define $\kappa_1$ for the initial stage of STE, depending on $\bSigma^{(0)}$. In Section~\ref{sec:supp_theory}, we generalize this definition to any iteration $k$.

For the initialization matrix $\bSigma^{(0)} \in S_{++}$ and subspaces $L_1, L_2 \in \{L_*, L_*^\perp\}$, we denote $\bSigma^{(0)}_{L_1,L_2} = \bU_{L_1}^\top\bSigma^{(0)}\bU_{L_2}$. We define the SNR associated with $\bSigma^{(0)}$ as 
\begin{equation*}
\kappa_1 = \frac{\sigma_d\Big(\bSigma^{(0)}_{L_*,L_*} - \bSigma^{(0)}_{L_*,L_*^\perp}\bSigma^{(0)\,-1}_{L_*^\perp,L_*^\perp}\bSigma^{(0)}_{L_*^\perp,L_*}\Big)}{\sigma_1\Big(\bSigma^{(0)}_{L_*^\perp,L_*^\perp}\Big)}.
\end{equation*}
Mathematically, $\kappa_1$ is the ratio between the smallest eigenvalue of the Schur complement~\cite{Zhang_schur_2005} of $\bSigma^{(0)}_{L_*^\perp,L_*^\perp}$ in $\bSigma^{(0)}$ and the largest eigenvalue of $\bSigma^{(0)}_{L_*^\perp,L_*^\perp}$.

\textbf{Examples of calculating $\boldsymbol{\kappa_1}$:} We first compute $\kappa_1$ for the default initialization $\bSigma^{(0)}=\bI$. In this case, $\kappa_1=1$ since
\begin{equation}
\label{eq:simple_example_I}
\bSigma^{(0)}_{L_*,L_*} = \bI_{d\times d}, \quad \bSigma^{(0)}_{L_*^\perp,L_*^\perp} = \bI_{(D-d)\times (D-d)}, \quad \text{and} \quad \bSigma^{(0)}_{L_*^\perp,L_*} = (\bSigma^{(0)}_{L_*,L_*^\perp})^{\top} = \bm{0}_{(D-d)\times d}.
\end{equation}

Next, consider an example where we start with an initial subspace $\hat{L}$. To form a corresponding $\bSigma^{(0)} \in S_{++}$, we choose a small $\alpha > 0$ and set
\begin{equation}\label{eq:STE_init}
    \bSigma^{(0)} = \Pi_{\hat{L}} + \alpha\bI.
\end{equation}
We quantify the distance between $\hat{L}$ and $L_*$ using the largest principal angle, denoted by $\theta_1$ (reviewed in Section~\ref{sec:calculate_kappa1}). We claim that in this case
\begin{equation}
\kappa_1 = \frac{\alpha+\alpha^2}{(\sin^2\theta_1 +\alpha)^2},
\label{eq:kappa1_angle}
\end{equation}
and prove this claim in Section~\ref{sec:calculate_kappa1}. It follows that
\begin{equation}
\label{eq:example_k1_first}
\text{if } \ \alpha \geq \sin^2\theta_1, \ \text{ then } \ \kappa_1 \geq \frac{\alpha}{(2\alpha)^2} = \frac{1}{4\alpha}.
\end{equation}
Thus, for sufficiently small $\theta_1$ and $\alpha = \sin^2\theta_1$, $\kappa_1$ is large.

\textbf{Intuition and further ideas behind $\boldsymbol{\kappa_1}$:}
To gain better intuition, we express the initial estimator $\bSigma^{(0)}$ as a $2\times 2$ block matrix in the basis of $L_*$ and $L_*^\perp$. Defining $\bSigma' = \bSigma^{(0)}_{L_*,L_*^\perp}\bSigma^{(0)\,-1}_{L_*^\perp,L_*^\perp}\bSigma^{(0)}_{L_*^\perp,L_*}$, we decompose this block matrix as follows:
\begin{equation*}
\begin{pmatrix}
\bSigma^{(0)}_{L_*,L_*} & \bSigma^{(0)}_{L_*,L_*^\perp} \\
\bSigma^{(0)}_{L_*^\perp,L_*} & \bSigma^{(0)}_{L_*^\perp,L_*^\perp}
\end{pmatrix}
=
\begin{pmatrix}
\bSigma' & \bSigma^{(0)}_{L_*,L_*^\perp} \\
\bSigma^{(0)}_{L_*^\perp,L_*} & \bSigma^{(0)}_{L_*^\perp,L_*^\perp}
\end{pmatrix}
+
\begin{pmatrix}
\bSigma^{(0)}_{L_*,L_*} - \bSigma' & \hspace{0.1in} 0 \\
0 & \hspace{0.1in} 0
\end{pmatrix}.
\end{equation*}
Lemma~\ref{lemma:g1}(a), established later, states that both matrices on the RHS are p.s.d.\ (i.e., degenerate covariance matrices); the first has rank $D-d$ and the second has rank $d$ (assuming $\bSigma^{(0)} \in S_{++}(D)$).
The quantity $\kappa_1$ is thus the ratio between the smallest eigenvalue of the non-zero block in the second matrix (associated with $L_*$) and the largest eigenvalue of the bottom-right block in the first matrix (associated with $L_*^\perp$). Since the inliers and outliers are associated with $L_*$ and $L_*^\perp$ respectively, we view $\kappa_1$ as a geometric notion of SNR determined solely by $\bSigma^{(0)}$. Unlike the standard SNR, which counts inliers and outliers, $\kappa_1$ quantifies the spectral contributions of $\bSigma^{(0)}$ associated with these subspaces.

We note that the requirement for $\bSigma^{(0)}$ becomes more restrictive as the required value of $\kappa_1$ increases. In the extreme case where $\kappa_1 = \infty$, the definition implies that $\im(\bSigma^{(0)}) = L_*$, meaning the subspace is already recovered by the initialization.

In~\eqref{eq:def_kappa_1_2}, we define $\hat{\kappa}_1(\bSigma)$, a technical modification of $\kappa_1$ applicable to any iteration. The core of our proof involves showing that $\hat{\kappa}_1(\bSigma^{(k)})$ approaches infinity as $k \to \infty$ (see~\eqref{eq:overiterations}).

\subsection{Additional Quantities and the Main Condition}
\label{sec:additional_quantities}
We define and motivate additional quantities used in our theory, followed by the required lower bound on $\kappa_1$.

\textbf{The quantifier $\boldsymbol{\kappa_2}$}: We define the relative dominance ratio for $\bSigma^{(0)}$ with respect to $L_*^\perp$:
\begin{equation*}
\kappa_2 = \frac{\sigma_1\Big(\bSigma^{(0)}_{L_*^\perp,L_*^\perp}\Big)}{\sigma_D(\bSigma^{(0)})}.
\end{equation*}
This quantity is required to be bounded from above. We interpret it as an initialization-dependent ratio between the noise ($L_*^\perp$) and the total signal plus noise ($\mathbb{R}^D$). In the main proof, we define a similar quantity $\hat{\kappa}_2(\bSigma)$ for any matrix $\bSigma$ and show that it remains well-controlled at each iteration.

\textbf{Examples of calculating $\boldsymbol{\kappa_2}$:}
For the default initialization $\bSigma^{(0)}=\bI$, \eqref{eq:simple_example_I} implies $\kappa_2 = 1$.
For $\bSigma^{(0)}$ as defined in \eqref{eq:STE_init}, we derive a formula for $\kappa_2$ in Section~\ref{sec:calculate_kappa1}, which implies:
\begin{equation}
\label{eq:example_k1_second}
\text{if } \ \alpha \geq \sin^2\theta_1, \ \text{ then } \ \kappa_2 \leq 2.
\end{equation}

\textbf{The inliers' condition number $\boldsymbol{\kappa_{\text{in},*}}$}:
For noiseless inliers, we define
\begin{equation}
\label{eq:def_sigmainstar}
\bSigma_{\text{in},*} = \text{The TME solution in } S_{++}(d) \text{ for the set } \{\bU_{L_*}^\top\xbm \mid \xbm \in \calX_{\text{in}}\} \subset \mathbb{R}^d.
\end{equation}
The inliers' condition number is
\begin{equation} \label{eq:defkinstar}
\kappa_{\text{in},*} = \frac{\sigma_1(\bSigma_{\text{in},*})}{\sigma_d(\bSigma_{\text{in},*})}.
\end{equation}
This is analogous to (25) in \cite{maunu2019robust}, replacing the sample covariance with the TME estimator for the projected inliers. It quantifies the inverse permeance \cite{lerman15reaper,maunu19ggd,lerman2018overview}: a larger $\kappa_{\text{in},*}^{-1}$ indicates that inliers are more permeated and do not concentrate on a lower-dimensional subspace of $L_*$. While we do not specify an explicit bound on $\kappa_{\text{in},*}$, condition \eqref{eq:kappa1} implicitly requires $\kappa_{\text{in},*}$ to be finite for $\kappa_1$ to be finite.

\textbf{The alignment statistic {$\boldsymbol{\calA}$}}: This outlier-dependent quantity is a variant of the alignment statistic \cite{lerman15reaper,maunu19ggd}:
\begin{equation*}
\calA = \frac{D-d}{n_0} \cdot \left\|\sum_{\bx\in\calX_{\text{out}}} \frac{\bx\bx^\top}{\|\bU_{L_*^{\perp}}^\top\bx\|^2}\right\|.
\end{equation*}
We remark that the scaling factor $(D-d)/n_0$ was omitted in \cite{Yu2024}, but we find it natural as explained in Section~\ref{sec:normalize_A}. If the outliers align along a low-dimensional linear subspace (e.g., a line), $\calA$ is large; thus, bounding $\calA$ restricts the alignment of outliers \cite{maunu19ggd}.

Moreover, $\calA$ is large if the outliers are close to $L_*$. This property is specific to our formulation. If outliers concentrate on a $d$-subspace close to $L_*$, a more accurate initialization is required to distinguish the subspaces. Indeed, the lower bound for $\kappa_1$ depends quadratically on $\calA$.

\textbf{The relative alignment statistic {$\boldsymbol{\calR}$}}: We define the relative alignment statistic as:
\begin{equation*}
\calR = \frac{\sigma_1 \left(\sum_{\bx\in\calX_{\text{out}}}\frac{\bx\bx^\top}{\|\bU_{L_*^{\perp}}^\top\bx\|^2}\right)}{\bar{\sigma}_{\text{tail}}\left(\sum_{\bx\in\calX}\frac{\bx\bx^\top}{\|\bx\|^2}\right)},
\end{equation*}
where $\bar{\sigma}_{\text{tail}}$ is defined in \eqref{eq:last_eig_scaling}. The numerator is $\calA$ without the scaling factor. We show in Section~\ref{sec:R_bound} that $\calR \geq 1$, and in~\eqref{eq:calA_R} that for the asymptotic generalized haystack model, $\calR = O(1)$ when $D$ is fixed and $D > d+2$.
Similar to $\calA$, $\calR$ becomes large (specifically its numerator) if outliers cluster near $L_*$ or align on a low-dimensional subspace. Furthermore, $\calR$ can be large if inliers do not permeate $L_*$ sufficiently, causing the denominator to shrink.

\textbf{A key assumption:} Our main theorem requires the following initialization condition:
\begin{equation}\label{eq:kappa1}
\kappa_1 \geq C \, \frac{\kappa_{\text{in},*}\,\calA}{\dssnr}\left(\kappa_{\text{in},*} + \frac{\calA}{\dssnr-\gamma} + \frac{\kappa_2\calR}{\gamma}(1+\kappa_{\text{in},*})\right),
\end{equation}
for a sufficiently large constant $C \equiv C(\dssnr, \gamma)$ (see \eqref{eq:C_value}).
This condition implies that to keep the requirement on $\kappa_1$ moderate, one needs sufficient inlier permeation, restricted outlier alignment, and sufficient separation of outliers from $L_*$.
This bound serves our proof but is likely not sharp. 

The initialization $\bSigma^{(0)} = \bI$ leads to the Spherical PCA subspace in the first iteration. Similarly, initializing with the TME solution corresponds to starting with the TME subspace. More generally, if an RSR method produces a subspace $\hat{L}$ sufficiently close to $L_*$, one can construct the initialization $\bSigma^{(0)}$ via \eqref{eq:STE_init}. In this case, \eqref{eq:example_k1_first} implies that $\kappa_1$ is large—provided the parameter $\alpha$ is chosen appropriately—thereby satisfying the condition above.

\subsection{Generic Theory for the Noiseless Case}
\label{sec:theory_noiseless}
We show that under suitable assumptions on a noiseless inlier-outlier dataset, the STE subspace converges to the underlying subspace $L_*$. Significantly, this theory allows for $\dssnr < 1$, unlike Theorem~\ref{thm:zhang16}. We define the constants:
\begin{equation}\label{eq:C_value}
C = \max\left(70, \frac{46 \cdot \dssnr + 14\cdot \gamma}{\dssnr-\gamma}\right)
\quad \text{ and } \quad
C_0 = \frac{2 \cdot \dssnr}{\dssnr+\gamma}.
\end{equation}

\begin{theorem}\label{thm:main}
Assume that STE with fixed parameter $0 < \gamma < 1$ and initialization $\bSigma^{(0)}$ is applied to a noiseless inlier-outlier dataset satisfying $\dssnr > \gamma$. Assume further that \eqref{eq:kappa1} holds with the constant $C$ in \eqref{eq:C_value}.
Then the largest principal angle between $L_*$ and the subspace $L^{(k)}$ (spanned by the top $d$ eigenvectors of $\bSigma^{(k)}$) converges $r$-linearly to zero. Moreover, the convergence rate is of order $O(C_0^{-k})$; specifically, $\angle(L^{(k)},L_*) \leq c \cdot C_0^{-k/2}$ for some $c > 0$.
\end{theorem}

This implies that when $1 > \dssnr > \gamma$ and \eqref{eq:kappa1} holds, STE exactly recovers the subspace rapidly. We can further prove that the sequence $\bSigma^{(k)}$ converges to $\bU_{L_*}\bSigma_{\text{in},*}\bU_{L_*}^\top$, though this is not crucial for our main results.

\subsection{Generic Theory for a Noisy Model}
\label{sec:theory_noise}
We now analyze STE under a noisy inlier model. We assume the noise is constrained within a cone around the true subspace $L_*$ with aperture $\epsilon > 0$ (where $\epsilon \leq 1$). Specifically, we replace the noiseless assumption with:
\begin{equation}
\label{eq:noisy_inliers}
\forall \bx \in \calX_{\text{in}}, \quad \|P_{L_*^\perp} \bx\| \leq \epsilon \|P_{L_*} \bx\|.
\end{equation}
We assume \eqref{eq:kappa1} holds, potentially with a larger constant $C$.

\textbf{Preliminary definitions:} We adapt $\bSigma_{\text{in},*}$ to the noisy setting:
$$
\bSigma_{\text{in},*} = \text{The TME solution to the set of projected inliers } \{\bU_{L_*}^\top\xbm \mid \xbm \in \calX_{\text{in}}\} \subset \mathbb{R}^d.
$$
We define $T_{\text{in}}: \mathbb{R}^{d\times d} \to \mathbb{R}^{d\times d}$ as the TME iteration restricted to the projected inliers:
\begin{equation*}
T_{\text{in}}(\bSigma) = \frac{d}{|\calX_{\text{in}}|}\sum_{\bx\in\calX_{\text{in}}}\frac{(\bU_{L_*}^\top\xbm)(\bU_{L_*}^\top\xbm)^\top}{(\bU_{L_*}^\top\xbm)^\top\bSigma^{-1}(\bU_{L_*}^\top\xbm)}.
\end{equation*}

\textbf{The expansion constant $\boldsymbol{C_E}$:}
We claim the existence of $0 < C_E \leq 1$ such that for all $\bSigma \in S_{++}(d)$:
\begin{equation}
\label{eq:noisy_assumption1}
\frac{\sigma_d(\bSigma_{\text{in},*}^{-0.5}T_{\text{in}}(\bSigma)\,\bSigma_{\text{in},*}^{-0.5})}{\sigma_d(\bSigma_{\text{in},*}^{-0.5}\bSigma\,\bSigma_{\text{in},*}^{-0.5})} \geq 1 + C_E \cdot \left(1- \frac{\sigma_d(\bSigma_{\text{in},*}^{-0.5}\bSigma\,\bSigma_{\text{in},*}^{-0.5})}{\sigma_1(\bSigma_{\text{in},*}^{-0.5}\bSigma\,\bSigma_{\text{in},*}^{-0.5})}\right).
\end{equation}
$C_E$ quantifies the expansion of the smallest eigenvalue (which improves numerical conditioning). The existence of $C_E \ge 0$ follows from Lemma 2.1 of \cite{tyler1987distribution} or Theorem 4.4 in \cite{doi:10.1137/140978168}. The existence of a strictly positive $C_E$ is summarized below (proof in Section~\ref{sec:proof_lemmas}).

\begin{lemma}\label{lemma:TME_contraction}
If the TME solution $\bSigma_{\text{in},*}$ is nonsingular and the inliers are not contained in the union of two $(d-1)$-subspaces of $L_*$, then there exists $0 < C_E \leq 1$ such that \eqref{eq:noisy_assumption1} holds.
\end{lemma}

\textbf{Definition of $\boldsymbol{\kappa_3}$:} Using $\bSigma' = \bSigma^{(0)}_{L_*,L_*^\perp}\bSigma^{(0)\,-1}_{L_*^\perp,L_*^\perp}\bSigma^{(0)}_{L_*^\perp,L_*}$, we define:
\begin{equation*}
\kappa_3 := \frac{\sigma_1(\bSigma^{(0)}_{L_*,L_*})}{\sigma_d(\bSigma^{(0)}_{L_*,L_*} - \bSigma')}.
\end{equation*}
Like $\kappa_1$ and $\kappa_2$, this depends only on initialization. It acts as a data-independent analog of $\kappa_{\text{in},*}$. For $\bSigma^{(0)}=\bI$, $\kappa_3=1$. For the initialization in \eqref{eq:STE_init}, Section~\ref{sec:calculate_kappa1} shows that if $\sin^2\theta_1 \leq \epsilon$, then $\kappa_3 \leq 2$.

\textbf{Theorem Formulation:}  Our next result states that STE approximates $L_*$ with an error of order $O(\sqrt{\epsilon})$. The theorem relies on constants $C$, $C_{\kappa_1}$, and $C_{\kappa_3}$; we provide explicit formulas for these constants that ensure the validity of the result in the beginning of Section~\ref{sec:proof_thm_noisy}.
\begin{theorem}\label{thm:noisy}
Assume STE with $0 < \gamma < 1$ and initialization $\bSigma^{(0)}$ is applied to a noisy inlier-outlier dataset with $\dssnr > \gamma$ and noise parameter $\epsilon \leq 1/2$. Assume \eqref{eq:kappa1} holds for some $C$.
Then there exist $C_{\kappa_1} > 0$ and $C_{\kappa_3} > 0$ such that if $\kappa_2 \leq 7$, $\kappa_3 \leq C_{\kappa_3}/\kappa_{\text{in},*}$, and $k_0 \leq \log_{C_0}({C_{\kappa_1}}/{\kappa_1})$,
then after $k_0$ iterations, $\sin\angle({L}^{(k_0)}, L_*) \leq 2\sqrt{\kappa_{\text{in},*}/C_{\kappa_1}} = O(\sqrt{\epsilon}).$
\end{theorem}

\subsection{Motivation from the Theory of TME} \label{sec:analogy}
Our theoretical formulation is inspired by the proof of Theorem~\ref{thm:zhang16}, which decomposes the estimator at iteration $k$ into inlier and outlier components. Specifically, we can write $\bSigma^{(k)}$ as a weighted covariance matrix:
\begin{equation*}\bSigma^{(k)} \equiv \sum_{i=1}^N w_i^{(k)}\bx_i\bx_i^\top, \quad w_i^{(k)} = \frac{1}{\xbm_i^\top(\bSigma^{(k-1)})^{-1}\xbm_i}.
\end{equation*}
The proof of Theorem~\ref{thm:zhang16} relies on showing that the ratio of the minimum inlier weight to the maximum outlier weight tends to infinity as $k \to \infty$.
Similarly, the key idea of our proof for Theorem~\ref{thm:main} is to establish that $\kappa_1$ (recomputed at each iteration) diverges to infinity. Since STE is not geodesically convex, this requires non-trivial geometric estimates. For Theorem~\ref{thm:noisy}, we show that $\kappa_1$ reaches an order of $O(1/\epsilon)$.

\subsection{Implications for the Generalized Haystack Model}
\label{sec:haystack}
We now analyze our estimates under the asymptotic generalized haystack model and the initialization in \eqref{eq:STE_init}. In Section~\ref{sec:models}, we show that simpler expressions for $\calA$, $\calR$, and \eqref{eq:kappa1} exist in this setting. In Section~\ref{sec:models_theorem}, we demonstrate that under this model, STE initialized by TME can recover the subspace even when $\dssnr < 1$, a regime where TME alone fails.

The generalized haystack model is due to  \cite{maunu19ggd}. However, for simplicity, we assume Gaussian instead of sub-Gaussian distributions and focus on the asymptotic limit. We thus assume $n_1$ inliers i.i.d.~sampled from a Gaussian distribution $N(0,\bSigma^{(\text{in})}/d)$, where $\bSigma^{(\text{in})} \in S_+(D)$ and $L_*=\im(\bSigma^{(\text{in})})$ (so $\bSigma^{(\text{in})}$ has $d$ nonzero eigenvalues), and $n_0$ outliers  i.i.d.~sampled from a Gaussian distribution $N(0,\bSigma^{(\text{out})}/D)$, where  
$\bSigma^{(\text{out})}/D \in S_{++}(D)$. 
We define the following condition numbers of inliers (in $L_*$) and outliers:
\begin{equation}
\kappa_{\text{in}}={\sigma_1(\bSigma^{(\text{in})})}/{\sigma_d(\bSigma^{(\text{in})})}\,\,\ \ \text{ and } \ \kappa_{\text{out}}={\sigma_1(\bSigma^{(\text{out})})}/{\sigma_D(\bSigma^{(\text{out})})}\,.
\label{eq:kappa_in_out}
\end{equation}
Asymptotically, $\kappa_{\text{in}}$ and $\kappa_{\text{in},*}$ coincide, but for a finite sample they generally differ.

\subsubsection{Clarification of Key Quantities} \label{sec:models}
We analyze the RHS of \eqref{eq:kappa1} under the asymptotic generalized haystack model with $D-d > 2$, and assuming the initialization in \eqref{eq:STE_init} with $\alpha \geq \sin^2\theta_1$. We prove in Section \ref{sec:proof_example_haystack} that
\begin{equation}\label{eq:calA_R}
\lim_{n_0\to\infty}{\calA} \leq \sqrt{\kappa_{\text{out}}} \cdot \frac{D-d}{D-d-2}
\quad \text{ and } \quad
\lim_{n_0\to\infty}{\calR} \leq \sqrt{\kappa_{\text{out}}} \cdot \frac{D}{D-d-2}.
\end{equation}
Assuming $D$ is fixed, $\kappa_{\text{in},*}$ and $\kappa_{\text{out}}$ are $O(1)$. These observations and the expression for the constant $C$ in \eqref{eq:C_value} imply that the RHS of \eqref{eq:kappa1} has an asymptotic upper bound of order:
\begin{equation*}
O\left( \frac{1}{\dssnr} \left( 1 + \frac{1}{\dssnr-\gamma} + \frac{1}{\gamma} \right) \right).
\end{equation*}
Assuming $\gamma \in (c_* \dssnr, (1-c_*) \dssnr)$, this is of order $O((\dssnr)^{-2})$.
Since $\kappa_1 \geq 1/(4\alpha)$, condition \eqref{eq:kappa1} is satisfied if $1/\alpha \geq O((\dssnr)^{-2})$, or equivalently, $\dssnr = o(\alpha^{1/2})$. If $\alpha = \Theta(\sin^2\theta_1)$, a sufficient condition for \eqref{eq:kappa1} is $$\dssnr = o(\theta_1).$$

\subsubsection{The effectiveness of the TME initialization}
\label{sec:models_theorem}
We study the procedure where STE is initialized by TME, referring to this combination as TME+STE. We formulate the theory for the asymptotic case ($N \to \infty$) so that the results hold almost surely. It is possible to formulate it for a very large $N$ with high probability, but it requires stating complicated constants depending on various parameters.  
\begin{theorem}\label{thm:haystack}
Consider a sequence of datasets of size $N$, indexed by $N$, generated from the generalized haystack model, where the dimensions satisfy $d \leq (1-\lambda)D - 2$ for a fixed constant $\lambda > 0$.
For any constant $0 < c_0 < 1$, there exists a threshold $\eta := \eta(\kappa_{\text{in}}, \kappa_{\text{out}}, c_0, \lambda) < 1$ such that if $\dssnr \geq \eta$ and $\bSigma^{(0)}$ is the TME solution, then as $N \to \infty$, condition \eqref{eq:kappa1} holds with $c_0 < \gamma < \eta - c_0$ almost surely.
Consequently, TME+STE recovers $L_*$ almost surely as $N \to \infty$. In contrast, if $\bSigma^{(\text{out})}_{L_*,L_*^\perp} \neq 0$ and $\dssnr < 1$, TME fails to recover $L_*$ almost surely.
\end{theorem}
This theorem delineates three distinct regimes for robust subspace recovery in the current asymptotic setting:
\begin{itemize}
    \item \textbf{Easy Regime ($\dssnr \geq 1$):} Both TME and TME+STE solve the noiseless RSR problem.
    \item \textbf{Intermediate Regime ($\eta \leq \dssnr < 1$):} TME+STE succeeds in recovering $L_*$, whereas TME generally fails. This highlights the benefit of the subspace constraint.
    \item \textbf{Hard Regime ($\dssnr < \eta$):} TME+STE is not guaranteed to succeed. However, STE may still recover the subspace if provided with a different initialization that satisfies condition~\eqref{eq:kappa1} (i.e., one that yields a higher $\kappa_1$ than TME).
\end{itemize}

To interpret the stability of this result, we note that according to our proof, $\eta \to 1$ as $c_0 \to 0$, $\kappa_{\text{in}} \to \infty$, $\kappa_{\text{out}} \to \infty$, or $\lambda \to 0$.
Consequently, when the shrinkage parameter $\gamma$ is chosen near the boundaries (close to $0$ or $\eta$), or when the inlier/outlier distributions are highly anisotropic (i.e., large $\kappa_{\text{in}}$ or $\kappa_{\text{out}}$), the effective range where TME+STE succeeds shrinks, requiring $\dssnr$ to be closer to $1$.
We also remark that the restriction involving $\lambda$ appears to be an artifact of the proof technique rather than a fundamental limitation of the algorithm.

 \section{Proofs of the Main Results}
\label{sec:supp_theory}
We prove the stated theoretical statements and provide novel theoretical foundations for understanding the STE framework, where some details are left to the supplementary material. In particular, whenever a proof of a lemma is straightforward, we defer it to the supplementary material.

\subsection{Proof of Theorem~\ref{thm:main}}
\label{proof:main_noiseless}
The proof is organized as follows: 
Section~\ref{sec:notation_thm1} lists definitions and notation; 
Section~\ref{sec:tech_lemma_thm1} presents several technical estimates used throughout the proof; 
Section~\ref{sec:reduce_noiseless-thm} reduces the proof to verification of a technical proposition; and 
Section~\ref{sec:prove_reduce_noiseless-thm} proves this reduction.

\subsubsection{Notation and Definitions}
\label{sec:notation_thm1}
We denote by $\psdleq$ the Loewner order: For $\bX$, $\bX' \in \R^{D \times D}$,  $\bX\psdleq\bX'$ if and only if $\bX'-\bX$ is p.s.d. We denote by  
$\sigma_i(\cdot)$ the $i$-th eigenvalue of a corresponding matrix. We also use $\bu_i(\cdot)$ to denote the $i$-th eigenvector of a corresponding matrix. When the matrix is clear from the context, we may just write $\sigma_i$ and $\bu_i$.

Let $\Pi_d(\bSigma)$ denote the projector of $\bSigma$ onto the nearest rank-$d$ matrix:  $$\Pi_d(\bSigma)=\sum_{i=1}^d\bu_i(\bSigma)\sigma_i(\bSigma)\bu_i(\bSigma)^\top$$ 
and $\bP_{\text{tail}}$ denote the projector onto the span of the  smallest $D-d$ eigenvectors of $\bSigma$:
$$\bP_{\text{tail}}(\bSigma)=\sum_{i=d+1}^D\bu_i(\bSigma)\bu_i(\bSigma)^\top.$$  
We use boldface notation for the operator $\bP$, since we identify it with its matrix representation. On the other hand, $\Pi$ is not a linear operator. 
We recall our notation $\bar{\sigma}_{\text{tail}}(\bSigma)=\frac{1}{D-d}\sum_{i=d+1}^D\sigma_i(\bSigma)$ for the average of the smallest $D-d$ eigenvalues of $\bSigma$. 
We further denote
$$\Pi_{\text{tail}} (\bSigma)=\sum_{i=d+1}^D\bu_i(\bSigma)\sigma_i(\bSigma)\bu_i(\bSigma)^\top.$$ 

Throughout the rest of the paper we propose definitions for $\bSigma \in S_{+}$, where these definition involve expressions with $\bSigma^{-1}$ or $\bSigma_{L_*^\perp,L_*^\perp}^{-1}$, which may not exist. In such cases, one should replace $\bSigma^{-1}$  with $(\bSigma + \epsilon_0 \bI)^{-1}$ and take its limit as $\epsilon_0 \to 0$. We similarly replace $\bSigma_{L_*^\perp,L_*^\perp}^{-1}$. For example, if $\bSigma$ is singular with range $L$, then 
$$
\frac{\xbm\xbm^\top}{\xbm^\top\bSigma^{-1}\xbm} = \begin{cases}
 \frac{\xbm\xbm^\top}{\xbm^\top\bU_L(\bU_L^\top\bSigma\bU_L)^{-1}\bU_L^\top\xbm},\,\,&\text{if $\xbm\in L$;}\\
 0,\,\,&\text{if $\xbm\not\in L$}.
\end{cases}
$$

We define the following operators from $S_{+}$ to $S_+$:
\begin{align*}
    &T_1(\bSigma)  = \sum_{\xbm\in\Xcal}\frac{\xbm\xbm^\top}{\xbm^\top\bSigma^{-1}\xbm}, \quad 
    T_2(\bSigma;\gamma)  = \Pi_d(\bSigma)+\gamma  \bar{\sigma}_{\text{tail}}(\bSigma)  \bP_{\text{tail}}(\bSigma), \ \text{ and} \\  
&T(\bSigma)=T_2(T_1(\bSigma),\gamma). 
\end{align*}
The STE Algorithm is a fixed-point iteration of $T$ up to a scaling factor (see \eqref{eq:ste_prelim_iteration}-\eqref{eq:ste_iteration}). Since we only care about the span of the top $d$ eigenvectors of $T$, which is invariant to scaling, we can ignore the scaling factor in our analysis.

We further define 
\begin{equation}
    \label{eq:Sigma+}
\bSigma_{+,\text{in}}=\sum_{\xbm\in\Xcal_{\text{in}}}\frac{\xbm\xbm^\top}{\xbm^\top\bSigma^{-1}\xbm},
\quad
\bSigma_{+,\text{out}}=\sum_{\xbm\in\Xcal_{\text{out}}}\frac{\xbm\xbm^\top}{\xbm^\top\bSigma^{-1}\xbm}
\quad \text{and} \quad
\bSigma_+=\bSigma_{+,\text{in}}+\bSigma_{+,\text{out}}.
\end{equation}
We note that
\begin{equation}\label{eq:sigmaplus}\bSigma_+=T_1(\bSigma) \ \text{ and } \ T(\bSigma)=T_2(\bSigma_+,\gamma).\end{equation}

We also define $g_2: S_+\rightarrow S_+$ and $g_1: S_+\rightarrow S_+$ by
\begin{equation}
\label{eq:defg2}g_2(\bSigma)=\begin{pmatrix}
\bSigma_{L_*,L_*^\perp}\bSigma^{-1}_{L_*^\perp,L_*^\perp}\bSigma_{L_*^\perp,L_*} & \hspace{0.05in} \bSigma_{L_*,L_*^\perp}  \\
\bSigma_{L_*^\perp,L_*} & \hspace{0.05in} \bSigma_{L_*^\perp,L_*^\perp} 
\end{pmatrix}\equiv \begin{pmatrix}
 \bSigma_{L_*,L_*^\perp} \\
 \bSigma_{L_*^\perp,L_*^\perp} 
\end{pmatrix}\Big(\bSigma^{-1}_{L_*^\perp,L_*^\perp}\bSigma_{L_*^\perp,L_*},\bI\Big),\hspace{0.2in}
\end{equation}
\begin{equation}
    \label{eq:defg1}
g_1(\bSigma)=\bSigma-g_2(\bSigma)=\begin{pmatrix}
\bSigma_{L_*,L_*}-\bSigma_{L_*,L_*^\perp}\bSigma^{-1}_{L_*^\perp,L_*^\perp}\bSigma_{L_*^\perp,L_*} & \hspace{0.05in} 0 \\
0 & \hspace{0.05in} 0
\end{pmatrix}.
\end{equation}

The reader may remember these notations as follows: $g_1(\bSigma)$ is a matrix whose top left block is the Schur complement of $\bSigma_{L_*^\perp,L_*^\perp}$ in $\bSigma$  and the rest of its blocks are zero, whereas $g_2(\bSigma)=\bSigma-g_1(\bSigma)$. 
Recalling our above convention, we note, for example, that  
\[
\text{if } \ \bSigma=\begin{pmatrix}
 \bSigma_{L_*,L_*} & 0\\ 
0& 0 
\end{pmatrix}, \ \text{ then } \ g_1(\bSigma)=\bSigma \ \text{ and }
\ g_2(\bSigma)=0.
\]

We recall the definition of  $\bSigma_{\text{in},*}$ in \eqref{eq:def_sigmainstar}, and define
\begin{equation}
\label{eq:def_f}
h(\bSigma)=\sigma_d(\bSigma_{\text{in},*}^{-0.5}[g_1(\bSigma)]_{L_*,L_*}\bSigma_{\text{in},*}^{-0.5}).    
\end{equation}

For any $\bSigma \in S_{+}$, we define
\begin{equation}
\label{eq:def_kappa_1_2}
\hat{\kappa}_1(\bSigma)={h(\bSigma)}/{\sigma_1(\bSigma_{L_*^{\perp},L_*^{\perp}})}
\ \ \text{ and } \ \
\hat{\kappa}_2(\bSigma)={\sigma_1(\bSigma_{L_*^{\perp},L_*^{\perp}})}/{\sigma_D(\bSigma)},
\end{equation}
where $\hat{\kappa}_1$ 
and $\hat{\kappa}_2$ can obtain infinite values. 
We note that 
$
\kappa_2=\hat{\kappa}_2(\bSigma^{(0)})$. Furthermore, since
$\kappa_1={\sigma_d(g_1(\bSigma^{(0)}))}/{\sigma_1(\bSigma^{(0)}_{L_*^\perp,L_*^\perp})}$ and
$\hat{\kappa}_1(\bSigma^{(0)})={\sigma_d(\bSigma_{\text{in},*}^{-0.5}g_1(\bSigma^{(0)})\bSigma_{\text{in},*}^{-0.5})}/{\sigma_1(\bSigma^{(0)}_{L_*^\perp,L_*^\perp})}$, \begin{equation}\sigma_d^{-1}(\bSigma_{\text{in},*}){\kappa}_1\geq \hat{\kappa}_1(\bSigma^{(0)})\geq \sigma_1^{-1}(\bSigma_{\text{in},*}){\kappa}_1\label{eq:hatkappa1_relation}.\end{equation}
Another spectral dominance ratio, which we use for $\bSigma \in S_+$, is 
\begin{equation}
\label{eq:def_nu}
\nu \equiv \nu(\bSigma)=\sigma_d(\bSigma_{+,\text{in}})/\|\bSigma_{+,\text{out}}\|.     
\end{equation}

\subsubsection{Auxiliary Propositions}
\label{sec:tech_lemma_thm1}
We formulate and prove several results that we will use in the main proof.  
The first lemma provides some basic properties of $g_1(\bSigma)$ and $g_2(\bSigma)$. 
\begin{lemma}
\label{lemma:g1}
The following properties hold:\\
(a) If $\bSigma \in S_+$, then $g_1(\bSigma) \in S_+$,  $g_2(\bSigma) \in S_+$, and consequently, $g_1(\bSigma)\psdleq \bSigma$. Moreover, 
$\rank(g_1(\bSigma))\leq d$ and $\rank(g_2(\bSigma))\leq D-d$. If $\,\bSigma \in S_{++}$, then $\rank(g_1(\bSigma))= d$ and $\rank(g_2(\bSigma))= D-d$.
Therefore,  $\sigma_d(g_1(\bSigma))>0$ and ${\kappa}_1>0$ if $\bSigma^{(0)} \in S_{++}$.\\ 
(b) If $\bX$, $\bSigma \in S_+$ and the range of $\bSigma-\bX$ is  contained in $L_*$, then $g_1(\bSigma)\psdgeq\bSigma-\bX$. 
\end{lemma}

The next lemma provides matrix bounds that will be used in our analysis. 
\begin{lemma}\label{lemma:xplus}
The following  hold: 
\begin{align}\label{eq:xin}
h(\bSigma_{+,\text{in}})\geq \frac{n_1}{d}h(\bSigma),
\end{align}
\begin{align}\label{eq:xout1}
\bSigma_{+,\text{out}}\psdleq \sigma_1(\bSigma_{L_*^{\perp},L_*^{\perp}})\sum_{\bx\in\calX_{\text{out}}}\frac{\bx\bx^\top}{\|\bU_{L_*^{\perp}}^\top\bx\|^2},
\end{align}
and consequently,
\begin{align}\label{eq:xout2}\|\bSigma_{+,\text{out}}\|\leq \tilde{\calA}\sigma_1(\bSigma_{L_*^{\perp},L_*^{\perp}}) \ \text{ and } \ \tr([\bSigma_{+,\text{out}}]_{L_*^{\perp},L_*^{\perp}})\leq  {n_0}\sigma_1(\bSigma_{L_*^{\perp},L_*^{\perp}}). \end{align}
Furthermore,
\begin{align}\label{eq:xout3}
\bSigma_{+}\psdgeq\sigma_D(\bSigma)\sum_{\bx\in\calX}\frac{\bx\bx^\top}{\|\bx\|^2}.
\end{align}
\end{lemma}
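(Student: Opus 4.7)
All three bounds rest on controlling the quadratic form $\xbm^\top\bSigma^{-1}\xbm$ appearing in the denominators of $\bSigma_{+,in}$ and $\bSigma_{+,out}$. The plan is to prove (a) via a Schur-complement reduction to $L_*$ combined with the TME fixed-point equation for the projected inliers, while (b) and (c) follow, respectively, from the variational formula $\xbm^\top\bSigma^{-1}\xbm=\max_{\wbm\neq\mathbf{0}}(\wbm^\top\xbm)^2/(\wbm^\top\bSigma\wbm)$ and from the elementary operator bound $\bSigma^{-1}\psdleq\sigma_D(\bSigma)^{-1}\bI$.

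For (a), since each inlier satisfies $\xbm_i\in L_*$, I write $\xbm_i=\bU_{L_*}\ybm_i$ with $\ybm_i:=\bU_{L_*}^\top\xbm_i$. The classical Schur-complement identity gives $[\bSigma^{-1}]_{L_*,L_*}=(g_1(\bSigma)_{L_*,L_*})^{-1}$, so with $\bM:=g_1(\bSigma)_{L_*,L_*}$ we obtain $\xbm_i^\top\bSigma^{-1}\xbm_i=\ybm_i^\top\bM^{-1}\ybm_i$ and therefore $\bU_{L_*}^\top\bSigma_{+,in}\bU_{L_*}=\sum_i\ybm_i\ybm_i^\top/(\ybm_i^\top\bM^{-1}\ybm_i)$. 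Since $\bSigma_{+,in}$ is supported on $L_*$, we also have $[g_1(\bSigma_{+,in})]_{L_*,L_*}=\bU_{L_*}^\top\bSigma_{+,in}\bU_{L_*}$. Introducing the whitened vectors $\zbm_i:=\bSigma_{in,*}^{-1/2}\ybm_i$ and the matrix $\Nbf:=\bSigma_{in,*}^{-1/2}\bM\bSigma_{in,*}^{-1/2}$, the TME fixed-point equation $\bSigma_{in,*}=(d/n_1)\sum_i\ybm_i\ybm_i^\top/(\ybm_i^\top\bSigma_{in,*}^{-1}\ybm_i)$ yields $\sum_i\zbm_i\zbm_i^\top/\|\zbm_i\|^2=(n_1/d)\bI$. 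Since $\zbm_i^\top\Nbf^{-1}\zbm_i\leq\|\zbm_i\|^2/\sigma_d(\Nbf)$, any unit $\vbm\in\R^d$ satisfies
\[
\vbm^\top\left[\sum_i\frac{\zbm_i\zbm_i^\top}{\zbm_i^\top\Nbf^{-1}\zbm_i}\right]\vbm\geq\sigma_d(\Nbf)\,\vbm^\top\left[\sum_i\frac{\zbm_i\zbm_i^\top}{\|\zbm_i\|^2}\right]\vbm=\frac{n_1}{d}\sigma_d(\Nbf),
\]
which, upon taking the infimum over $\vbm$, is exactly \eqref{eq:xin}.

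For (b), plugging $\wbm:=\bU_{L_*^\perp}\bU_{L_*^\perp}^\top\xbm$ into the variational formula gives $(\wbm^\top\xbm)^2=\|\bU_{L_*^\perp}\xbm\|^4$ and $\wbm^\top\bSigma\wbm\leq\sigma_1(\bSigma_{L_*^\perp,L_*^\perp})\|\bU_{L_*^\perp}\xbm\|^2$, so $\xbm^\top\bSigma^{-1}\xbm\geq\|\bU_{L_*^\perp}\xbm\|^2/\sigma_1(\bSigma_{L_*^\perp,L_*^\perp})$. Rearranging and summing over $\calX_{out}$ yields \eqref{eq:xout1}; taking the operator norm of both sides and invoking the definition of $\calA$ gives the first bound of \eqref{eq:xout2}, while taking the trace of the $(L_*^\perp,L_*^\perp)$ block of the RHS (each summand contributes exactly $1$) gives the second. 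For (c), $\bSigma^{-1}\psdleq\sigma_D(\bSigma)^{-1}\bI$ yields $\xbm^\top\bSigma^{-1}\xbm\leq\|\xbm\|^2/\sigma_D(\bSigma)$ and hence $\xbm\xbm^\top/(\xbm^\top\bSigma^{-1}\xbm)\psdgeq\sigma_D(\bSigma)\xbm\xbm^\top/\|\xbm\|^2$; summing over $\calX_{out}$ and over $\calX$ produces the two inequalities of \eqref{eq:xout3}. The main obstacle is part (a): one must recognize the Schur complement $(g_1(\bSigma)_{L_*,L_*})^{-1}$ (rather than $\bSigma_{L_*,L_*}^{-1}$) as the correct $(L_*,L_*)$-block of $\bSigma^{-1}$, and then see that the whitening built into the TME fixed-point equation for the projected inliers produces exactly the prefactor $n_1/d$ required.
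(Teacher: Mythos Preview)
Your proof is correct and reaches the same conclusions as the paper, but two of the three parts use slightly different devices that are worth contrasting.

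For part (a), both you and the paper reduce to showing $\bSigma_{in,*}^{-1/2}[\bSigma_{+,in}]_{L_*,L_*}\bSigma_{in,*}^{-1/2}\psdgeq(n_1/d)h(\bSigma)\,\bI$ and then invoke the TME fixed-point equation \eqref{eq:TME_equality}. The difference is in how the reduction to $L_*$ is carried out. You use the exact block-inversion identity $[\bSigma^{-1}]_{L_*,L_*}=\big(g_1(\bSigma)_{L_*,L_*}\big)^{-1}$, so that for inliers $\xbm^\top\bSigma^{-1}\xbm=\ybm^\top\bM^{-1}\ybm$ holds with equality. The paper instead writes $\xbm^\top\bSigma^{-1}\xbm=\xbm^\top\bM(\bM\bSigma\bM)^{-1}\bM\xbm$ and then invokes the operator inequality $g_1(\bSigma)\psdleq\bSigma$ to replace $\bSigma$ by $g_1(\bSigma)$ before simplifying. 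Your route is more direct: the inequality step in the paper is in fact an equality for inliers, so nothing is lost either way, but the Schur-complement identity makes this transparent from the outset.

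For part (b), you bound the denominator via the variational formula $\xbm^\top\bSigma^{-1}\xbm=\max_{\wbm}(\wbm^\top\xbm)^2/(\wbm^\top\bSigma\wbm)$ with the test vector $\wbm=\bU_{L_*^\perp}\bU_{L_*^\perp}^\top\xbm$, whereas the paper obtains the same bound by a limiting argument, sending $t\to\infty$ in $(t\,\bP_{L_*}+\bSigma)^{-1}$ to isolate the $L_*^\perp$ block. Both yield $\xbm^\top\bSigma^{-1}\xbm\geq\|\bU_{L_*^\perp}^\top\xbm\|^2/\sigma_1(\bSigma_{L_*^\perp,L_*^\perp})$; the variational argument is a one-line Cauchy--Schwarz, while the limit argument actually gives the slightly sharper intermediate bound $\xbm^\top\bSigma^{-1}\xbm\geq(\bU_{L_*^\perp}^\top\xbm)^\top\bSigma_{L_*^\perp,L_*^\perp}^{-1}(\bU_{L_*^\perp}^\top\xbm)$ before relaxing it. Part (c) is identical in both proofs.
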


\begin{proof}[Proof of Lemma~\ref{lemma:xplus}:]
\label{sec:proof_lemma_3} 
We prove it in four parts according to the four equations.\\ 
a) Proof of \eqref{eq:xin}. The definition of $g_1$ implies that $[g_1(\bSigma_{+,\text{in}})]_{L_*,L_*}=[\bSigma_{+,\text{in}}]_{L_*,L_*}$ and thus   $h(\bSigma_{+,\text{in}})=\sigma_d(\bSigma_{\text{in},*}^{-0.5}[g_1(\bSigma_{+,\text{in}})]_{L_*,L_*}\bSigma_{\text{in},*}^{-0.5})=\sigma_d(\bSigma_{\text{in},*}^{-0.5}[\bSigma_{+,\text{in}}]_{L_*,L_*}\bSigma_{\text{in},*}^{-0.5})$. 
Therefore, to conclude \eqref{eq:xin} we will prove that $\bSigma_{\text{in},*}^{-0.5}[\bSigma_{+,\text{in}}]_{L_*,L_*}\bSigma_{\text{in},*}^{-0.5}
\psdgeq h(\bSigma)\frac{n_1}{d}\bI$. 

Recall that  $\bSigma_{+,\text{in}}=\sum_{\xbm\in\Xcal_{\text{in}}}{\xbm\xbm^\top}/{\xbm^\top\bSigma^{-1}\xbm}$, and denote  
$\tilde{\calX}_{\text{in}}=\{\bU_{L_*}^\top\bx:\bx\in\calX_{\text{in}}\}\subset\reals^d$. We derive the following relationships, which we clarify below:  
\begin{align*}
&\bSigma_{\text{in},*}^{-0.5}[\bSigma_{+,\text{in}}]_{L_*,L_*}\bSigma_{\text{in},*}^{-0.5}=\sum_{\xbm\in\Xcal_{\text{in}}}\frac{\bSigma_{\text{in},*}^{-0.5}[\xbm\xbm^\top]_{L_*,L_*}\bSigma_{\text{in},*}^{-0.5}}{\xbm^\top(\bSigma)^{-1}\xbm}\\
\nonumber
\psdgeq& \sum_{\xbm\in\Xcal_{\text{in}}}\!\frac{\bSigma_{\text{in},*}^{-0.5}[\xbm\xbm^\top]_{L_*,L_*}\bSigma_{\text{in},*}^{-0.5}}{\xbm^\top(g_1(\bSigma))^{-1}\xbm}
=\sum_{\tilde{\xbm}\in\tilde{\Xcal}_{\text{in}}}\frac{\bSigma_{\text{in},*}^{-0.5}\tilde{\xbm}\tilde{\xbm}^\top\bSigma_{\text{in},*}^{-0.5}}{\tilde{\xbm}^\top\bSigma_{\text{in},*}^{-0.5}(\bSigma_{\text{in},*}^{-0.5}[ g_1(\bSigma)]_{L_*,L_*}\bSigma_{\text{in},*}^{-0.5})^{-1}\bSigma_{\text{in},*}^{-0.5}\tilde{\xbm}}
\\\psdgeq& h(\bSigma)\sum_{\tilde{\xbm}\in\tilde{\Xcal}_{\text{in}}}\frac{\bSigma_{\text{in},*}^{-0.5}\tilde{\xbm}\tilde{\xbm}^\top\bSigma_{\text{in},*}^{-0.5}}{\tilde{\xbm}^\top\bSigma_{\text{in},*}^{-1}\tilde{\xbm}}=h(\bSigma)\frac{n_1}{d}\bI. \nonumber
\end{align*}
To verify the first Loewner order, we recall that  $g_1(\bSigma)\psdleq \bSigma$ by Lemma \ref{lemma:g1}(a) and thus obtain that   $\bSigma_{\text{in},*}^{-0.5}g_1(\bSigma)\bSigma_{\text{in},*}^{-0.5}\psdleq \bSigma_{\text{in},*}^{-0.5}\bSigma\bSigma_{\text{in},*}^{-0.5}$, which leads to $(\bSigma_{\text{in},*}^{-0.5}g_1(\bSigma)\bSigma_{\text{in},*}^{-0.5})^{-1}\psdgeq (\bSigma_{\text{in},*}^{-0.5}\bSigma\bSigma_{\text{in},*}^{-0.5})^{-1}$ and consequently $\bx^\top(\bSigma_{\text{in},*}^{-0.5}g_1(\bSigma)\bSigma_{\text{in},*}^{-0.5})^{-1}\bx\geq \bx^\top(\bSigma_{\text{in},*}^{-0.5}\bSigma\bSigma_{\text{in},*}^{-0.5})^{-1}\bx$. 
The second equality follows from our convention of taking the inverse of a singular matrix and the facts that $g_1(\bSigma)$ is a singular matrix with range $L_*$, and $\bx\in\calX_{\text{in}}$ lie in $L_*$. 
The second Loewner order  follows from  $\bSigma_{\text{in},*}^{-0.5}[g_1(\bSigma)]_{L_*,L_*}\bSigma_{\text{in},*}^{-0.5}\psdgeq h(\bSigma)\bI$, which is a direct consequence of the definition $h(\bSigma) := \sigma_d(\bSigma_{\text{in},*}^{-0.5}[g_1(\bSigma)]_{L_*,L_*}\bSigma_{\text{in},*}^{-0.5})$. 
To derive the  last equality we first note that \eqref{eq:kappa1} implies the finiteness of  $\kappa_{\text{in},*}$ 
and consequently $\bSigma_{\text{in},*}$ is well-defined as a TME solution to $\tilde{\calX}_{\text{in}}$. 
We then use \eqref{eq:TME_definition}
for our setting with the TME solution $\bSigma_{\text{in},*}$ and dataset $\tilde{\calX}_{\text{in}}$ of $n_1$ points in a subspace of ambient dimension $d$. 

(b) Proof of \eqref{eq:xout1}. We first present the main arguments of the proof and then justify each step below:
\begin{align*}
\bSigma_{+,\text{out}}&=\sum_{\xbm\in\Xcal_{\text{out}}}\frac{\xbm\xbm^\top}{\xbm^\top\bSigma^{-1}\xbm}\psdleq \lim_{t\rightarrow\infty} \sum_{\xbm\in\Xcal_{\text{out}}}\frac{\xbm\xbm^\top}{\xbm^\top(t\bP_{L_*}+\bSigma)^{-1}\xbm} =\sum_{\xbm\in\Xcal_{\text{out}}}\frac{\xbm\xbm^\top}{\bx^\top\bU_{L_*^\perp}\bSigma_{L_*^\perp,L_*^\perp}^{-1}\bU_{L_*^\perp}^\top\bx}\\
\nonumber
\psdleq & 
\sum_{\xbm\in\Xcal_{\text{out}}}\frac{\xbm\xbm^\top}{\bx^\top\bU_{L_*^\perp}\frac{\bI}{\sigma_1(\bSigma_{L_*^{\perp},L_*^{\perp}})}\bU_{L_*^\perp}^\top\bx}
=\sigma_1(\bSigma_{L_*^{\perp},L_*^{\perp}})\sum_{\xbm\in\Xcal_{\text{out}}}\frac{\xbm\xbm^\top}{\|\bU_{L_*^\perp}^\top\bx\|^2}.
\end{align*}
The first equality follows from the definition of $\bSigma_{+,\text{out}}$.  The first Loewner order follows from the fact that $t\bP_{L_*}+\bSigma\psdgeq \bSigma$ for $t>0$ and thus $\bx^\top\bSigma^{-1}\bx\geq \bx^\top(t\bP_{L_*}+\bSigma)^{-1}\bx$.
The second equality follows from observing that $\lim_{t\rightarrow\infty}(t\bP_{L_*}+\bSigma)^{-1}=\bU_{L_*^\perp}\bSigma_{L_*^\perp,L_*^\perp}^{-1}\bU_{L_*^\perp}^\top$. Indeed, by letting $t \to \infty$, the three blocks  associated with $(L_*,L_*)$, $(L_*,L_*^\perp)$ and $(L_*^\perp,L_*)$ zero out when taking the inverse, where a rigorous proof can be obtained by using the formula for the inverse of a block matrix (see e.g., (2.2) of \cite{LU2002119}). 

(c) Proof of \eqref{eq:xout2}. We  conclude it by applying the norm and trace to both sides of \eqref{eq:xout1}.

(d) Proof of \eqref{eq:xout3}. Using the fact  $\bSigma\psdgeq \sigma_D(\bSigma)\bI$, 
\begin{align}\nonumber
&\bSigma_{+}=\sum_{\xbm\in\Xcal}\frac{\xbm\xbm^\top}{\xbm^\top\bSigma^{-1}\xbm}\psdgeq \sum_{\xbm\in\Xcal}\frac{\xbm\xbm^\top}{\xbm^\top(\sigma_D(\bSigma)\bI)^{-1}\xbm} =\sigma_D(\bSigma)\sum_{\bx\in\calX}\frac{\bx\bx^\top}{\|\bx\|^2}.
\qedhere
\end{align}
\end{proof}

The next lemma summarizes two basic and well-known properties of matrices. 
\begin{lemma}\label{lemma:matrixperturbation}
The following two properties hold:\\
(a) For any semi-orthogonal matrix $\bU_0\in O(D,k)$ and symmetric matrix $\bSigma \in \R^{D \times D}$,
\[
\sigma_1(\bSigma)\geq \sigma_1(\bU_0^\top\bSigma\bU_0)\geq \sigma_{D-k+1}(\bSigma),\hspace{0.2in} \hspace{0.2in} \sigma_{k}(\bSigma)\geq \sigma_k(\bU_0^\top\bSigma\bU_0)\geq \sigma_{D}(\bSigma).
\]
(b) For any two symmetric matrices $\bA, \bB \in \R^{D\times D}$, $|\sigma_i(\bA)-\sigma_i(\bB)| \leq \|\bA-\bB\|$, \ $1 \leq i \leq D$.
\end{lemma}

The next lemma  is an alternative to the Davis-Kahan theorem \cite{a6ee9c48-1e5b-385a-93c1-cdf3b873de37} or the Wedin's theorem \cite{Wedin1972} in our setting of p.d.~matrices $\bSigma$.  Using the Davis–Kahan sin–$\theta$ theore with the formulation and notation from \cite[Theorem VII.3.1]{bhatia1996matrix}, we have
\[
\sin\angle(\hat{L},L_*) \leq \frac{\sigma_1(\bSigma-\bSigma_{L_*,L_*})}{\min\left(\sigma_d\left(\bSigma_{L_*,L_*}\right),\sigma_d(\bSigma)\right)}\leq \frac{\sigma_1(\bSigma-\bSigma_{L_*,L_*})}{\sigma_d\left(\bSigma_{L_*,L_*}\right)-\sigma_1(\bSigma-\bSigma_{L_*,L_*})}, 
\]
which is small if $\bSigma-\bSigma_{L_*,L_*}$ is small. However, the bound in the next lemma replaces 
$\bSigma-\bSigma_{L_*,L_*}$
with $\bSigma_{L_*^\perp,L_*^\perp}$, which is useful for our purposes. On the other hand, this bound requires a square root, unlike the above bound.

\begin{lemma}\label{lemma:subspace}
If $\bSigma\in S_{+}(D)$  and 
$\hat{L}$ is the span of the top $d$ eigenvectors of $\bSigma$, then  
$$\sin\angle(\hat{L},L_*)\leq 2 \sqrt{\sigma_1\left(\bSigma_{L_*^\perp,L_*^\perp}\right)/\sigma_d\left(\bSigma_{L_*,L_*}\right)}.$$
\end{lemma}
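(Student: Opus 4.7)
The plan is to reduce $\sin\angle(\hat L,L_*)$ to the operator norm of the off-diagonal block $\bP':=\bU_{L_*^\perp}^\top\bU_{\hat L}$, and then to extract that norm from a block PSD lower bound on $\bSigma_{L_*^\perp,L_*^\perp}$ that uses the eigenspace structure of $\bSigma$. First, set $\bM:=\bU_{\hat L}^\top\bP_{L_*}\bU_{\hat L}\in S_+(d)$; since $\bU_{\hat L}^\top\bP_{L_*^\perp}\bU_{\hat L}=\bI_d-\bM$, the eigenvalues of the two blocks are complementary, so
\[
\|\bP'\|^2=\|\bU_{\hat L}^\top\bP_{L_*^\perp}\bU_{\hat L}\|=1-\sigma_d(\bM)\geq 1-\sigma_1(\bM)=1-\|\bU_{\hat L}^\top\bU_{L_*}\|^2=\sin^2\angle(\hat L,L_*).
\]
Hence it is enough to bound $\|\bP'\|$ by the claimed right-hand side.

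Next, write the eigendecomposition $\bSigma=\bU_{\hat L}\bD_1\bU_{\hat L}^\top+\bU_{\hat L^\perp}\bD_2\bU_{\hat L^\perp}^\top$, where $\bD_1=\diag(\sigma_1(\bSigma),\ldots,\sigma_d(\bSigma))\psdgeq\sigma_d(\bSigma)\bI_d$ and $\bD_2\psdgeq 0$. Conjugating by $\bU_{L_*^\perp}$ and discarding the second (PSD) summand gives the block lower bound
\[
\bSigma_{L_*^\perp,L_*^\perp}=\bP'\bD_1\bP'^\top+\bigl(\bU_{L_*^\perp}^\top\bU_{\hat L^\perp}\bigr)\bD_2\bigl(\bU_{L_*^\perp}^\top\bU_{\hat L^\perp}\bigr)^\top\psdgeq \sigma_d(\bSigma)\,\bP'\bP'^\top,
\]
so by monotonicity of the operator norm on $S_+$ and $\|\bP'\bP'^\top\|=\|\bP'\|^2$,
\[
\sigma_1(\bSigma_{L_*^\perp,L_*^\perp})\geq \sigma_d(\bSigma)\,\|\bP'\|^2.
\]

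Finally, Lemma~\ref{lemma:matrixpertutbation}(a) applied with $\bU=\bU_{L_*}$ yields $\sigma_d(\bSigma)\geq \sigma_d(\bSigma_{L_*,L_*})$, so combining with the previous display gives $\|\bP'\|^2\leq \sigma_1(\bSigma_{L_*^\perp,L_*^\perp})/\sigma_d(\bSigma_{L_*,L_*})$, and the first paragraph then yields
\[
\sin\angle(\hat L,L_*)\leq \|\bP'\|\leq \sqrt{\sigma_1(\bSigma_{L_*^\perp,L_*^\perp})/\sigma_d(\bSigma_{L_*,L_*})},
\]
which is in fact sharper than the stated bound (the factor $2$ in the lemma is slack). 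The only step requiring any genuine thought is the first one, translating the paper's definition $\angle=\cos^{-1}\|\bU_{\hat L}^\top\bU_{L_*}\|$ into the off-diagonal norm $\|\bP'\|$; the rest is PSD monotonicity plus the min-max inequality already packaged in Lemma~\ref{lemma:matrixpertutbation}(a). A minor caveat worth checking at the end is the degenerate case $\sigma_d(\bSigma_{L_*,L_*})=0$, where the bound is vacuous since the right-hand side is $+\infty$.
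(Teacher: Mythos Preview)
Your proof is correct and in fact sharper than the paper's: you obtain the bound without the factor~$2$. The two arguments are genuinely different. The paper argues by contradiction: assuming $\sin\angle(\hat L,L_*)>2/\sqrt{\kappa_0}$, it produces a unit vector $\bv\in\hat L^\perp$ with a large $L_*$-component, expands $\bv^\top\bSigma\bv$ via $\bv=\sin\theta\,\bv_1+\cos\theta\,\bv_2$, and uses the PSD Cauchy--Schwarz inequality $|\bv_1^\top\bSigma\bv_2|\le\sqrt{\bv_1^\top\bSigma\bv_1}\sqrt{\bv_2^\top\bSigma\bv_2}$ to force $\bv^\top\bSigma\bv>1$, contradicting $\bv^\top\bSigma\bv\le\sigma_{d+1}(\bSigma)\le\sigma_1(\bSigma_{L_*^\perp,L_*^\perp})$. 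The factor~$2$ is the price of that Cauchy--Schwarz step. Your route instead works blockwise: the eigendecomposition along $\hat L\oplus\hat L^\perp$ gives the clean PSD lower bound $\bSigma_{L_*^\perp,L_*^\perp}\psdgeq\sigma_d(\bSigma)\,\bP'\bP'^\top$, and Lemma~\ref{lemma:matrixpertutbation}(a) converts $\sigma_d(\bSigma)$ to $\sigma_d(\bSigma_{L_*,L_*})$ with no loss. Your approach is more economical and yields the tight constant; the paper's vector-level argument is perhaps more tangible but is looser by the factor~$2$. Your handling of the degenerate case $\sigma_d(\bSigma_{L_*,L_*})=0$ is also correct.
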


The next lemma yields a lower bound on $\sigma_D(\bSigma)$, using   $\bSigma_{L_*^{\perp},L_*^{\perp}}$, $\bSigma_{L_*,L_*}$, 
and $\bSigma_{L_*^{\perp},L_*}$.  
\begin{lemma}\label{lemma:sigmaD}
Assume $\bSigma \in S_{+}(D)$,  
and define $x=\sigma_{D-d}(\bSigma_{L_*^{\perp},L_*^{\perp}})$, $y=\sigma_{d}\left(\bSigma_{L_*,L_*}\right)$ and $z=\|\bSigma_{L_*,L_*^{\perp}}\|$. Then 
\begin{equation}
\label{eq:simgaD_bound}
\sigma_D(\bSigma)\geq ({(x+y)-\sqrt{(x-y)^2+4z^2}})/{2}
\end{equation}
and 
\begin{equation}
\text{if } \ z\leq \sqrt{xy}/2, \ \text{ then } \ \sigma_D(\bSigma)\geq \min(x,y)/3.
\label{eq:sigmaD_ineq}    
\end{equation}

\end{lemma}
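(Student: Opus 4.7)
The plan is to obtain both inequalities from the variational characterization $\sigma_D(\bSigma)=\min_{\|\bv\|=1}\bv^\top\bSigma\bv$, by splitting any unit vector according to the orthogonal decomposition $\mathbb{R}^D = L_*\oplus L_*^\perp$ and carefully tracking the worst-case cross term.

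First, I would write an arbitrary unit vector as $\bv=(\bv_1;\bv_2)$ with $\bv_1\in L_*$, $\bv_2\in L_*^\perp$, and parameterize $\|\bv_1\|=\cos\theta$, $\|\bv_2\|=\sin\theta$. Then
\[
\bv^\top\bSigma\bv \;=\; \bv_1^\top\bSigma_{L_*,L_*}\bv_1 + 2\,\bv_1^\top\bSigma_{L_*,L_*^\perp}\bv_2 + \bv_2^\top\bSigma_{L_*^\perp,L_*^\perp}\bv_2.
\]
Using the definitions of $x$, $y$, $z$ and Cauchy--Schwarz for the off-diagonal block (which is valid since $L_*$ is $d$-dimensional and $L_*^\perp$ is $(D-d)$-dimensional, so $\bv_1^\top\bSigma_{L_*,L_*}\bv_1\geq y\|\bv_1\|^2$ and $\bv_2^\top\bSigma_{L_*^\perp,L_*^\perp}\bv_2\geq x\|\bv_2\|^2$), I obtain
\[
\bv^\top\bSigma\bv \;\geq\; y\cos^2\theta + x\sin^2\theta - 2z\cos\theta\sin\theta.
\]

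Next, I would apply the double-angle identities $\cos^2\theta=(1+\cos 2\theta)/2$, $\sin^2\theta=(1-\cos 2\theta)/2$, and $2\cos\theta\sin\theta=\sin 2\theta$ to rewrite the lower bound as $\frac{x+y}{2}+\frac{y-x}{2}\cos 2\theta - z\sin 2\theta$. Minimizing over $\theta$ via the elementary identity $\min_\phi (a\cos\phi+b\sin\phi)=-\sqrt{a^2+b^2}$ yields exactly
\[
\bv^\top\bSigma\bv \;\geq\; \frac{(x+y)-\sqrt{(x-y)^2+4z^2}}{2},
\]
which gives \eqref{eq:simgaD_bound} after taking the minimum over $\bv$.

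Finally, for \eqref{eq:sigmaD_ineq}, I would assume WLOG that $x\leq y$ so that $\min(x,y)=x$, and show that the hypothesis $z\leq\sqrt{xy}/2$ forces $\sqrt{(x-y)^2+4z^2}\leq y+x/3$, which is equivalent to $(x+y)-\sqrt{(x-y)^2+4z^2}\geq 2x/3$ and hence to the claim. Squaring reduces the required inequality to $4z^2\leq \tfrac{8x}{9}(3y-x)$; using $4z^2\leq xy$ (from the hypothesis) and $x\leq y$, this reduces further to the elementary bound $x/y\leq 15/8$, which is trivially true. The only real obstacle is keeping the algebra for the second part clean, but there is no substantive difficulty since everything follows from the sharp first inequality together with a one-variable trigonometric minimization.
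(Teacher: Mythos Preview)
Your argument is correct and genuinely different from the paper's. The paper establishes \eqref{eq:simgaD_bound} by constructing an explicit rank-$d$ positive semidefinite matrix $\bSigma_0$ (via a Schur-complement-type formula based on $\bSigma_{L_*,L_*}-a\bI$) and then verifying that $\bSigma-\bSigma_0\succeq a\bI$ with $a=\frac{(x+y)-\sqrt{(x-y)^2+4z^2}}{2}$; this requires checking separately that the $(L_*^\perp,L_*^\perp)$ block of $\bSigma-\bSigma_0$ has all eigenvalues at least $a$, using the algebraic identity $x-\frac{z^2}{y-a}=a$. Your route via the Rayleigh quotient and the two-dimensional trigonometric minimization is shorter and more transparent: it reduces the matrix inequality to the elementary fact that the smallest eigenvalue of the $2\times 2$ matrix $\begin{pmatrix} y & -z\\ -z & x\end{pmatrix}$ is exactly $a$. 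For \eqref{eq:sigmaD_ineq} the paper rationalizes $a=\frac{2(xy-z^2)}{(x+y)+\sqrt{(x-y)^2+4z^2}}$ and bounds numerator and denominator separately; your squared-inequality reduction to $x/y\leq 15/8$ is equally valid. One minor point worth making explicit in your write-up: since $\theta\in[0,\pi/2]$, the angle $2\theta$ only ranges over $[0,\pi]$, but the global minimizer of $\frac{y-x}{2}\cos\phi - z\sin\phi$ occurs where $\sin\phi=z/\sqrt{(\frac{y-x}{2})^2+z^2}\geq 0$, hence in $[0,\pi]$, so the unrestricted minimum is indeed attained.
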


The following lemma establishes an  explicit formula of $\bSigma_{L_*^{\perp},L_*^{\perp}}$ in terms of  $\bSigma_{L_*^{\perp},L_*}$, $\bSigma_{L_*,L_*}$ and $\bSigma_{L_*,L_*^{\perp}},$ under the condition that $\rank(\bSigma)=d$.
\begin{lemma}\label{lemma:lowrank}
If $\bSigma\in S_+$, $\rank(\bSigma)=d$ and $\bSigma_{L_*,L_*}$ is invertible, then $$\bSigma_{L_*^{\perp},L_*^{\perp}}=\bSigma_{L_*^{\perp},L_*}\bSigma_{L_*,L_*}^{-1}\bSigma_{L_*,L_*^{\perp}}.$$
\end{lemma}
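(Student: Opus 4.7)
The plan is to exploit the fact that a positive semidefinite matrix of rank $d$ admits a rank-$d$ factorization, and then read the four blocks off directly. Concretely, since $\bSigma \in S_+$ with $\rank(\bSigma)=d$, there exists $\bA \in \R^{D\times d}$ such that $\bSigma = \bA\bA^\top$. Decomposing $\bA$ according to the splitting $\R^D = L_* \oplus L_*^\perp$ as
\[
\bA = \begin{pmatrix} \bA_1 \\ \bA_2 \end{pmatrix},\qquad \bA_1 \in \R^{d\times d},\ \bA_2 \in \R^{(D-d)\times d},
\]
I read off $\bSigma_{L_*,L_*} = \bA_1\bA_1^\top$, $\bSigma_{L_*,L_*^\perp} = \bA_1\bA_2^\top$, $\bSigma_{L_*^\perp,L_*} = \bA_2\bA_1^\top$, and $\bSigma_{L_*^\perp,L_*^\perp} = \bA_2\bA_2^\top$.

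The next step is to observe that invertibility of $\bSigma_{L_*,L_*} = \bA_1\bA_1^\top$ forces $\bA_1 \in \R^{d\times d}$ to be invertible. Given this, I compute directly
\[
\bSigma_{L_*^\perp,L_*}\,\bSigma_{L_*,L_*}^{-1}\,\bSigma_{L_*,L_*^\perp}
= \bA_2\bA_1^\top(\bA_1\bA_1^\top)^{-1}\bA_1\bA_2^\top
= \bA_2\bA_2^\top
= \bSigma_{L_*^\perp,L_*^\perp},
\]
which is the claimed identity.

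There is essentially no obstacle here; the only subtlety is verifying that the factorization $\bSigma = \bA\bA^\top$ with $\bA$ of size $D\times d$ does exist (which follows immediately from the spectral decomposition of a rank-$d$ PSD matrix) and that $\bA_1$ is genuinely invertible, which is forced by the hypothesis on $\bSigma_{L_*,L_*}$. As a sanity check, one can give an alternative one-line argument via the Schur complement: the matrix $S := \bSigma_{L_*^\perp,L_*^\perp} - \bSigma_{L_*^\perp,L_*}\bSigma_{L_*,L_*}^{-1}\bSigma_{L_*,L_*^\perp}$ is PSD (as the Schur complement of a PSD block matrix with invertible pivot), and the standard rank-additive formula $\rank(\bSigma) = \rank(\bSigma_{L_*,L_*}) + \rank(S)$ yields $\rank(S) = d - d = 0$, so $S = 0$. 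Either route gives the lemma.
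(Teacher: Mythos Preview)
Your proposal is correct and follows essentially the same approach as the paper: factor $\bSigma=\bA\bA^\top$ with $\bA\in\R^{D\times d}$, split $\bA$ into blocks along $L_*$ and $L_*^\perp$, and read off the identity. Your version is in fact slightly cleaner, since you use the invertibility of $\bA_1$ directly rather than identifying it with a particular square root, and the Schur-complement alternative you note is a valid second proof.
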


The next lemma describes various technical properties of the operators defined in Section~\ref{sec:tech_lemma_thm1} (e.g., $T(\bSigma)$,  $\Pi_d(\bSigma_+)$ and $\bSigma_{+,\text{in}}$).  
\begin{lemma}\label{lemma:bound_sigma_out}
Let $\bSigma \in S_+$. Then the following properties hold:
\begin{enumerate}[(a)]

\item We have
\begin{align}\label{eq:pertubation}
\|T(\bSigma)-\bSigma_{+,\text{in}}\|
&\leq  2\|\bSigma_{+,\text{out}}\|,
\\[0.5em]\label{eq:pertubation2}
\|\Pi_d(\bSigma_+)-\bSigma_{+,\text{in}}\|
&\leq 2\|\bSigma_{+,\text{out}}\|.
\end{align}
Furthermore,
\begin{equation}
\label{eq:widetildeT2}  
\|[{T}(\bSigma)]_{L_*,L_*^\perp}\|\leq 2\|\bSigma_{+,\text{out}}\|\,\,\text{ and }\,\,\|[{T}(\bSigma)]_{L_*^\perp,L_*^\perp}\|\leq 2\|\bSigma_{+,\text{out}}\|.
\end{equation}

\item Suppose $\nu \geq 2$, where $\nu$ is defined in \eqref{eq:def_nu}. Then
\begin{equation}\label{eq:lemmab}\sigma_1([ T(\bSigma)]_{L_*^\perp,L_*^\perp})\leq \frac{4}{\nu-2}\|\bSigma_{+,\text{out}}\| +\gamma \frac{n_0}{D-d}\sigma_1\left(\bSigma_{L_*^{\perp},L_*^{\perp}}\right),\end{equation}
\begin{equation}\label{eq:lemmae}\frac{\|[T(\bSigma)]_{L_*^{\perp},L_*^{\perp}}\|}{\sigma_d([T(\bSigma)]_{L_*,L_*})}\leq \frac{2}{\nu-2},\end{equation}
and
\begin{align}\label{eq:3rd_bound_lemma8_b}
\frac{\|[T(\bSigma)]_{L_*^{\perp},L_*^{\perp}}\|}{\sigma_{D-d}([T(\bSigma)]_{L_*^{\perp},L_*^{\perp}})}
\leq
 \frac{\frac{4}{\nu-2}\|\bSigma_{+,\text{out}}\| +\gamma \bar{\sigma}_{\text{tail}}(\bSigma_+)}{-\frac{4}{\nu-2}\|\bSigma_{+,\text{out}}\| +\gamma \bar{\sigma}_{\text{tail}}(\bSigma_+)}.
\end{align} 

\item We have
\begin{equation}\label{eq:widetildeT}
\|[T(\bSigma)]_{L_*,L_*}-[g_1(T(\bSigma))]_{L_*,L_*})\|
\leq 
\frac{\|[T(\bSigma)]_{L_*,L_*^\perp}\|^2}{\sigma_{D-d}([T(\bSigma)]_{L_*^\perp,L_*^\perp})}.
\end{equation}

\item  Recalling the notation  $\calS=\bar{\sigma}_{\text{tail}}\left(\sum_{\bx\in\calX}\frac{\bx\bx^\top}{\|\bx\|^2}\right)$, we have 
\begin{align}\label{eq:sigmaD-d}
&\sigma_{D-d}\big([T(\bSigma)]_{L_*^\perp,L_*^\perp}\big)
\geq \gamma\sigma_D(\bSigma)\, \calS.
\end{align}
\end{enumerate}
\end{lemma}
\begin{proof}[Proof of Lemma~\ref{lemma:bound_sigma_out}:]
(a) We first note that the  eigenvalues of 
$\gamma \, \bar{\sigma}_{\text{tail}}(\bSigma_+) \, \bP_{\text{tail}}(\bSigma_+)-\Pi_{\text{tail}}(\bSigma_+)$ are 
$\gamma\bar{\sigma}_{\text{tail}}(\bSigma_+)-\sigma_i(\bSigma_+)$, $d+1\leq i\leq D$, and they satisfy: \begin{align*}\gamma\bar{\sigma}_{\text{tail}}(\bSigma_+)-\sigma_i(\bSigma_+)\leq \gamma\bar{\sigma}_{\text{tail}}(\bSigma_+)\leq \sigma_{d+1}(\bSigma_+), \\ \gamma\bar{\sigma}_{\text{tail}}(\bSigma_+)-\sigma_i(\bSigma_+)\geq -\sigma_i(\bSigma_+)\geq -\sigma_{d+1}(\bSigma_+).\end{align*}
Using this observation, we obtain the following bound:
\begin{multline*}
\|T(\bSigma)-\bSigma_+\|=\|T_2(\bSigma_+,\gamma)-\bSigma_+\|
=
\|\Pi_d(\bSigma_+)+\gamma \, \bar{\sigma}_{\text{tail}}(\bSigma_+) \, \bP_{\text{tail}}(\bSigma_+)-\bSigma_+\|
\\
=\|\gamma \, \bar{\sigma}_{\text{tail}}(\bSigma_+) \, \bP_{\text{tail}}(\bSigma_+)-\Pi_{\text{tail}}(\bSigma_+)\| 
\leq \sigma_{d+1}(\bSigma_+). 
\end{multline*}
Next, we apply Lemma 
\ref{lemma:matrixperturbation}(a), more specifically the second inequality in the first equation of this lemma with $k=D-d$ and $\bU_0 = \bU_{L_*^\perp}$
and note that 
$$\sigma_{d+1}(\bSigma_+)\leq \sigma_1([\bSigma_+]_{L_*^\perp,L_*^\perp})=\sigma_1([\bSigma_{+,\text{out}}]_{L_*^\perp,L_*^\perp}))\leq \sigma_1(\bSigma_{+,\text{out}}).$$

Applying twice the triangle inequality and the above two equations, we obtain the desired estimate as follows:
\begin{align*}
&\|T(\bSigma)-\bSigma_{+,\text{in}}\|\leq \|T(\bSigma)-\bSigma_+\|+\|\bSigma_+-\bSigma_{+,\text{in}}\|\leq \sigma_{d+1}(\bSigma_+)+\|\bSigma_{+,\text{out}}\|\leq 2\|\bSigma_{+,\text{out}}\|,
\end{align*}
Equation \ref{eq:pertubation2} is similarly obtained:
\begin{align*}
\|\Pi_d(\bSigma_+)-\bSigma_{+,\text{in}}\|\leq \sigma_{d+1}(\bSigma_+)+\|\bSigma_+-\bSigma_{+,\text{in}}\|\leq 2\|\bSigma_{+,\text{out}}\|.
\end{align*}

Finally, 
we conclude the upper bound of $\|[{T}(\bSigma)]_{L_*,L_*^\perp}\|$ in \eqref{eq:widetildeT2}:
\begin{equation*}
\|[{T}(\bSigma)]_{L_*,L_*^\perp}\|=\|[{T}(\bSigma)-\bSigma_{+,\text{in}}]_{L_*,L_*^\perp}\|\leq \|{T}(\bSigma)-\bSigma_{+,\text{in}}\|\leq 2\|\bSigma_{+,\text{out}}\|.\end{equation*}
The proof of the upper bound of $\|[{T}(\bSigma)]_{L_*^\perp,L_*^\perp}\|$ in \eqref{eq:widetildeT2} is exactly the same.

(b) We derive the following inequalities, which we clarify below:
\begin{align}\label{eq:sigma1perp}
&\sigma_1([ T(\bSigma)]_{L_*^\perp,L_*^\perp})\leq  \sigma_1([ \Pi_d(\bSigma_+)]_{L_*^{\perp},L_*^{\perp}})+\gamma \, \bar{\sigma}_{\text{tail}}(\bSigma_+) \, \sigma_1([\bP_{\text{tail}}(\bSigma_+)
]_{L_*^{\perp},L_*^{\perp}}) 
\\\nonumber
&=  \sigma_1([ \Pi_d(\bSigma_+)]_{L_*^{\perp},L_*^{\perp}})+\gamma\bar{\sigma}_{\text{tail}}(\bSigma_+)
\leq \frac{\|[\Pi_d(\bSigma_+)]_{L_*^\perp,L_*}\|^2}{\sigma_d([\Pi_d(\bSigma_+)]_{L_*,L_*})} +\gamma \bar{\sigma}_{\text{tail}}(\bSigma_+)
\\
\nonumber
&\leq \frac{4\|\bSigma_{+,\text{out}}\|^2}{\sigma_d(\bSigma_{+,\text{in}})-2\|\bSigma_{+,\text{out}}\|} +\gamma \bar{\sigma}_{\text{tail}}(\bSigma_+)
= \frac{4}{\nu-2}\|\bSigma_{+,\text{out}}\| +\gamma \bar{\sigma}_{\text{tail}}(\bSigma_+).
\end{align}
The first inequality follows from the definition of $T$ and the fact that for $\bA \in S_+$, $\sigma_1(\bA)=\|\bA\|$.  The first equality follows from the observation that if $\bar{\sigma}_{\text{tail}}(\bSigma_+)>0$, then   
$\sigma_1([\bP_{\text{tail}}(\bSigma_+)
]_{L_*^{\perp},L_*^{\perp}})=1$. The second inequality
is clarified next using  Lemma~\ref{lemma:lowrank} and the fact that $\Pi_d(\bSigma_+)$ is of rank $d$:
\begin{align}
\label{eq:bound_sigma_1_multiplication}
\nonumber
&\sigma_1([ \Pi_d(\bSigma_+)]_{L_*^{\perp},L_*^{\perp}})= \sigma_1\left([ \Pi_d(\bSigma_+)]_{L_*^{\perp},L_*}([ \Pi_d(\bSigma_+)]_{L_*,L_*})^{-1}[ \Pi_d(\bSigma_+)]_{L_*,L_*^{\perp}}\right)\\ &\leq\left\|[ \Pi_d(\bSigma_+)]_{L_*^{\perp},L_*}\right\|^2
\cdot\left\|([ \Pi_d(\bSigma_+)]_{L_*,L_*})^{-1}\right\|=\frac{\left\|[\Pi_d(\bSigma_+)]_{L_*^\perp,L_*}\right\|^2}{\sigma_d([\Pi_d(\bSigma_+)]_{L_*,L_*})}.
\end{align}
Next, we verify the third inequality in \eqref{eq:sigma1perp}. The bound for numerator mainly applies     \eqref{eq:pertubation2}:
$$\|[\Pi_d(\bSigma_+)]_{L_*^\perp,L_*}\|=\|[\Pi_d(\bSigma_+)-\bSigma_{+,\text{in}}]_{L_*^\perp,L_*}\|\leq \|\Pi_d(\bSigma_+)-\bSigma_{+,\text{in}}\|\leq 2\|\bSigma_{+,\text{out}}\|.$$ 
The bound for the denominator applies Lemma~\ref{lemma:matrixperturbation}(b) and \eqref{eq:pertubation2}:
\[
|\sigma_d([\Pi_d(\bSigma_+)]_{L_*,L_*})-\sigma_d(\bSigma_{+,\text{in}})|\leq \|[\Pi_d(\bSigma_+)]_{L_*,L_*}-\bSigma_{+,\text{in}}\|\leq 2  \|\bSigma_{+,\text{out}}\|.
\]
The last equality of \eqref{eq:sigma1perp} follows from the definition of $\nu$.

The second term in the RHS of \eqref{eq:sigma1perp} can is bounded as follows (see explanations below): 
\begin{align}\label{eq:sigma1perp2}
\nonumber
&\bar{\sigma}_{\text{tail}}(\bSigma_+)=\frac{1}{D-d}\sum_{i=d+1}^{D}\sigma_{i}(\bSigma_+) =\min_{\bV\in O(D,D-d)}\tr(\bV^\top\bSigma_+\bV)\\
&\leq \frac{1}{D-d}\tr(\bU_{L_*^{\perp}}^\top \bSigma_{+}\bU_{L_*^{\perp}})
=\frac{1}{D-d}
\tr(\bU_{L_*^{\perp}}^\top \bSigma_{+,\text{out}}\bU_{L_*^{\perp}})\leq \frac{n_0}{D-d}\sigma_1(\bSigma_{L_*^{\perp},L_*^{\perp}}).
\end{align}
The second equality follows from the extremal partial trace formula in~\cite[Proposition 1.3.4]{tao2012topics}.  
The third equality is because the inliers lie on $L_*$ and thus  
$\bU_{L_*^{\perp}}^\top \bSigma_{+,\text{in}}\bU_{L_*^{\perp}} = \bm{0}$. Lastly, the second inequality applies the trace bound in \eqref{eq:xout2}. 
Combining \eqref{eq:sigma1perp} and \eqref{eq:sigma1perp2}, \eqref{eq:lemmab} is proved.

To prove \eqref{eq:lemmae}, we note that \begin{equation}\label{eq:kappa2nu3} 
\frac{\|[T(\bSigma)]_{L_*^{\perp},L_*^{\perp}}\|}{\sigma_d([T(\bSigma)]_{L_*,L_*})}\leq \frac{2\|\bSigma_{+,\text{out}}\|}{\sigma_d([\bSigma_{+,\text{in}}]_{L_*,L_*})-2\|\bSigma_{+,\text{out}}\|}=\frac{2}{\nu-2}.
\end{equation}
For the numerator in the  inequality we used
\eqref{eq:widetildeT2}. 
For the denominator in the inequality 
we used Lemma~\ref{lemma:matrixperturbation}(b)
and 
\eqref{eq:pertubation}. 
We can divide the two bounds as long as the denominator is positive, which holds when $\nu >2$. The last equality uses the definition of $\nu$.   

Finally, we conclude \eqref{eq:3rd_bound_lemma8_b}. We bound the  numerator, $\|[T(\bSigma)]_{L_*^{\perp},L_*^{\perp}}\|\equiv \sigma_1([T(\bSigma)]_{L_*^{\perp},L_*^{\perp}})$, using \eqref{eq:sigma1perp}. For the denominator we apply the following bound, whose justification is parallel to that of \eqref{eq:sigma1perp}. Clearly, the combination of both bounds results in \eqref{eq:3rd_bound_lemma8_b}.  
\begin{align*}
&\sigma_{D-d}([ [T(\bSigma)]_{L_*^\perp,L_*^\perp})\geq  \sigma_{D-d}([ T(\bSigma)-\Pi_d(\bSigma_+)]_{L_*^{\perp},L_*^{\perp}})-\sigma_1([ \Pi_d(\bSigma_+)]_{L_*^{\perp},L_*^{\perp}}) \\&= \gamma\bar{\sigma}_{\text{tail}}(\bSigma_+)- \sigma_1([ \Pi_d(\bSigma_+)]_{L_*^{\perp},L_*^{\perp}})
\nonumber
\geq  \gamma \bar{\sigma}_{\text{tail}}(\bSigma_+) - \frac{\|[\Pi_d(\bSigma_+)]_{L_*^\perp,L_*}\|^2}{\sigma_d([\Pi_d(\bSigma_+)]_{L_*,L_*})}
\\&\geq  \gamma \bar{\sigma}_{\text{tail}}(\bSigma_+)-\frac{4\|\bSigma_{+,\text{out}}\|^2}{\sigma_d(\bSigma_{+,\text{in}})-2\|\bSigma_{+,\text{out}}\|}
= \gamma \bar{\sigma}_{\text{tail}}(\bSigma_+)-\frac{4}{\nu-2}\|\bSigma_{+,\text{out}}\|.
\nonumber
\end{align*}

(c) Note that $\|[T(\bSigma)-g_1(T(\bSigma))]_{L_*,L_*}\|= 
\|
[T(\bSigma)]_{L_*,L_*^\perp}([T(\bSigma)]_{L_*^\perp,L_*^\perp})^{-1}[T(\bSigma)]_{L_*^\perp,L_*}
\|$, where this equality directly follows from the definition of $g_1$ in \eqref{eq:defg1}. We bound the RHS of this inequality using similar arguments as those in \eqref{eq:bound_sigma_1_multiplication} (i.e., the sub-multiplicative property of the spectral norm and for $\bA \in S_{++}(D-d)$, $\|\bA^{-1}\|=\sigma_1(\bA^{-1})=\sigma_{D-d}^{-1}(\bA)$). 

(d)
Applying Lemma~\ref{lemma:matrixperturbation}(a) (specifically, its last inequality with $k=D-d$), the definitions of $T$, $T_2$, and $\bSigma_+$ in Section~\ref{sec:notation_thm1}, as well as \eqref{eq:xout3}  and the definition of $\calS$, results in
\begin{align*}
&\sigma_{D-d}\big([T(\bSigma)]_{L_*^\perp,L_*^\perp}\big)\geq\sigma_{D}\big(T(\bSigma)\big)=\sigma_{D}\big(T_2(\bSigma_+,\gamma)\big)  = \gamma\bar{\sigma}_{\text{tail}}(\bSigma_+)
\geq \gamma\sigma_D(\bSigma)\, \calS.
\qedhere
\end{align*}
\end{proof}

The last lemma of this section provides lower and upper bounds for  the smallest eigenvalue of a certain product of p.s.d.~matrices. 
\begin{lemma}\label{lemma:matrixeigenvalue}
If $\bA$, $\bB\in S_+(d)$, then
\[
\sigma_d(\bA\bB\bA)\geq \sigma_d^2(\bA)\sigma_d(\bB)
\ \text{ and } \
\sigma_d(\bA\bB\bA)\leq \sigma_1^2(\bA)\sigma_d(\bB).
\]
\end{lemma}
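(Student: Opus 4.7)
The plan is to reduce everything to a symmetric PSD matrix via a similarity trick and then sandwich it using the operator-norm bounds on $\bA^2$.

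First, I would observe that $\bA\bB\bA$ and $\bB^{1/2}\bA^2\bB^{1/2}$ share the same nonzero eigenvalues, hence (since both are $d\times d$ PSD matrices) the same full ordered spectrum. Indeed, applying the standard identity that $\bX\bY$ and $\bY\bX$ have the same nonzero eigenvalues, the eigenvalues of $\bA\bB\bA=\bA(\bB\bA)$ match those of $(\bB\bA)\bA=\bB\bA^2$, which in turn match those of $\bA^2\bB$ and of $\bB^{1/2}(\bA^2\bB^{1/2})=(\bA^2\bB^{1/2})\bB^{1/2}\cdot$ (rearranged correctly) $=\bB^{1/2}\bA^2\bB^{1/2}$. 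Since both $\bA\bB\bA$ and $\bB^{1/2}\bA^2\bB^{1/2}$ are $d\times d$ and PSD, this forces $\sigma_d(\bA\bB\bA)=\sigma_d(\bB^{1/2}\bA^2\bB^{1/2})$.

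Next, since $\bA\in S_+(d)$, the spectral calculus gives
\[
\sigma_d(\bA)^2\,\bI\;\psdleq\;\bA^2\;\psdleq\;\sigma_1(\bA)^2\,\bI.
\]
Conjugating by $\bB^{1/2}$ (a legal operation because $\bX\mapsto \bB^{1/2}\bX\bB^{1/2}$ preserves the Loewner order) produces
\[
\sigma_d(\bA)^2\,\bB\;\psdleq\;\bB^{1/2}\bA^2\bB^{1/2}\;\psdleq\;\sigma_1(\bA)^2\,\bB.
\]
Applying $\sigma_d$ (which is monotone with respect to $\psdleq$ on PSD matrices, by Weyl's inequality or Lemma~\ref{lemma:matrixpertutbation}(b) combined with positivity) to all three terms and using the first step yields
\[
\sigma_d(\bA)^2\,\sigma_d(\bB)\;\leq\;\sigma_d(\bA\bB\bA)\;\leq\;\sigma_1(\bA)^2\,\sigma_d(\bB),
\]
which is exactly the claim.

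There is essentially no obstacle here; the only subtle point is verifying that the eigenvalues of $\bA\bB\bA$ and $\bB^{1/2}\bA^2\bB^{1/2}$ truly coincide (not merely their nonzero parts), which is settled by the fact that both matrices are $d\times d$ and positive semidefinite, so zero eigenvalues are counted with the same multiplicity. The edge cases where $\bA$ or $\bB$ is singular are handled automatically: if $\sigma_d(\bA)=0$ the lower bound is trivial, and if $\sigma_d(\bB)=0$ then $\operatorname{rank}(\bB^{1/2}\bA^2\bB^{1/2})\leq\operatorname{rank}(\bB^{1/2})<d$, so $\sigma_d(\bA\bB\bA)=0$ and the upper bound holds with equality.
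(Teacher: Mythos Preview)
Your proof is correct and takes a genuinely different route from the paper. The paper argues each inequality separately via variational characterizations: the lower bound is obtained by chaining $\|\bA\bB\bA\bx\|\ge\sigma_d(\bA)\|\bB\bA\bx\|\ge\sigma_d(\bA)\sigma_d(\bB)\|\bA\bx\|\ge\sigma_d^2(\bA)\sigma_d(\bB)\|\bx\|$ (using $\sigma_d(\cdot)=\min_{\bx}\|(\cdot)\bx\|/\|\bx\|$ for PSD matrices), while the upper bound is obtained by plugging the explicit test vector $\bar{\bu}=\bA^{-1}\bu_d(\bB)$ into the Rayleigh quotient of $\bA\bB\bA$ and bounding $\|\bar{\bu}\|\ge 1/\sigma_1(\bA)$. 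Your approach instead passes once to the spectrally equivalent matrix $\bB^{1/2}\bA^2\bB^{1/2}$ and then sandwiches $\bA^2$ in the Loewner order, which handles both inequalities simultaneously and avoids the case split on whether $\bA$ is invertible. The paper's argument is slightly more elementary in that it never invokes the $\bX\bY\sim\bY\bX$ spectral identity, whereas yours is more uniform and makes the symmetry between the two bounds transparent.
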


\subsubsection{Reduction of the Theorem}
\label{sec:reduce_noiseless-thm}
To prove the theorem, we begin by reducing the task to proving an alternative statement. We first introduce and recall some notation. We find it convenient to use the following scaled version of $\calA$: 
$\tilde{\calA}=\frac{n_0}{D-d}{\calA}$ and note that 
$\calR=\tilde{\calA}/\calS$. Using this notation, we define \begin{align}\label{eq:tildekappa1}\tilde{\kappa}_1 =&C \, \frac{\calA}{\dssnr\cdot \sigma_d(\bSigma_{\text{in},*})}\left(\kappa_{\text{in},*}+\frac{\calA}{\dssnr-\gamma}+\frac{\kappa_2\calR}{\gamma}(1+\kappa_{\text{in},*})\right)\\=& C \, \frac{d\,\tilde{\calA}}{n_1\,\sigma_d(\bSigma_{\text{in},*})}\left(\kappa_{\text{in},*}+\frac{\tilde{\calA}}{\frac{n_1}{d}-\gamma\frac{n_0}{D-d}}+\frac{\kappa_2\tilde{\calA}}{\gamma \calS}(1+\kappa_{\text{in},*})\right).\nonumber\end{align}
We note that in view of 
\eqref{eq:kappa1} and the definitions of $\tilde{\kappa}_1$, $\dssnr$ and $\kappa_{\text{in},*}$, 
$\tilde{\kappa}_1 \leq \kappa_1 /  \sigma_1(\bSigma_{\text{in},*})$.

We recall the definition of $C_0$ in \eqref{eq:C_value} 
and note that the assumption $\dssnr > \gamma$ implies that $C_0>1$.
Using these notations and definitions and the ones from Section~\ref{sec:notation_thm1}, we formulate the alternative statement as follows: 
\begin{multline}
\label{eq:overiterations}
\text{If }  \hat{\kappa}_1(\bSigma)\geq \tilde{\kappa}_1   \text{ and }  \hat{\kappa}_2(\bSigma)\leq \max(\kappa_2,7),\\   \text{ then }  
\hat{\kappa}_1(T(\bSigma))\geq C_0 \hat{\kappa}_1(\bSigma)  \text{ and } \hat{\kappa}_2(T(\bSigma))\leq \max(\kappa_2,7).    
\end{multline}
We will show next that \eqref{eq:overiterations} implies the conclusion of the theorem.

We first note that $\hat{\kappa}_1(\bSigma^{(0)})>\tilde{\kappa}_1$. 
Indeed, direct application of Lemma~\ref{lemma:matrixeigenvalue} yields
\[
\sigma_d(\bSigma_{\text{in},*}^{-0.5}\bSigma^{(0)}\bSigma_{\text{in},*}^{-0.5})\geq\sigma_d(\bSigma_{\text{in},*}^{-0.5})\sigma_d(\bSigma^{(0)})\sigma_d(\bSigma_{\text{in},*}^{-0.5}) = \frac{\sigma_d(\bSigma^{(0)})}{\sigma_1(\bSigma_{\text{in},*})}
\]
and consequently
\begin{equation}
\label{eq:k1hatlargerk1tilde}
    \hat{\kappa}_1(\bSigma^{(0)}) = \frac{\sigma_d\Big(\bSigma_{\text{in},*}^{-0.5}g_1(\bSigma^{(0)})\bSigma_{\text{in},*}^{-0.5}\Big)}{\sigma_1\Big(\bSigma^{(0)}_{L_*^\perp,L_*^\perp}\Big)} \geq \frac{\sigma_d\Big(g_1(\bSigma^{(0)})\Big)}{\sigma_1(\bSigma_{\text{in},*})\sigma_1\Big(\bSigma^{(0)}_{L_*^\perp,L_*^\perp}\Big)} = \frac{\kappa_1}{\sigma_1(\bSigma_{\text{in},*})}
> \tilde{\kappa}_1.  
\end{equation}
Furthermore, since $\hat{\kappa}_2(\bSigma^{(0)})=\kappa_2$, both conditions of \eqref{eq:overiterations} are satisfied for $\bSigma=\bSigma^{(0)}$. We obtain by induction that the conclusion of \eqref{eq:overiterations} holds for $\bSigma^{(k+1)}=T\left(\bSigma^{(k)}\right)$, $k \geq 0$.

We note that \eqref{eq:overiterations} and \eqref{eq:k1hatlargerk1tilde} imply that $\hat{\kappa}_1\left(\bSigma^{(k)}\right)\rightarrow \infty$ and the convergence to infinity is linear. We further note that $$\bSigma_{L_*,L_*^\perp}\bSigma^{-1}_{L_*^\perp,L_*^\perp}\bSigma_{L_*^\perp,L_*}=(\bSigma_{L_*,L_*^\perp}\bSigma^{-0.5}_{L_*^\perp,L_*^\perp})(\bSigma_{L_*,L_*^\perp}\bSigma^{-0.5}_{L_*^\perp,L_*^\perp})^\top \, \in S_+$$ 
and thus
$$\bSigma_{L_*,L_*}\psdgeq g_1(\bSigma)\equiv \begin{pmatrix}
\bSigma_{L_*,L_*}-\bSigma_{L_*,L_*^\perp}\bSigma^{-1}_{L_*^\perp,L_*^\perp}\bSigma_{L_*^\perp,L_*} & 0 \\
0 & 0
\end{pmatrix}
$$
and consequently, 
\begin{equation}\label{eq:reduction2}
\sigma_d\left(\bSigma_{L_*,L_*}\right)/\sigma_1\left(\bSigma_{L_*^\perp,L_*^\perp}\right)\geq \sigma_d(g_1(\bSigma))/\sigma_1\left(\bSigma_{L_*^\perp,L_*^\perp}\right)=\hat{\kappa}_1(\bSigma).
\end{equation}  
Combining \eqref{eq:reduction2} with Lemma~\ref{lemma:subspace}, we conclude that $\sin\angle({L}^{(k)},L_*)\leq 2/\sqrt{\hat{\kappa}_1\left(\bSigma^{(k)}\right)}$. This observation and the fact $\hat{\kappa}_1(T(\bSigma))\geq C_0 \hat{\kappa}_1(\bSigma)$ (according to \eqref{eq:overiterations}) imply that  $\angle({L}^{(k)},L_*)$ converges $r$-linearly to zero {in the sense that $\angle(L^{(k)},L_*)\leq c \cdot C_0^{-k/2}$}.

\subsubsection{Proof of \eqref{eq:overiterations}}
\label{sec:prove_reduce_noiseless-thm}
We separately prove the two parts of \eqref{eq:overiterations}.

\textbf{Part I: Proof that $\boldsymbol{\hat{\kappa}_1(T(\bSigma))\geq C_0 \hat{\kappa}_1(\bSigma)}$: }   
Recall that 
\begin{equation}\hat{\kappa}_1(T(\bSigma))=\frac{h({T}(\bSigma))}{\sigma_1([ T(\bSigma)]_{L_*^\perp,L_*^\perp})}
=
\frac{\sigma_d(\bSigma_{\text{in},*}^{-0.5}[g_1(T(\bSigma))]_{L_*,L_*}\bSigma_{\text{in},*}^{-0.5})}{\sigma_1([ T(\bSigma)]_{L_*^\perp,L_*^\perp})}.
\label{eq:kappa1_proof}\end{equation}
We estimate the denominator of $\hat{\kappa}_1(T(\bSigma))$ under a special condition, then its numerator, and at last use these estimates to conclude that $\hat{\kappa}_1(T(\bSigma))\geq C_0 \hat{\kappa}_1(\bSigma)$.

\textbf{A lower bound on $\nu$:} 
We bound $\nu$ from below and later use  the derived condition in different bounds. For this purpose, we  obtain an upper bound for $h(\bSigma_{+,\text{in}})$. 
Applying the fact that $g_1(\bSigma_{+,\text{in}}) = \bSigma_{+,\text{in}}$ (in view of our convention of the inverse of a singular matrix) and Lemma~\ref{lemma:matrixeigenvalue}, while noting that the matrices $\bSigma_{\text{in},*}^{-0.5}$ and $[\bSigma_{+,\text{in}}]_{L_*,L_*}$ are in $S_+(d)$, yields
\begin{equation}
\label{eq:upper_bound_h}
h(\bSigma_{+,\text{in}})=\sigma_d\big(\bSigma_{\text{in},*}^{-0.5}[\bSigma_{+,\text{in}}]_{L_*,L_*}\bSigma_{\text{in},*}^{-0.5}\big)\leq  \frac{\sigma_d\big([\bSigma_{+,\text{in}}]_{L_*,L_*}\big)}{\sigma_d(\bSigma_{\text{in},*})}=\frac{\sigma_d\big(\bSigma_{+,\text{in}}\big)}{\sigma_d(\bSigma_{\text{in},*})}.
\end{equation}

We lower bound  $\nu=\sigma_d(\bSigma_{+,\text{in}})/\|\bSigma_{+,\text{out}}\|$ (see \eqref{eq:def_nu})  
as follows (see explanations below):
\begin{equation}\label{eq:est_nu}
\nu \geq \frac{h(\bSigma_{+,\text{in}})\sigma_d(\bSigma_{\text{in},*})}{\|\bSigma_{+,\text{out}}\|}\geq \frac{\frac{n_1}{d} \, h(\bSigma) \, \sigma_d(\bSigma_{\text{in},*})}{\tilde{\calA}\,\sigma_1\left(\bSigma_{L_*^\perp,L_*^\perp}\right)}
=\hat{\kappa}_1(\bSigma)\frac{\dssnr \cdot \sigma_d(\bSigma_{\text{in},*})}{{\calA}}.
\end{equation}
The first inequality applies \eqref{eq:upper_bound_h} and the definition of $\nu$.  The second one applies \eqref{eq:xin} for the numerator and the first inequality in \eqref{eq:xout2} for the denominator. The first equality uses the definition of $\hat{\kappa}_1$ in  \eqref{eq:def_kappa_1_2} and the last one the definitions of $\dssnr$ and $\tilde{\calA}$.

We further show that $\nu\geq C$, where $C$ is defined in \eqref{eq:C_value}, and, consequently, $\nu \geq 70$. Applying \eqref{eq:kappa1} (and \eqref{eq:tildekappa1} for the second inequality),  and the fact that $\kappa_{\text{in},*}\geq 1$,   
we note that 
\begin{equation}
\label{eq:est_nv_asumption}
\kappa_1\geq 
C \, \frac{\calA \cdot \kappa_{\text{in},*}}{\dssnr} =
C \, \frac{\calA \cdot 
\sigma_1(\bSigma_{\text{in},*})
}{\dssnr\cdot \sigma_d(\bSigma_{\text{in},*})}
\ \text{ and } \
\tilde{\kappa}_1 \geq C \, \frac{\calA}{\dssnr\cdot \sigma_d(\bSigma_{\text{in},*})}.
\end{equation}
Finally, using the assumption $\hat{\kappa}_1(\bSigma)\geq \tilde{\kappa}_1$ (see \eqref{eq:overiterations}),  \eqref{eq:est_nu} and~\eqref{eq:est_nv_asumption}, we conclude that $\nu\geq C$. 

\textbf{Estimate of the denominator of $\boldsymbol{\hat{\kappa}_1}$:} 
We bound the denominator of $\hat{\kappa}_1(T(\bSigma))$ from above.  Combining the bound on $\|\bSigma_{+,\text{out}}\|$ in \eqref{eq:xout2} and \eqref{eq:lemmab}   (note we verified above its required condition, that is, $\nu\geq 2$), we conclude the upper bound:
\begin{equation}\label{eq:kappadenominator}
\sigma_1([ T(\bSigma)]_{L_*^\perp,L_*^\perp})\leq \frac{4}{\nu-2} \tilde{\calA}\sigma_1\left(\bSigma_{L_*^\perp,L_*^\perp}\right)+\gamma \frac{n_0}{D-d}\sigma_1\left(\bSigma_{L_*^\perp,L_*^\perp}\right).
\end{equation}

\textbf{Estimate of the numerator of $\boldsymbol{\hat{\kappa}_1}$:}
We bound $h(T(\bSigma))$ using arguments detailed below:
\begin{multline}
\label{eq:kappa1_numerator}
h(T(\bSigma))=
\sigma_d(\bSigma_{\text{in},*}^{-0.5}[g_1(T(\bSigma))]_{L_*,L_*}\bSigma_{\text{in},*}^{-0.5}) \geq\\
 \sigma_d(\bSigma_{\text{in},*}^{-0.5}[\bSigma_{+,\text{in}}]_{L_*,L_*}\bSigma_{\text{in},*}^{-0.5})-\big\|\bSigma_{\text{in},*}^{-0.5}\,([\bSigma_{+,\text{in}}]_{L_*,L_*}-[g_1(T(\bSigma))]_{L_*,L_*})\,\bSigma_{\text{in},*}^{-0.5}\big\| \geq
\\h(\bSigma_{+,\text{in}})\!-\!\frac{\|[\bSigma_{+,\text{in}}]_{L_*,L_*}-[g_1(T(\bSigma))]_{L_*,L_*}\|}{\sigma_d(\bSigma_{\text{in},*})}\geq h(\bSigma_{+,\text{in}})\!-\!
\frac{2\|\bSigma_{+,\text{out}}\|\!+\!\frac{\|[T(\bSigma)]_{L_*,L_*^\perp}\|^2}{\sigma_{D-d}([T(\bSigma)]_{L_*^\perp,L_*^\perp})}}{\sigma_d(\bSigma_{\text{in},*})}.
\end{multline}
The first inequality follows from Lemma \ref{lemma:matrixperturbation}(b). The second inequality applies the definition of $h$ to its first term and the following argument, which is similar to the one in \eqref{eq:upper_bound_h}, to its second term:
\[
\|\bSigma_{\text{in},*}^{-0.5}\bX\bSigma_{\text{in},*}^{-0.5}\|\leq \|\bSigma_{\text{in},*}^{-0.5}\|\cdot\|\bX\|\cdot\|\bSigma_{\text{in},*}^{-0.5}\| ={\|\bX\|}/{\sigma_d(\bSigma_{\text{in},*})}.
\]
The last inequality applies the triangle inequality, \eqref{eq:pertubation} and \eqref{eq:widetildeT}.

Applying \eqref{eq:widetildeT2} and \eqref{eq:sigmaD-d},  and then the bound of $\|\bSigma_{+,\text{out}}\|$ in \eqref{eq:xout2} yields
\begin{align}\label{eq:kappa1numerator}
&\frac{\|[T(\bSigma)]_{L_*,L_*^\perp}\|^2}{\sigma_{D-d}([T(\bSigma)]_{L_*^{\perp},L_*^{\perp}})}
\leq  \frac{(2\|\bSigma_{+,\text{out}}\|)^2}{\gamma\sigma_D(\bSigma)\calS} =  \frac{4\|\bSigma_{+,\text{out}}\|\tilde{\calA}}{\gamma\calS}\cdot\frac{\sigma_1\left(\bSigma_{L_*^\perp,L_*^\perp}\right)}{\sigma_D(\bSigma)} =  \frac{4\|\bSigma_{+,\text{out}}\|\tilde{\calA}}{\gamma\calS} \, \hat{\kappa}_2(\bSigma).
\end{align}
Plugging the above bound in \eqref{eq:kappa1_numerator} results in the following estimate of the numerator of \eqref{eq:kappa1_proof}:
\begin{equation}
\label{eq:bound_nuemrator_final}
h(T(\bSigma))\geq h(\bSigma_{+,\text{in}})-
\frac{2\|\bSigma_{+,\text{out}}\|\!+\!\frac{4\|\bSigma_{+,\text{out}}\|  \tilde{\calA}  \hat{\kappa}_2(\bSigma)}{\gamma\calS}}{\sigma_d(\bSigma_{\text{in},*})}.
\end{equation}

\textbf{Conclusion of the proof that $\boldsymbol{\hat{\kappa}_1(T(\bSigma))\geq C_0 \hat{\kappa}_1(\bSigma)}$}:
We obtain the following bound for $\nu > 2$, with explanations provided below: 
\begin{multline}
\hat{\kappa}_1(T(\bSigma))\geq\frac{h(\bSigma_{+,\text{in}})-\Big({2+\frac{4\tilde{\calA}\hat{\kappa}_2(\bSigma)}{\gamma\calS}}\Big)\frac{\|\bSigma_{+,\text{out}}\|}{\sigma_d(\bSigma_{\text{in},*})}}{\frac{4}{\nu-2}\tilde{\calA}\sigma_1(\bSigma_{L_*^{\perp},L_*^{\perp}}) +\gamma \frac{n_0}{D-d}\sigma_1(\bSigma_{L_*^{\perp},L_*^{\perp}})}\\
= \frac{h(\bSigma_{+,\text{in}})-\Big({2+\frac{4\tilde{\calA}\hat{\kappa}_2(\bSigma)}{\gamma\calS}}\Big)\frac{{\sigma_d(\bSigma_{+,\text{in}})}}{\nu\sigma_d(\bSigma_{\text{in},*})}}{\frac{4}{\nu-2}\tilde{\calA}\sigma_1(\bSigma_{L_*^{\perp},L_*^{\perp}}) +\gamma \frac{n_0}{D-d}\sigma_1(\bSigma_{L_*^{\perp},L_*^{\perp}})}
\geq\frac{h(\bSigma_{+,\text{in}})-(2+\frac{4\tilde{\calA}\hat{\kappa}_2(\bSigma)}{\gamma\calS}) \frac{\kappa_{\text{in},*}
\, h(\bSigma_{+,\text{in}})}{\nu}}{\frac{4}{\nu-2}\tilde{\calA}\sigma_1(\bSigma_{L_*^{\perp},L_*^{\perp}}) +\gamma \frac{n_0}{D-d}\sigma_1(\bSigma_{L_*^{\perp},L_*^{\perp}})}\\
=\frac{1-(2+\frac{4\tilde{\calA}\hat{\kappa}_2(\bSigma)}{\gamma\calS}) \frac{\kappa_{\text{in},*}}
{\nu}}
{\frac{4}{\nu-2}\tilde{\calA} +\gamma \frac{n_0}{D-d}}\frac{h(\bSigma_{+,\text{in}})}{\sigma_1(\bSigma_{L_*^{\perp},L_*^{\perp}})}\geq \frac{1-(2+\frac{4\tilde{\calA}\hat{\kappa}_2(\bSigma)}{\gamma\calS}) \frac{\kappa_{\text{in},*}}
{\nu}}{\frac{4}{\nu-2} \, \tilde{\calA} +\gamma \frac{n_0}{D-d}}\frac{n_1}{d}\hat{\kappa}_1(\bSigma)\label{eq:est_kappa1}.    
\end{multline}
The first inequality follows by combining \eqref{eq:kappa1_proof} with \eqref{eq:kappadenominator} and \eqref{eq:bound_nuemrator_final}, where the RHS of \eqref{eq:bound_nuemrator_final} has been simplified. 
The first equality uses the definition of $\nu$ (see \eqref{eq:def_nu}) in the last multiplicative term of the numerator. The second  inequality applies $\sigma_d(\bSigma_{+,\text{in}})\leq h(\bSigma_{+,\text{in}}) \sigma_1(\bSigma_{\text{in},*})$ (which follows from the first inequality in  Lemma~\ref{lemma:matrixeigenvalue}, the definition of $h$ and the fact that $\sigma_1(\bSigma_{\text{in},*})=\sigma_d^{-1}(\bSigma_{\text{in},*}^{-1})$), and the definition of $\kappa_{\text{in},*}$ in \eqref{eq:defkinstar}. The last inequality follows from \eqref{eq:xin} and the definition $\hat{\kappa}_1(\bSigma)=h(\bSigma)/{\sigma_1(\bSigma_{L_*^{\perp},L_*^{\perp}})}$. 

We further simplify the above bound. We first note that combining  \eqref{eq:tildekappa1}, the assumption $\hat{\kappa}_1(\bSigma)\geq \tilde{\kappa}_1$  (see \eqref{eq:overiterations}), and \eqref{eq:est_nu} yields 
\begin{equation}\label{eq:kappa1nu}
\nu\geq C\left(\kappa_{\text{in},*}+\frac{\tilde{\calA}}{\frac{n_1}{d}-\gamma\frac{n_0}{D-d}}+\frac{\kappa_2\tilde{\calA}}{\gamma \calS}(1+\kappa_{\text{in},*})\right).
\end{equation}
Using this bound, while focusing on three different terms of its RHS, and the inequalities $\nu \geq 4$ and $\hat{\kappa}_2(\bSigma)\leq \max(\kappa_2,7) \leq 7 \kappa_2$ (see \eqref{eq:overiterations}),
we obtain the estimates 
\begin{equation}
\frac{4}{\nu-2}\tilde{\calA}\leq \frac{8}{\nu}\tilde{\calA} \leq \frac{8}{C}\Big(\frac{n_1}{d}-\gamma\frac{n_0}{D-d}\Big), \ \ 
2 \cdot \frac{\kappa_{\text{in},*}}{\nu} 
\leq \frac{2}{C},
\ \text{ and } \ \frac{4\tilde{\calA}\hat{\kappa}_2(\bSigma)}{\gamma\calS} \cdot\frac{\kappa_{\text{in},*}}{\nu}\leq \frac{28}{C}.
\label{eq:aux_eq_to_figure_const}    
\end{equation}
Combining \eqref{eq:est_kappa1}-\eqref{eq:aux_eq_to_figure_const} leads to the bound
\begin{equation}\hat{\kappa}_1(T(\bSigma))\geq 
\frac{1-\frac{(2+28)}{C}}{\frac{8}{C}\Big(\frac{n_1}{d}-\gamma\frac{n_0}{D-d}\Big)+\gamma\frac{n_0}{D-d}}
\cdot
\frac{n_1}{d}  
\cdot
\hat{\kappa}_1(\bSigma).\label{eq:overiterations0}\end{equation}

To conclude the proof, we show that 
\begin{equation}
\label{eq:conditions_C_aux}
    \frac{1-\frac{30}{C}}{\frac{8}{C}\Big(\frac{n_1}{d}-\gamma\frac{n_0}{D-d}\Big)+\gamma\frac{n_0}{D-d}}\frac{n_1}{d} 
\geq C_0.
\end{equation}
Using the definition of $\dssnr$ and the constant $C_0$ in \eqref{eq:C_value}, we rewrite this equation as follows:
\begin{equation}
\label{eq:conditions_C}
\frac{1-\frac{30}{C}}{\frac{8}{C}\left(\frac{\dssnr}{\gamma}-1\right)+1}\frac{\dssnr}{\gamma} \geq  2\cdot \frac{\dssnr}{\dssnr+\gamma}.  
\end{equation}
One can note that \eqref{eq:conditions_C} holds when  
$C\geq{(46 \cdot \dssnr + 14\cdot \gamma)}/{(\dssnr-\gamma)}$, and that the constant $C$ specified in \eqref{eq:C_value} satisfies this later inequality. Consequently, $\hat{\kappa}_1(T(\bSigma))\geq  C_0\hat{\kappa}_1(\bSigma)$.

\textbf{Part II: Proof that $\boldsymbol{\hat{\kappa}_2(T(\bSigma))\geq}$ max$\boldsymbol{(\hat{\kappa}_2(\bSigma),7)}$: } 
First, we bound the denominator of $\hat{\kappa}_2(T(\bSigma))$, that is, ${\sigma_D(T(\bSigma))}$, where the main tool is Lemma~\ref{lemma:sigmaD}. Next, we use this estimate and Lemma~\ref{lemma:bound_sigma_out}(b) to conclude that ${\hat{\kappa}_2(T(\bSigma))\leq \max(\hat{\kappa}_2(\bSigma)},7)$.

\textbf{Estimating the denominator of $\boldsymbol{\hat{\kappa}_2(T(\bSigma))}$: } 
We first upperbound $\|[T(\bSigma)]_{L_*,L_*^{\perp}}\|$:
\begin{equation}
\label{eq:bound_of_T_for_lemma6}
\|[T(\bSigma)]_{L_*,L_*^{\perp}}\|
\leq \sqrt{\sigma_{D-d}([T(\bSigma)]_{L_*^{\perp},L_*^{\perp}})\sigma_{d}([T(\bSigma)]_{L_*,L_*})}/2.
\end{equation}
This equation is 
the condition stated in  \eqref{eq:sigmaD_ineq}
of Lemma~\ref{lemma:sigmaD} for $T(\bSigma)$. This lemma thus implies the following lower bound on the denominator of $\hat{\kappa}_2(T(\bSigma))$: 
\begin{align}\label{eq:kappa2denominator}
&\sigma_{D}( T(\bSigma))
\geq \frac{1}{3}\min\Big(\sigma_{D-d}([T(\bSigma)]_{L_*^{\perp},L_*^{\perp}}),\sigma_{d}([T(\bSigma)]_{L_*,L_*})\Big).
\end{align}

In order to derive \eqref{eq:bound_of_T_for_lemma6}, we first note that 
\begin{align}\label{eq:kappa2nu}&
(\nu-2)\gamma\calS = \frac{\nu-2}{\nu} \cdot \nu\gamma\calS
\geq \frac{\nu-2}{\nu} \cdot 2 C \kappa_2 \tilde{\calA}
\geq \frac{C-2}{C}\cdot 2C\kappa_2\tilde{\calA}=
 2(C-2) \tilde{\calA}\kappa_2\\\geq& \frac{2(C-2)}{7} \tilde{\calA}\max(\kappa_2,7) \geq 16\tilde{\calA}\hat{\kappa}_2(\bSigma).\nonumber
\end{align}
The first inequality follows from $\nu\geq 2C\frac{\kappa_2\tilde{\calA}}{\gamma \calS}$, using \eqref{eq:kappa1nu} and only the third term in the sum as a lower bound of $\nu$) and the fact $\kappa_{\text{in},*}\geq 1$.  
The second inequality follows from the fact verified earlier that $\nu \geq C$.   
The fourth and last inequality follows from $C \geq 58$ and the 
condition
$\hat{\kappa}_2(\bSigma)\leq \max(\kappa_2,7)$
in \eqref{eq:overiterations}.

Next, we verify the following bound: 
\begin{align}
\nonumber
&(2\|[T(\bSigma)]_{L_*,L_*^{\perp}}\|)^2 \leq
16 \, \|\bSigma_{+,\text{out}}\|^2= 16\,\|\bSigma_{+,\text{out}}\|\cdot \|\bSigma_{+,\text{out}}\|\\
&\leq 16\,\tilde{\calA} \sigma_1\left(\bSigma_{L_*^\perp,L_*^\perp}\right)\cdot \|\bSigma_{+,\text{out}}\| \leq \frac{(\nu-2)\,\gamma\,\calS}{\hat{\kappa}_2(\bSigma)}{\sigma_1\left(\bSigma_{L_*^\perp,L_*^\perp}\right)}\cdot \|\bSigma_{+,\text{out}}\|
\label{eq:kappa2nu2}
\\
&=
(\nu-2) \,\gamma\,\calS\,\sigma_D(\bSigma) \,\|\bSigma_{+,\text{out}}\|=\Big((\nu-2)\|\bSigma_{+,\text{out}}\|\Big)\Big(\gamma\sigma_D(\bSigma)\calS\Big).
\nonumber
\end{align} 
The first inequality follows from
\eqref{eq:widetildeT2}. 
The second inequality
applies \eqref{eq:xout2}.
The third inequality applies \eqref{eq:kappa2nu}. The next equality applies the definition of $\hat{\kappa}_2(\bSigma)$.

We bound the first multiplicative term in the RHS of \eqref{eq:kappa2nu2} using the definition of $\nu$ in \eqref{eq:def_nu}, and then Lemma~\ref{lemma:matrixperturbation}(b) and~\eqref{eq:pertubation} :
\[
(\nu-2)\|\bSigma_{+,\text{out}}\|
=
\sigma_d([\bSigma_{+,\text{in}}]_{L_*,L_*})-2\|\bSigma_{+,\text{out}}\|
\leq 
\sigma_{d}([T(\bSigma)]_{L_*,L_*}).
\]
The second multiplicative term in the RHS of \eqref{eq:kappa2nu2}
was already bounded in 
\eqref{eq:sigmaD-d}. Applying this and the above bound to \eqref{eq:kappa2nu2} concludes the desired bound in \eqref{eq:bound_of_T_for_lemma6}. 

\textbf{Conclusion of the proof that $\boldsymbol{\hat{\kappa}_2(T(\bSigma))\leq}$max($\boldsymbol{\hat{\kappa}_2(\bSigma)}$,7): } We first note that \eqref{eq:kappa2denominator} implies 
\begin{align}\label{eq:bound_with_3max}
\hat{\kappa}_2(T(\bSigma))
=\frac{\|[T(\bSigma)]_{L_*^{\perp},L_*^{\perp}}\|}{\sigma_D(T(\bSigma))}
\leq 
3\max\left(
\frac{\|[T(\bSigma)]_{L_*^{\perp},L_*^{\perp}}\|}{\sigma_{D-d}([T(\bSigma)]_{L_*^{\perp},L_*^{\perp}})}\,,\,\frac{\|[T(\bSigma)]_{L_*^{\perp},L_*^{\perp}}\|}{\sigma_d([T(\bSigma)]_{L_*,L_*})}\right).
\end{align}
We bound the 
first argument of the  maximum in \eqref{eq:bound_with_3max}, using  \eqref{eq:3rd_bound_lemma8_b}, and the 
second argument  using \eqref{eq:lemmae}, where we recall we verified that $\nu>2$.

To further bound the RHS of \eqref{eq:3rd_bound_lemma8_b}, we note that for 
$a={4}\|\bSigma_{+,\text{out}}\|/{(\nu-2)}$ 
and 
$b=\gamma \bar{\sigma}_{\text{tail}}(\bSigma_+)$,
$(a+b)/(-a+b)\equiv 1+2((b/a)-1)^{-1}$ can be bounded
using a lower bound of $b$ and an upper bound of $a$, while making sure that the lower bound for $b$ is larger than the upper bound of $a$, so the overall bound is positive and finite.
In order to bound $b$ from below, we apply \eqref{eq:xout3} and the definition of $\calS$:
\[
\bar{\sigma}_{\text{tail}}(\bSigma_+)\geq \bar{\sigma}_{\text{tail}}\Big(\sigma_D(\bSigma)\sum_{\bx\in\calX}\frac{\bx\bx^\top}{\|\bx\|^2}\Big)=\sigma_D(\bSigma)\bar{\sigma}_{\text{tail}}\Big(\sum_{\bx\in\calX}\frac{\bx\bx^\top}{\|\bx\|^2}\Big) = \sigma_D(\bSigma)\calS.
\]
To upper bound $a$, we apply the first inequality in \eqref{eq:xout2}, which we write as  $\|\bSigma_{+,\text{out}}\|\leq\|\bSigma_{L_*^{\perp},L_*^{\perp}}\|\,\tilde{\calA}$. 
At last we verify the above mentioned necessary condition of the bounds. Applying the bounds $\nu \geq 4$, the bound $ \hat{\kappa}_2(\bSigma) \leq \max(\kappa_2,7) \leq 7 \kappa_2$ (see  \eqref{eq:overiterations}), and the inequality $\nu\geq 2C\frac{\kappa_2\tilde{\calA}}{\gamma \calS}$ 
(already explained when justifying the first inequality in \eqref{eq:kappa2nu}), results in 
\begin{equation}\label{eq:kappa2_bound}
\frac{4}{\nu-2}\hat{\kappa}_2(\bSigma)\tilde{\calA}\leq \frac{8}{\nu}\hat{\kappa}_2(\bSigma)\tilde{\calA} \leq \frac{8}{\nu}\max(\kappa_2,7)\tilde{\calA} \leq  \frac{28}{C}\gamma\calS.
\end{equation}
Combining   \eqref{eq:kappa2_bound} and $C \geq 28$, we verify the necessary requirement on the suggested bounds of $a$ and $b$:
\begin{equation}
\label{eq:check_a_b_ratio}
\frac{\text{above lower bound of } \gamma \bar{\sigma}_{\text{tail}}(\bSigma_+)}{\text{upper bound of } \frac{4}{\nu-2}\|\bSigma_{+,\text{out}}\|} =\frac{\gamma \sigma_{D}(\bSigma)\calS}{\frac{4}{\nu-2}\|\bSigma_{L_*^\perp,L_*^\perp}\|\tilde{\calA}}=\frac{\gamma\calS}{\frac{4}{\nu-2}\hat{\kappa}_2(\bSigma)\tilde{\calA}}\geq \frac{C}{28} \geq 1.    
\end{equation}

Applying both \eqref{eq:lemmae} and \eqref{eq:3rd_bound_lemma8_b} 
 to  \eqref{eq:bound_with_3max}, then the definition of $\hat{\kappa}_2(\bSigma)$, and at last  \eqref{eq:check_a_b_ratio} results in 
\begin{align}\label{eq:est_kappa2}
&\hat{\kappa}_2(T(\bSigma))
\leq 
3\, \max\left(\frac{\frac{4}{\nu-2}\|\bSigma_{L_*^{\perp},L_*^{\perp}}\|\tilde{\calA} +\gamma \sigma_D(\bSigma)\calS}{-\frac{4}{\nu-2}\|\bSigma_{L_*^{\perp},L_*^{\perp}}\|\tilde{\calA} +\gamma \sigma_D(\bSigma)\calS},\frac{2}{\nu-2}\right)
\\
&= 
 3 \, \max\left(\frac{\frac{4}{\nu-2}\hat{\kappa}_2(\bSigma)\tilde{\calA} +\gamma\calS}{-\frac{4}{\nu-2}\hat{\kappa}_2(\bSigma)\tilde{\calA} +\gamma \calS},\frac{2}{\nu-2}\right)\nonumber
 \leq 
3\max\Big(\frac{28/C+1}{-28/C+1},\frac{2}{\nu-2}\Big).
\end{align}
The RHS of \eqref{eq:est_kappa2} can be 
further bounded using the estimate $\nu \geq C$ (so  $\nu$ can be replaced by $C$). Applying the bound  $C\geq 70$ the resulting RHS is bounded by 7 and consequently by $\max(7,\kappa_2)$. Therefore, the desired bound and the proof of the theorem are concluded. \qed


\subsection{Main Ideas of the Proof of Theorem~\ref{thm:noisy}}
\label{sec:proof_thm_noisy_prelim}
The proof largely follows the proof of Theorem~\ref{thm:main}, with a few necessary modifications. In 
Section~\ref{sec:reduce_noisy-thm} we reduce Theorem~\ref{thm:noisy} to verifying a technical property, stated in \eqref{eq:overiterations_noisy}. 
In Section~\ref{sec:proof_idea_or_overiter_noisy}, we  explain, on a non-technical level, the basic idea behind proving \eqref{eq:overiterations_noisy} in light of the proof of \eqref{eq:overiterations}.   
We complete the proof in Section~\ref{sec:proof_thm_noisy_prelim}, where we also specify choices for the different constants, which we refer to here.  

\subsubsection{Reduction of the Theorem}
\label{sec:reduce_noisy-thm}
We first introduce some notation and definitions. We define $\phi:S_{+}(d) \rightarrow S_{+}(d)$:
\begin{equation}
\phi(\bA)=\bSigma_{\text{in},*}^{-0.5}\bA\,\bSigma_{\text{in},*}^{-0.5} \ \text{ for } \ \bA \in S_{+}(d),
\label{eq:def_phi}    
\end{equation}
and consequently rewrite $\hat{\kappa}_1$ (originally defined in \eqref{eq:def_kappa_1_2} using the function $h$ defined in \eqref{eq:def_f}): 
\begin{equation}
\hat{\kappa}_1(\bSigma)={\sigma_d(\phi([g_1(\bSigma)]_{L_*,L_*}))}/{\sigma_1\left(\bSigma_{L_*^\perp,L_*^\perp}\right)}.
\label{eq:def_kappa_1_2_new}  
\end{equation}
We further define 
\begin{equation}\label{eq:def_kappa_3_new} \hat{\kappa}_3(\bSigma)={\sigma_1(\phi\left(\bSigma_{L_*,L_*}\right))}/{\sigma_d(\phi([g_1(\bSigma)]_{L_*,L_*}))}.\end{equation}

We note that for p.s.d. matrices $\bX\in\mathbb{R}^{d\times d}$
\begin{equation}\label{eq:phi_relation}\sigma_1(\bSigma_{\text{in},*})^{-1} \bX\psdleq \phi(\bX)\psdleq \sigma_d(\bSigma_{\text{in},*})^{-1}\bX\end{equation}
and thus  
$
{\kappa}_3 / {\kappa_{\text{in},*}} \leq \hat{\kappa}_3(\bSigma^{(0)})\leq \kappa_{\text{in},*}\cdot{\kappa}_3$. 
We remark that we use this more complicated version of $\hat{\kappa}_3(\bSigma)$ (instead of replacing  $\phi(\bSigma)$ by $\bSigma$ and thus obtaining that  $\kappa_3 = \hat{\kappa}_3(\bSigma^{(0)})$) in order to simplify the proof. On the other hand, we prefer leaving $\kappa_3$ in the main presentation (instead of replacing $\bSigma$ by $\phi(\bSigma)$) since it is more intuitive and easier to understand. We summarize the following relationships, which we refer to throughout the proof, were we recall that $\hat{\kappa}_2$ was defined in \eqref{eq:def_kappa_1_2}:
\begin{align}\label{eq:init_noisy}\hat{\kappa}_1(\bSigma^{(0)})= {\kappa}_1,\hat{\kappa}_2(\bSigma^{(0)})= {\kappa}_2,\hat{\kappa}_3(\bSigma^{(0)})\leq \kappa_{\text{in},*} \cdot {\kappa}_3.\end{align}

Next, we formulate the property we reduce the theorem to. It uses the above constants $\kappa_2$ and $\kappa_3$, 
$C_0$ defined in \eqref{eq:C_value}, $\tilde{\kappa}_1$ defined in \eqref{eq:tildekappa1}, and the constants  $C_{\kappa_1}$, $C_{\kappa_2}$ and $C_{\kappa_3}$, specified in \eqref{eq:C_kappa_2}-\eqref{eq:C_kappa_1}.  This property replaces \eqref{eq:overiterations} in the proof of Theorem~\ref{thm:main} as follows:   
\begin{align}\label{eq:overiterations_noisy}
\text{If }  C_{\kappa_1}/\sigma_1(\bSigma_{\text{in},*})\geq \hat{\kappa}_1(\bSigma)\geq \tilde{\kappa}_1,  \hat{\kappa}_2(\bSigma)\leq C_{\kappa_2},    &\text{ and }  
\hat{\kappa}_3(\bSigma)\leq C_{\kappa_3}, \\
\text{ then }  
\hat{\kappa}_1(T(\bSigma))\geq C_0\hat{\kappa}_1(\bSigma), \hat{\kappa}_2(T(\bSigma))\leq C_{\kappa_2},    &\text{ and } \hat{\kappa}_3(T(\bSigma))\leq C_{\kappa_3}.\nonumber \end{align}
That is, $\hat{\kappa}_2$ and $\hat{\kappa}_3$ are uniformly bounded in all iterations, but $\hat{\kappa}_1$ increases at each iteration  
by a factor of at least $C_0$, until it exceeds $C_{\kappa_1}/\sigma_1(\bSigma_{\text{in},*})$. 
In particular, 
\begin{equation}
\label{eq:property_k0}
\text{there exists } k_0\leq \log_{C_0}({C_{\kappa_1}}/\sigma_1(\bSigma_{\text{in},*})\hat{\kappa}_1^{(0)}) \text{ such that }  \hat{\kappa}_1(\bSigma^{(k_0)})>C_{\kappa_1}/\sigma_1(\bSigma_{\text{in},*}). 
\end{equation}

Lastly, we show that   \eqref{eq:overiterations_noisy} indeed implies the theorem. The upper bound of $k_0$ follows from  \eqref{eq:hatkappa1_relation} and \eqref{eq:property_k0}. 
Applying the equivalent definition of $\hat{\kappa}_1(\bSigma)$ in \eqref{eq:def_kappa_1_2_new},  the second Loewner order in \eqref{eq:phi_relation}, and at last the definition of $g_1$ in \eqref{eq:defg1}, which implies that $[g_1(\bSigma)]_{L_*,L_*}\psdleq \bSigma_{L_*,L_*}$ and $[g_1(\bSigma)]_{L_*,L_*}$ is the only nonzero block of $g_1(\bSigma)$, we 
obtain 
\begin{equation}
\label{eq:bound_k1_hat}
{\sigma_d(\bSigma_{\text{in},*})}\hat{\kappa}_1(\bSigma) = 
{\sigma_d(\bSigma_{\text{in},*})}\frac{\sigma_d(\phi([g_1(\bSigma)]_{L_*,L_*}))}{\sigma_1(\bSigma_{L_*^{\perp},L_*^{\perp}})}
\leq \frac{\sigma_d(g_1(\bSigma))}{\sigma_1(\bSigma_{L_*^{\perp},L_*^{\perp}})}\leq \frac{\sigma_d\left(\bSigma_{L_*,L_*}\right)}{\sigma_1(\bSigma_{L_*^{\perp},L_*^{\perp}})}. 
\end{equation}
We comment that in the latter two inequalities we 
used the fact that for symmetric $k \times k$ matrices satisfying $\bA \psdleq \bB$, $\sigma_i(\bA) \leq \sigma_i(\bB)$, $1\leq i \leq k$. We note that this fact directly follows from the Courant-Fischer min-max theorem~\cite{tao2012topics}[Proposition 1.3.2]. 
Lastly, applying 
Lemma~\ref{lemma:subspace}, then \eqref{eq:bound_k1_hat}, and finally the last inequality in \eqref{eq:property_k0} yields
\[
\sin\angle({L}^{(k_0)},L_*)\leq 2 \sqrt{\frac{\sigma_1(\bSigma^{(k_0)}_{L_*^\perp,L_*^\perp})}{\sigma_d(\bSigma^{(k_0)}_{L_*,L_*})}}\leq 2 \sqrt{\frac{1}{{\sigma_d(\bSigma_{\text{in},*})}\hat{\kappa}_1(\bSigma^{(k_0)})}}\leq 2 \sqrt{\frac{\sigma_1(\bSigma_{\text{in},*})}{{\sigma_d(\bSigma_{\text{in},*})}C_{\kappa_1}}}.
\]
This equation and the definition of $\kappa_{\text{in},*}$ in \eqref{eq:defkinstar} result in the desired estimate.  
\subsubsection{Guidance to the Rest of the  Proof}\label{sec:proof_idea_or_overiter_noisy}

The proof of \eqref{eq:overiterations_noisy} mostly follows the proof of \eqref{eq:overiterations}, which decomposes $\bSigma_{+}=\sum_{\bx\in\calX}{\bx\bx^\top}/{\bx^\top\bSigma^{-1}\bx}$ into 
$\bSigma_{+,\text{in}}$ and $\bSigma_{+,\text{out}}$ (see \eqref{eq:Sigma+}). 
We recall that the major idea of the proof of \eqref{eq:overiterations} is based on two properties of this decomposition. First, $\bSigma_{+,\text{in}}$ is a p.s.d.~matrix whose range lies in $L_*$. Second, the lower bound on $\nu = \sigma_d(\bSigma_{+,\text{in}}) / |\bSigma_{+,\text{out}}|$, established in \eqref{eq:est_nu}, ensures that $\bSigma_{+,\text{out}}$ is ``small” relative to $\bSigma_{+,\text{in}}$. However, we cannot directly apply the proof of \eqref{eq:overiterations} to conclude \eqref{eq:overiterations_noisy}, since the first property does not hold in the current case.

To adapt the proof of \eqref{eq:overiterations}, we apply a slightly different decomposition:
\[
\bSigma_{\diamond,\text{in}}=\begin{pmatrix}
[\bSigma_{+,\text{in}}]_{L_*,L_*} & 0\\
0 & 0
\end{pmatrix},\qquad
\bSigma_{\diamond,\text{out}}=\bSigma_+-\bSigma_{\diamond,\text{in}}.
\]
Since $\bSigma_{\diamond,\text{in}} \in S_{+}(D)$ is with range contained in $L_*$, the first property mentioned above holds. 
We establish the second property by showing that 
largest eigenvalue of $\bSigma_{\diamond,\text{out}}$ is controlled by the smallest eigenvalue of  $\bSigma_{\diamond,\text{in}}$. 
Our proof of this property relies on Lemma~\ref{lemma:xplus_noisy}, whose statement and proof are given in Section~\ref{sec:proof_lemma_10}. 
This lemma adapts parts (a) and (b) of Lemma~\ref{lemma:xplus} to the noisy setting. 
We remark that part (c) of the latter lemma, as well as all other preliminary lemmas in Section~\ref{sec:tech_lemma_thm1} except Lemma~\ref{lemma:bound_sigma_out}, do not involve $\bSigma_{+,\text{in}}$ or $\bSigma_{+,\text{out}}$ and thus remain unchanged. 
Moreover, a careful examination of the proof of Lemma~\ref{lemma:bound_sigma_out} shows that it still holds when $\bSigma_{+,\text{in}}$ and $\bSigma_{+,\text{out}}$ are replaced with their noisy counterparts $\bSigma_{\diamond,\text{in}}$ and $\bSigma_{\diamond,\text{out}}$.


\subsection{Proof of Theorem~\ref{thm:haystack}}
\label{sec:thm_haystack}
We will prove the following claims that imply   Theorem~\ref{thm:haystack}.
\begin{itemize}
\item TME+STE recovers $L_*$ under a general model where the inliers lie on the $d$-subspace $L_*$ and both $\calX_{\text{in}}$ and $\{\bU_{L_*^\perp}^\top\bx : \bx \in \calX_{\text{out}}\}$ are in general position (i.e., any $k$-subspace contains at most $k$ points), provided either ($N \geq D$ and $\dssnr>1$) or ($N > D$ and $\dssnr=1$). 
This statement clearly holds with probability 1 for  data sampled by the generalized haystack model.

\item  Under the asymptotic generalized haystack model, TME+STE recovers $L_*$ if  $\eta \leq \dssnr< 1$.

\item 
Under the latter model,  
TME cannot recover $L_*$ if  $\bSigma^{(\text{out})}_{L_*,L_*^\perp}\neq 0$ and $\dssnr< 1$.  

\end{itemize}

Below we verify in order these three claims. 

\textbf{TME+STE recovers $L_*$ when $\dssnr \geq 1$: }
If $\dssnr> 1$ and $N \geq D$, the range of the TME solution,  $\bSigma_*$, is $L_*$ (see Theorem~\ref{thm:zhang16} of \cite{zhang2016robust}). 
This conclusion also holds if $\dssnr= 1$ and $N > D$ (see   \cite{TME_dssnr_1}). 
Therefore in both cases, TME recovers $L_*$. 
We thus note that in these cases   $T_1(\bSigma_*)=\bSigma_*$, and since the rank of $\bSigma_*$ is $d$,    $T_2(\bSigma_*,\gamma)=\bSigma_*$. Consequently,  $T(\bSigma_*)=T_2(T_1(\bSigma_*),\gamma)=\bSigma_*.$ That is, if  $\bSigma^{(0)}=\bSigma_*$, then $\bSigma^{(k)}=\bSigma_*$ for all $k\geq 1$, so   TME+STE recovers $L_*$.

\textbf{TME does not recover $L_*$ if $\bSigma^{(\text{out})}_{L_*,L_*^\perp}\neq 0$ and $\dssnr< 1$:} 
We first state a lemma, which clarifies properties of the TME solution under the generalized haystack model, and is proved in Section~\ref{sec:proof_lemmas}.  Its first part is used in the current case and the second part is used below in the last case. We recall that the TME solutions are uniquely defined up to an arbitrary scaling. 
\begin{lemma}\label{lemma:special}
For data generated by the generalized haystack model and $N \to \infty$, whereas $d$, $D$, $n_1/N$ and $n_0/N$ are fixed, and $\bSigma^{(\text{out})}_{L_*,L_*^\perp}=\bm{0}$, the following two properties hold:\\
(a) Any TME solution $\bSigma_*$ (defined up to scaling) is full-rank and satisfies
\[
[\bSigma_*]_{L_*,L_*^\perp}=\bm{0} \ \text{ and } \ [\bSigma_*]_{L_*^\perp,L_*}=\bm{0}.
\]
(b) There exists $0<c_{\dssnr,0}<1$ such that for $\dssnr\in [c_{\dssnr,0},1)$, any TME solution $\bSigma_*$ satisfies the following conditions:    
$$[\bSigma_*]_{L_*^\perp,L_*^\perp}=C\,\bSigma^{(\text{out})}_{L_*^\perp,L_*^\perp}, \hspace{0.1in}  \ \text{ and } \ \frac{\sigma_d([\bSigma_*]_{L_*,L_*})}{\sigma_1([\bSigma_*]_{L_*^\perp,L_*^\perp}}\geq \frac{c_{\dssnr}}{1-\dssnr},
$$
where $C$ is an arbitrary constant (since $\bSigma_*$ is arbitrarily scaled), and 
$C_1, c_0$, and $c_{\dssnr}$ are constants depending on the condition numbers of $\bSigma^{(\text{out})}$ and $\bSigma^{(\text{in})}$.
\end{lemma}

We let $\bA_0=\bSigma^{(\text{out})}_{L_*,L_*^\perp}(\bSigma^{(\text{out})}_{L_*^\perp,L_*^\perp})^{-1}$, \[
\bA=\begin{pmatrix}
\bI_{d\times d} &\hspace{0.1in} -\bA_0 \\
\bm{0}_{(D-d)\times d} &\hspace{0.1in} \bI_{(D-d)\times (D-d)} 
\end{pmatrix}, 
\text{ so } \
\bA^{-1}=\begin{pmatrix}
\bI_{d\times d} &\hspace{0.1in} \bA_0 \\
\bm{0}_{(D-d)\times d} &\hspace{0.1in} \bI_{(D-d)\times (D-d)} 
\end{pmatrix},
\]
and $\tilde{\calX}=\{\bA\bx:\bx\in\calX\}$. We view $\tilde{\calX}$ as a set of inliers and outliers, with  ``inlying covariance'' $\tilde{\bSigma}^{(\text{in})}=\bA\bSigma^{(\text{in})}\bA^\top$ and ``outlying covariance''  $\tilde{\bSigma}^{(\text{out})}=\bA\bSigma^{(\text{out})}\bA^\top$. We note that  
\begin{equation}\label{eq:A_transform}
\bA\bSigma\bA^\top = \begin{pmatrix}
\bSigma_{L_*,L_*}-\bSigma_{L_*,L_*^\perp}\bA_0^\top+\bA_0\bSigma_{L_*^\perp,L_*} -\bA_0\bSigma_{L_*^\perp,L_*^\perp}\bA_0^\top &\hspace{0.2in} \bSigma_{L_*,L_*^\perp}-\bA_0\bSigma_{L_*^\perp,L_*^\perp}  \\
\bSigma_{L_*^\perp,L_*}-\bSigma_{L_*^\perp,L_*^\perp}\bA_0^\top &\hspace{0.2in} \bSigma_{L_*^\perp,L_*^\perp} 
\end{pmatrix}.
\end{equation}
Applying  $\tilde{\bSigma}^{(\text{out})}=\bA{\bSigma}^{(\text{out})}\bA^\top$ and the above definition of $\bA_0$ results in  
\begin{equation}\label{eq:TME_diagonal}\text{$\tilde{\bSigma}^{(\text{out})}_{L_*,L_*^\perp}=\bm{0}_{d\times (D-d)}$ and $\tilde{\bSigma}^{(\text{out})}_{L_*^\perp,L_*}=\bm{0}_{(D-d)\times d}$.}\end{equation}

Applying Lemma~\ref{lemma:special}(a),  the  TME solution for the set $\tilde{\calX}$, denoted by $\tilde{\bSigma}_*$, also has a block diagonal structure, that is, $[\tilde{\bSigma}_*]_{L_*,L_*^\perp}=\bm{0}_{d\times (D-d)}$ and $[\tilde{\bSigma}_*]_{L_*^\perp,L_*}=\bm{0}_{(D-d)\times d}$.

The $\bA$-equivariance property of TME (discussed later in \eqref{eq:tme_equivariance}) implies ${\bSigma}_*=\bA^{-1}\tilde{\bSigma}_*\bA^{-\top}$, and a similar calculation to \eqref{eq:A_transform} shows that $[{\bSigma}_*]_{L_*,L_*^\perp}=[\tilde{\bSigma}_*]_{L_*,L_*^\perp}+\bA_0[{\bSigma}_*]_{L_*^\perp,L_*^\perp}=\bA_0[{\bSigma}_*]_{L_*^\perp,L_*^\perp}$. We note that $\bA_0[{\bSigma}_*]_{L_*^\perp,L_*^\perp}$ is nonzero since Lemma~\ref{lemma:special}(a) implies that $\tilde{\bSigma}_*$ is full rank and the assumption  $\bSigma^{(\text{out})}_{L_*^\perp,L_*}\neq 0$ implies that  $\bA_0$ is nonzero.

It is easy to see that when the  top $d$ eigenvectors of $\bSigma$ span $L_*$, then $\bSigma_{L_*,L_*^\perp}=\bm{0}$. As a result, the top $d$ eigenvectors of  ${\bSigma}_*$ do not span $L_*$.

\textbf{TME+STE recovers $L_*$ when $\eta \leq \dssnr< 1$.}

Applying Lemma~\ref{lemma:special}(b) to the $\bA$-transformed solution, $\tilde{\bSigma}_*$, results in  
\begin{equation}
{\sigma_d([\tilde{\bSigma}_*]_{L_*,L_*})}/{\sigma_1([\tilde{\bSigma}_*]_{L_*^\perp,L_*^\perp})}\geq O\left(({1-\dssnr})^{-1}\right).
\label{eq:bound_cond_dssnr} 
\end{equation}
Applying  ${\bSigma}_*=\bA^{-1}\tilde{\bSigma}_*(\bA^{-1})^\top$ and \eqref{eq:TME_diagonal}, then following a similar calculation to \eqref{eq:A_transform}, and at last the definition of $g_1$ yields
\begin{align*}g_1({\bSigma}_*)=&g_1\left(\bA^{-1}\begin{pmatrix}
[\tilde{\bSigma}_*]_{L_*,L_*} &\bm{0}_{d\times (D-d)} \\
\bm{0}_{(D-d)\times d} & [\tilde{\bSigma}_*]_{L_*^\perp,L_*^\perp} 
\end{pmatrix}(\bA^{-1})^\top\right)\\=&g_1\begin{pmatrix}
[\tilde{\bSigma}_*]_{L_*,L_*}+\bA_0[\tilde{\bSigma}_*]_{L_*^\perp,L_*^\perp}\bA_0^\top &\hspace{0.2in}[\tilde{\bSigma}_*]_{L_*^\perp,L_*^\perp}\bA_0^\top\\
\bA_0[\tilde{\bSigma}_*]_{L_*^\perp,L_*^\perp}&\hspace{0.2in} [\tilde{\bSigma}_*]_{L_*^\perp,L_*^\perp}
\end{pmatrix}=[\tilde{\bSigma}_*]_{L_*,L_*}.\end{align*}
Combining this formula and the intermediate result from above that $[\tilde{\bSigma}_*]_{L_*^\perp,L_*^\perp}=[{\bSigma}_*]_{L_*^\perp,L_*^\perp}$  results in
\begin{equation}
\label{eq:bound_cond_dssnr_no}
\kappa_1=\frac{\sigma_d(g_1(\bSigma_*))}{\sigma_1([\bSigma_*]_{L_*^\perp,L_*^\perp})}
=\frac{\sigma_d([\tilde{\bSigma}_*]_{L_*,L_*})}{\sigma_1([\tilde{\bSigma}_*]_{L_*^\perp,L_*^\perp})}\geq \frac{c_{\dssnr}}{1-\dssnr}. 
\end{equation}

Similarly, the bound $\sigma_D({\bSigma}_*)=\sigma_D(\bA^{-1}\tilde{\bSigma}_*(\bA^{-1})^\top)\geq \sigma_D^2(\bA^{-1})\sigma_D(\tilde{\bSigma}_*)=\sigma_D(\tilde{\bSigma}_*)/\|\bA\|^2$ and the first part of Lemma~\ref{lemma:special}(b) (applied to the $\bA$-transformed solution  $\tilde{\bSigma}_*$) imply
\begin{align}\label{eq:kappa22}
\kappa_2=&\frac{\sigma_1([\bSigma_*]_{L_*^\perp,L_*^\perp})}{\sigma_D(\bSigma_*)}\leq \|\bA\|^2\frac{\sigma_1([\tilde{\bSigma}_*]_{L_*^\perp,L_*^\perp})}{\sigma_D(\tilde{\bSigma}_*)}=\|\bA\|^2\frac{\sigma_1([\tilde{\bSigma}_*]_{L_*^\perp,L_*^\perp})}{\min(\sigma_d([\tilde{\bSigma}_*]_{L_*,L_*}),\sigma_{D-d}([\tilde{\bSigma}_*]_{L_*^\perp,L_*^\perp}))}\\
\leq &\|\bA\|^2\max\left(\frac{\sigma_1(\tilde{\bSigma}^{(\text{out})}_{L_*^\perp,L_*^\perp})}{\sigma_{D-d}(\tilde{\bSigma}^{(\text{out})}_{L_*^\perp,L_*^\perp})}, \frac{1-\dssnr}{c_{\dssnr}}\right).\nonumber
\end{align}
We note that the above formulas, the upper bounds of $\|\bA\|$ and $\|\bA^{-1}\|$ by
\[
\max(\|\bA\|,\|\bA^{-1}\|)\leq \|\bI_{D\times D}\|+\left\|\bSigma^{(\text{out})}_{L_*,L_*^\perp}\left(\bSigma^{(\text{out})}_{L_*^\perp,L_*^\perp}\right)^{-1}\right\|\leq 1+\kappa_{\text{out}},
\]  and 
\eqref{eq:bound_cond_dssnr} imply that $\kappa_1$ converges to $\infty$ as $\dssnr\rightarrow 1$, whereas $\kappa_2$ 
is bounded.

It remains to investigate the RHS of \eqref{eq:kappa1}, that is,  \[\frac{d\kappa_{\text{in},*}\tilde{\calA}}{n_1}\!\left(\!\kappa_{\text{in},*}\!+\!\frac{\tilde{\calA}}{\frac{n_1}{d}\!-\!\gamma\frac{n_0}{D-d}}\!+\!\frac{\kappa_2\tilde{\calA}}{\gamma \calS}(1+\kappa_{\text{in},*})\!\right)
\!=\!\kappa_{\text{in},*}\frac{\tilde{\calA}d}{n_1}\!\left(\!\kappa_{\text{in},*}\!+\!\frac{\frac{\tilde{\calA}d}{n_1}}{1\!-\!\gamma/\dssnr}+\frac{\tilde{\calA}}{\calS}\frac{\kappa_2}{\gamma}(1+\kappa_{\text{in},*})\!\right).\]
By the analysis of the generalized haystack model in Section~\ref{sec:models},  ${\tilde{\calA}}/{\calS}$ and $\tilde{\calA}d/{n_1}$ have upper bounds that only depend on  $\kappa_{\text{in},*}$, 
 $\kappa_{\text{out}}$, $\lambda$ and $\dssnr$ as follows:
\[
\frac{\tilde{\calA}}{\calS}\leq \frac{C_{\mu_{\text{out}},u}}{C_{\mu_{\text{out}},l}}\frac{D}{D-d-2}\leq \sqrt{\kappa_{\text{out}}}\frac{D}{D-d-2} \leq \frac{\sqrt{\kappa_{\text{out}}}}{\lambda},
\]
where  the first inequality applies $\tilde{\calA}=\frac{n_0}{D-d}{\calA}$, \eqref{eq:calA3_2} and \eqref{eq:calS1}, 
$C_{\mu_{\text{out}},l}$ and $C_{\mu_{\text{out}},u}$ are defined in \eqref{eq:Cmu}, and the second inequality applies \eqref{eq:Cmu_ratio}. Also, applying \eqref{eq:calA3_2} and \eqref{eq:Cmu2} we have
\[
\frac{\tilde{\calA}d}{n_1}\leq \frac{d n_0 C_{\mu_{\text{out}},u} \frac{1}{D-d-2}}{n_1} \leq \dssnr  \frac{C_{\mu_{\text{out}},u} (D-d)}{(D-d-2)}\leq \dssnr\frac{\sqrt{\kappa_{\text{out}}} (D-d)}{(D-d-2)}\leq 3\dssnr\sqrt{\kappa_{\text{out}}},
\]
where the last step follows from the assumption $D-d>2$, so $D-d\geq 3$ and $\frac{D-d}{D-d-2}\leq 3$.

In addition, $\kappa_2$ is also bounded above using \eqref{eq:kappa22}. Since $\dssnr>\eta$ and $\gamma<\eta-c_0$, we have $1-\gamma/\dssnr \geq 1-(\eta-c_0)/\eta= c_0/\eta\geq c_0$, so the RHS of \eqref{eq:kappa1} has an upper bound that depends on $\kappa_{\text{in},*}, \kappa_{\text{out}}, \lambda,  \dssnr$ and remains bounded above as $\dssnr\rightarrow 1$.

Recall that $\kappa_{\text{in},*}=\kappa_{\text{in}}$, and the LHS of \eqref{eq:kappa1} converges to $\infty$ as $\dssnr\rightarrow 1$. As a result,  there exists $\eta:=\eta(\kappa_{\text{in}},\kappa_{\text{out}},c_0,\lambda)<1$ such that when $\dssnr>\eta$, \eqref{eq:kappa1} holds and TME+STE recovers $L_*$.

\section*{Acknowledgments}
GL was  supported by NSF award DMS-2152766  and TZ was supported by NSF award DMS-2318926.

\bibliographystyle{imsart-number} 
\bibliography{RSR-ref}       

\newpage
\begin{supplement}

\stitle{}

\sdescription{}

We provide additional details for our manuscript. For consistency, we maintain the same order of pages, equations and sections as in the original manuscripts. In particular, we start with Section~\ref{sec:calculate_kappa1}.

\section{Discussion on Auxiliary Parameters and Quantifiers}
\label{sec:calculate_kappa1} 
\subsection{Verifying the Estimates for $\kappa_1$, $\kappa_2$ and $\kappa_3$ under the Model in \eqref{eq:STE_init}} 
We first verify the formula for $\kappa_1$ stated in \eqref{eq:kappa1_angle}. We use the principal angles between $\hat{L}$ and $L_*$ (see Sect.~3.2.1 of \cite{lp_recovery_part1_11}), where we recall that if $W_* \in O(D,d)$ spans $L_*$ and $\hat{W} \in O(D,d)$ spans $\hat{L}$, then these  principal angles, denoted by $\{\theta_i\}_{i=1}^d$, are the singular values of $W_*^\top \hat{W}$, given in reverse order. The corresponding principal vectors  $\{\bu_i\}_{i=1}^d$ of $L_*$ and  $\{\bv_i\}_{i=1}^d$ of  $\hat{L}$ are the left and right singular vectors of $W_*^\top \hat{W}$, given in reverse order. Let $d'\leq d$ be the largest index such that $\theta_{d'}>0$ (equivalently, $d'=d-\dim(\hat{L}\cap L_*)$), and for $1\leq i\leq d'$, write $\bv_i=\cos\theta_i\bu_i+\sin\theta_i\bw_i$, where $\bw_i\in L_*^\perp$ (recall that $\bu_i\in L_*$). In addition, for indices $d'+1 \leq i \leq d$, we have $\bv_i = \bu_i$. We express $\bSigma^{(0)}$ in a basis partitioned into four orthogonal blocks: $\{\bu_1,\cdots,\bu_{d'}\}$, $\{\bu_{d'+1},\cdots,\bu_d\}=\{\bv_{d'+1},\cdots,\bv_d\}$, $\{\bw_1,\cdots,\bw_{d'}\}$, and $(L_*\oplus \hat{L})^\perp$:
\begin{align*}
&\bSigma^{(0)}=\sum_{i=1}^d\bv_i\bv_i^\top+\alpha\bI\\=&\begin{pmatrix}
\mathrm{Diag}(\{\cos^2\theta_i +\alpha\}_{i=1}^{d'})& 0 & \mathrm{Diag}(\{\cos\theta_i\sin\theta_i\}_{i=1}^{d'})& 0 \\
0& (1+\alpha) \bI_{d-d'}&0&0\\
\mathrm{Diag}(\{\cos\theta_i\sin\theta_i\}_{i=1}^{d'}) & 0&\mathrm{Diag}(\{\sin^2\theta_i +\alpha\}_{i=1}^{d'}) & 0\\
0&0&0&\alpha\Pi_{(L_*\oplus \hat{L})^\perp}
\end{pmatrix}.
\end{align*}
Since $\{\bu_i\}_{i=1}^d$ is a basis of $L_*$ and is represented by the first two blocks in the expression above, 
$\bSigma^{(0)}_{L_*,L_*} = \mathrm{Diag}(\{\cos^2\theta_i +\alpha\}_{i=1}^d)$, 
$\bSigma^{(0)}_{L_*^\perp,L_*} = (\bSigma^{(0)}_{L_*,L_*^\perp})^{\top}
=
(\mathrm{Diag}(\{\cos\theta_i \sin \theta_i +\alpha\}_{i=1}^d),0)$ (note that $\theta_i=0$ for $d'+1 \leq i \leq d$),  and 
$$
\bSigma^{(0)}_{L_*^\perp,L_*^\perp} = \begin{pmatrix}
& \mathrm{Diag}(\{\sin^2\theta_i +\alpha\}_{i=1}^d) & 0\\
&0&\alpha\Pi_{(L_*\oplus \hat{L})^\perp}
\end{pmatrix}.
$$
Consequently, we can represent  $\bSigma^{(0)}_{L_*,L_*}-\bSigma^{(0)}_{L_*,L_*^\perp}\bSigma^{(0)\,-1}_{L_*^\perp,L_*^\perp}\bSigma^{(0)}_{L_*^\perp,L_*}$ in the basis  $\{\bu_i\}_{i=1}^d$  of $L_*$:
\begin{align*}
\bSigma^{(0)}_{L_*,L_*}-\bSigma^{(0)}_{L_*,L_*^\perp}\bSigma^{(0)\,-1}_{L_*^\perp,L_*^\perp}\bSigma^{(0)}_{L_*^\perp,L_*}=&\mathrm{Diag}\Big(\Big\{\cos^2\theta_i +\alpha - \frac{(\cos\theta_i\sin\theta_i )^2}{\sin^2\theta_i +\alpha}\Big\}_{i=1}^d\Big)\\=&\mathrm{Diag}\Big(\Big\{\frac{\alpha+\alpha^2}{\sin^2\theta_i +\alpha}\Big\}_{i=1}^d\Big),
\end{align*}
so
\begin{equation}
\label{eq:estimate_sigmad_difference}    
\sigma_d\Big(\bSigma^{(0)}_{L_*,L_*}-\bSigma^{(0)}_{L_*,L_*^\perp}\bSigma^{(0)\,-1}_{L_*^\perp,L_*^\perp}\bSigma^{(0)}_{L_*^\perp,L_*}\Big)=\frac{\alpha+\alpha^2}{\sin^2\theta_1 +\alpha}.
\end{equation}
Combining it with $\sigma_1\Big(\bSigma^{(0)}_{L_*^\perp,L_*^\perp}\Big)=\sin^2\theta_1 +\alpha$, we conclude \eqref{eq:kappa1_angle}.

Next, we derive a formula for $\kappa_2$ and conclude \eqref{eq:example_k1_second}. We note that $\sigma_D(\bSigma^{(0)})=\alpha$ and $\sigma_1([\bSigma^{(0)}]_{L_*^\perp,L_*^\perp})=\sin^2\theta_1+\alpha$, and thus $\kappa_2={(\sin^2\theta_1+\alpha)}/{\alpha}$. 
This formula implies \eqref{eq:example_k1_second}.

Lastly, we derive a formula for $\kappa_3$ and discuss its implications.
We use \eqref{eq:estimate_sigmad_difference} for the denominator, and note that 
$\sigma_1(\bSigma^{(0)}_{L_*,L_*}) = \cos^2\theta_d +\alpha$. Consequently, in this case
\[\kappa_3={\left(\cos^2\theta_d+\alpha\right)}/{\left(\frac{\alpha+\alpha^2}{\sin^2\theta_1 +\alpha}\right)}\leq {(1+\alpha)}/{\left(\frac{\alpha+\alpha^2}{\sin^2\theta_1 +\alpha}\right)}= 1+\frac{\sin^2\theta_1}{\alpha}.\] 
In particular, if $\sin^2\theta_1\leq \alpha$, then  $\kappa_3\leq 2$.

\subsection{The reason for normalizing $\calA$ by the factor $(D-d)/n_0$}
\label{sec:normalize_A}
We divide by $n_0$ since we sum over all outliers. To realize why we multiply by $D-d$, we first note that $\calA\geq 1$, since
\begin{equation}\label{eq:calA>1}
(D-d)\left\|\sum_{\bx\in\calX_{\text{out}}}\frac{\bx\bx^\top}{\|\bU_{L_*^{\perp}}^\top\bx\|^2}\right\|\geq \tr\left(\sum_{\bx\in\calX_{\text{out}}}\frac{\bU_{L_*^{\perp}}^\top\bx\bx^\top\bU_{L_*^{\perp}}}{\|\bU_{L_*^{\perp}}^\top\bx\|^2}\right)=n_0.
\end{equation}
In the special case where the outliers are i.i.d.~sampled from a spherically symmetric distribution on $L_*^\perp$ and $n_0 \to \infty$, then $$\calA=\frac{D-d}{n_0} \cdot \left\|\sum_{\bx\in\calX_{\text{out}}}\frac{\bU_{L_*^{\perp}}^\top\bx\bx^\top\bU_{L_*^{\perp}}}{\|\bU_{L_*^{\perp}}^\top\bx\|^2}\right\|=\frac{D-d}{n_0} \cdot 
\frac{1}{D-d} \cdot \tr\left(\sum_{\bx\in\calX_{\text{out}}}\frac{\bU_{L_*^{\perp}}^\top\bx\bx^\top\bU_{L_*^{\perp}}}{\|\bU_{L_*^{\perp}}^\top\bx\|^2}\right)=1.$$
While this is a very special case, it reveals a natural scaling. 

\subsection{Establishing the bound $\calR \geq 1$}
\label{sec:R_bound}
We derive the required bound as follows:  
\begin{align*}
&\bar{\sigma}_{\text{tail}}\left(\sum_{\bx\in\calX}\frac{\bx\bx^\top}{\|\bx\|^2}\right)=\frac{1}{D-d}\min_{\bV\in\mathbb{R}^{D\times (D-d)},\bV^\top\bV=\bI}\tr\left(\bV^\top\sum_{\bx\in\calX}\frac{\bx\bx^\top}{\|\bx\|^2}\bV\right)\leq
\\&\frac{1}{D-d}\tr\left(\bP_{L_*^\perp}\sum_{\bx\in\calX}\frac{\bx\bx^\top}{\|\bx\|^2}\bP_{L_*^\perp}\right)\leq \sigma_1\left(\bP_{L_*^\perp}\sum_{\bx\in\calX_{\text{out}}}\frac{\bx\bx^\top}{\|\bx\|^2}\bP_{L_*^\perp}\right)\leq \sigma_1 \left(\sum_{\bx\in\calX_{\text{out}}}\frac{\bx\bx^\top}{\|\bU_{L_*^{\perp}}^\top\bx\|^2}\right),
\end{align*}
where the first equality follows from the extremal partial trace formula~\cite[Proposition 1.3.4]{tao2012topics}, the first inequality from the definition of the minimum, and the last two inequalities from basic properties of $\sigma_1(\cdot) \equiv \|\cdot\|$ (since  $\bP_{L_*^\perp}\sum_{\bx\in\calX}{\bx\bx^\top}/{\|\bx\|^2}\bP_{L_*^\perp}$ is p.s.d.).

\subsection{Estimation of $\calA$ and $\calR$ under the asymptotic generalized haystack model}
\label{sec:proof_example_haystack}

First, we introduce preliminary notation and observations. 
Let $\mu_{\text{out}}$ denote the projection of $N(0,\bSigma^{(\text{out})}/D)$ on the unit sphere, that is, it is the 
 pushforward measure of $N(0,\bSigma^{(\text{out})}/D)$ induced by the projection to the unit sphere in $\reals^D$. 
 Let $\mu_0$ denote the uniform distribution on the unit sphere in $\reals^D$.
 Let $f_{\mu_{\text{out}}}$ and $f_{\mu_0}$ denote the probability density functions of $\mu_{\text{out}}$ and $\mu_0$. 
Furthermore, let   $C_{\mu_{\text{out}},u},C_{\mu_{\text{out}},l}>0$ denote the optimal constants such that
\begin{equation}\label{eq:Cmu}
C_{\mu_{\text{out}},l} f_{\mu_0}\leq f_{\mu_{\text{out}}}\leq C_{\mu_{\text{out}},u} f_{\mu_0}.
\end{equation}

Since $\mu_{\text{out}}$ is the projection of $N(0,\bSigma^{(\text{out})}/D)$ on the unit sphere, its density is maximal at the top eigenvector of $\bSigma^{(\text{out})}/D$, which we denote by $\bw_1$, and it is minimal at the smallest eigenvector of $\bSigma^{(\text{out})}/D$, which we denote by $\bw_D$, and thus 
\begin{align}\frac{C_{\mu_{\text{out}},u}}{C_{\mu_{\text{out}},l}}=\frac{f_{\mu_{\text{out}}}(\bw_1)}{f_{\mu_{\text{out}}}(\bw_D)}
= \frac{\int_0^\infty
\exp\big(-\frac{x^2}{2\sigma_1(\bSigma^{(\text{out})}/D)}\big) d x}{\int_0^\infty
\exp\big(-\frac{x^2}{2\sigma_D(\bSigma^{(\text{out})}/D)}\big) d x}=\sqrt{\frac{\sigma_1(\bSigma^{(\text{out})}/D))}{\sigma_D(\bSigma^{(\text{out})}/D))}}=\sqrt{\kappa_{\text{out}}}.\label{eq:Cmu_ratio}\end{align}
We note that $C_{\mu_{\text{out}},l} \leq 1$. Indeed integrating both sides of the inequality $C_{\mu_{\text{out}},l} f_{\mu_0}\leq f_{\mu_{\text{out}}}$ over the sphere, and noting that each integral is 1, results in this claim. Therefore,
\begin{equation}\label{eq:Cmu2}
C_{\mu_{\text{out}},u} \leq \sqrt{\kappa_{\text{out}}}.
\end{equation}

\textbf{Estimation of $\calA$:} We note that  \begin{equation}\label{eq:calA1}\lim_{n_0\rightarrow\infty}{\calA} =  (D-d)\Big\|\Expect_{\bx\sim N(0,\bSigma^{(\text{out})}/D)}\frac{\bx\bx^\top}{\|\bU_{L_*^{\perp}}^\top\bx\|^2}\Big\|=(D-d)\Big\|\Expect_{\bx\sim \mu_{\text{out}}}\frac{\bx\bx^\top}{\|\bU_{L_*^{\perp}}^\top\bx\|^2}\Big\|.\end{equation}
Indeed, the first equality follows from the definition of $\calA$ and the law of large numbers and the second one follows from the invariance to scaling of ${\bx\bx^\top}/{\|\bU_{L_*^{\perp}}^\top\bx\|^2}$.

Furthermore, the RHS of \eqref{eq:calA1} is bounded as follows (see the explanation below):  
\begin{align}\label{eq:calA2}&\Big\|\Expect_{\bx\sim \mu_{\text{out}}}\frac{\bx\bx^\top}{\|\bU_{L_*^{\perp}}^\top\bx\|^2}\Big\|\leq C_{\mu_{\text{out}},u}\Big\|\Expect_{\bx\sim \mu_0 }\frac{\bx\bx^\top}{\|\bU_{L_*^{\perp}}^\top\bx\|^2}\Big\|=C_{\mu_{\text{out}},u}\Big\|\Expect_{\bx\sim N(0,\bI)}\frac{\bx\bx^\top}{\|\bU_{L_*^{\perp}}^\top\bx\|^2}\Big\|\\=&C_{\mu_{\text{out}},u}\max\Big(\Big\|\Expect_{\bx\sim N(0,\bI)}\frac{\bU_{L_*}^\top\bx\bx^\top\bU_{L_*}}{\|\bU_{L_*^{\perp}}^\top\bx\|^2}\Big\|, \Big\|\Expect_{\bx\sim N(0,\bI)}\frac{\bU_{L_*^{\perp}}^\top\bx\bx^\top\bU_{L_*^{\perp}}}{\|\bU_{L_*^{\perp}}^\top\bx\|^2}\Big\|\Big).\nonumber\end{align}
The first inequality applies \eqref{eq:Cmu}. The next equality replaces $\mu_0$ with $N(0,\bI)$ as the expression is scale invariant. The last equality uses the following fact, which we clarify next:  
$$\Big[\Expect_{\bx\sim N(0,\bI)}\frac{\bx\bx^\top}
{\|\bU_{L_*^{\perp}}^\top\bx\|^2}\Big]_{L_*,L_*^\perp}=\Big[\Expect_{\bx\sim N(0,\bI)}\frac{\bx\bx^\top}{\|\bU_{L_*^{\perp}}^\top\bx\|^2}\Big]_{L_*^\perp,L_*}=0.$$ 
We denote   $\tilde{\bx}=\bP_{L_*}\bx-\bP_{L_*^\perp}\bx$ and note that if  $\bx\sim N(0,\bI)$, then $\tilde{\bx} \sim N(0,\bI)$. As a result,  
\[2\Expect_{\bx\sim N(0,\bI)}\frac{\bx\bx^\top}{\|\bU_{L_*^{\perp}}^\top\bx\|^2}=\Expect_{\bx\sim N(0,\bI)}\Big(\frac{\bx\bx^\top}{\|\bU_{L_*^{\perp}}^\top\bx\|^2}+\frac{\tilde{\bx}\tilde{\bx}^\top}{\|\bU_{L_*^{\perp}}^\top\tilde{\bx}\|^2}\Big)=\Expect_{\bx\sim N(0,\bI)}\frac{\bx\bx^\top+\tilde{\bx}\tilde{\bx}^\top}{\|\bU_{L_*^{\perp}}^\top\bx\|^2}.\] 
We note that $(\bx\bx^\top+\tilde{\bx}\tilde{\bx}^\top)/2=(\bP_{L_*}\bx)(\bP_{L_*}\bx)^\top+(\bP_{L_*^\perp}\bx)(\bP_{L_*^\perp}\bx)^\top$  due to 
the definition of $\tilde{\bx}$ and $\bx=\bP_{L_*}\bx+\bP_{L_*^\perp}\bx$. Consequently, $[(\bP_{L_*}\bx)(\bP_{L_*}\bx)^\top+(\bP_{L_*^\perp}\bx)(\bP_{L_*^\perp}\bx)^\top]_{L_*,L_*^\perp}=0$ and  
$[(\bP_{L_*}\bx)(\bP_{L_*}\bx)^\top+(\bP_{L_*^\perp}\bx)(\bP_{L_*^\perp}\bx)^\top]_{L_*,L_*^\perp}=0$ 
and the last equality of \eqref{eq:calA2} is concluded by noting that $\Expect_{\bx\sim N(0,\bI)}{2\bx\bx^\top}/{\|\bU_{L_*^{\perp}}^\top\bx\|^2}$ is a block diagonal matrix.

Next, we bound the RHS of \eqref{eq:calA2}. 
We first focus on   $\Expect_{\bx\sim N(0,\bI)}{\bU_{L_*^{\perp}}^\top\bx\bx^\top\bU_{L_*^{\perp}}}/{\|\bU_{L_*^{\perp}}^\top\bx\|^2}$. We note that this matrix is  the same as $\Expect_{\bx\sim \mu_{D-d}}\bx\bx^\top$, where $\mu_{D-d}$ denotes the uniform distribution on the unit sphere in $\reals^{D-d}$. We also note that for any  $\bV\in O(D-d)$
\[
\Expect_{\bx\sim \mu_{D-d}}\bx\bx^\top=\Expect_{\bx\sim \mu_{D-d}}(\bV\bx)(\bV\bx)^\top=\bV(\Expect_{\bx\sim \mu_{D-d}}\bx\bx^\top)\bV^\top.
\]
Therefore,  $\Expect_{\bx\sim N(0,\bI)}{\bU_{L_*^{\perp}}^\top\bx\bx^\top\bU_{L_*^{\perp}}}/{\|\bU_{L_*^{\perp}}^\top\bx\|^2}$ is a $(D-d)\times (D-d)$  scalar matrix, and 
\begin{align}\label{eq:calA21}
\left\|\Expect_{\bx\sim N(0,\bI)}\frac{\bU_{L_*^{\perp}}^\top\bx\bx^\top\bU_{L_*^{\perp}}}{\|\bU_{L_*^{\perp}}^\top\bx\|^2}\right\|=\frac{1}{D-d}\Expect_{\bx\sim N(0,\bI)}\tr\left(\frac{\bU_{L_*^{\perp}}^\top\bx\bx^\top\bU_{L_*^{\perp}}}{\|\bU_{L_*^{\perp}}^\top\bx\|^2}\right)=\frac{1}{D-d}.
\end{align}
Since 
$\bU_{L_*}^\top\bx$ and $\bU_{L_*^\perp}^\top\bx$ are independent samples  from $N(0,\bI_{d})$ and $N(0,\bI_{D-d})$, respectively,
\begin{multline}
\label{eq:calA22}
\left\|\Expect_{\bx\sim N(0,\bI)}\frac{\bU_{L_*}^\top\bx\bx^\top\bU_{L_*}}{\|\bU_{L_*^{\perp}}^\top\bx\|^2}\right\|=
\Expect_{\bx\sim N(0,\bI)}\frac{1}{{\|\bU_{L_*^{\perp}}^\top\bx\|^2}} {\|\Expect_{\bx\sim N(0,\bI)}\bU_{L_*}^\top\bx\bx^\top\bU_{L_*}\|}\\={(D-d-2)^{-1}}.   
\end{multline}
The last equality uses the fact that the distribution of  ${1}/{{\|\bU_{L_*^{\perp}}^\top\bx\|^2}}$ is the inverse-chi-squared with parameter $D-d$ and its expectation is thus $1/(D-d-2)$ and the basic observation that for $\bx\sim N(0,\bI)$ the expectation of $\bU_{L_*}^\top\bx\bx^\top\bU_{L_*}$ is $\bI_d$.
 
The combination of  \eqref{eq:calA1}-\eqref{eq:calA22} yields 
\begin{equation}\label{eq:calA3_2}
\lim_{n_0\rightarrow\infty}{\calA}\leq 
C_{\mu_{\text{out}},u} \cdot \frac{D-d}{D-d-2}.
\end{equation}
We thus conclude the first inequality in \eqref{eq:calA_R} from  \eqref{eq:Cmu2} and  \eqref{eq:calA3_2}.

\textbf{Estimation of $\calR$:} We note that 
\begin{equation}
\label{eq:calS1_prelim}
\calR=n_0\calA/(D-d)\calS,     
\end{equation}
where $\calS=\bar{\sigma}_{\text{tail}}\left(\sum_{\bx\in\calX}\frac{\bx\bx^\top}{\|\bx\|^2}\right)$ is asymptotically estimated as follows:
\begin{align}\label{eq:calS1}&\lim_{n_0\rightarrow\infty}\frac{\calS}{n_0} \geq \bar{\sigma}_{\text{tail}}\Big(\Expect_{\bx\sim N(0,\bSigma^{(\text{out})}/D)}\frac{\bx\bx^\top}{\|\bx\|^2}\Big) \geq   
\sigma_{D}\Big(\Expect_{\bx\sim \mu_{\text{out}}}\frac{\bx\bx^\top}{\|\bx\|^2}\Big)\\\nonumber\geq& C_{\mu_{\text{out}},l}
\sigma_{D}\Big(\Expect_{\bx\sim \mu_0}\frac{\bx\bx^\top}{\|\bx\|^2}\Big)=\frac{ C_{\mu_{\text{out}},l}}{D}.\end{align}
The first inequality follows the law of large numbers, noting $\calS$ is defined for all points and the RHS for the outliers only. The second one uses the definitions of $\mu_{\text{out}}$ and $\bar{\sigma}_{\text{tail}}$. The third one applies \eqref{eq:Cmu}. To conclude the last equality note that by symmetry $\Expect_{\bx\sim \mu_0}{\bx\bx^\top}/{\|\bx\|^2}$ is a scalar matrix and furthermore  $\tr(\Expect_{\bx\sim \mu_0}{\bx\bx^\top}/{\|\bx\|^2})=1$ and thus $\Expect_{\bx\sim \mu_0} \bx\bx^\top/\|\bx\|^2=\bI/D$.  
Combining \eqref{eq:Cmu_ratio} and \eqref{eq:calA3_2}-\eqref{eq:calS1}, we conclude the second inequality of \eqref{eq:calA_R}.

\section{Completion of the Proof of Theorem~\ref{thm:noisy}}
\label{sec:proof_thm_noisy}
We first specify a choice for the different constants, for which our proof works:
\begin{equation}\label{eq:Cvalue_noisy_final}
C = \frac{3340}{C_E}+\left( 152\frac{\dssnr+\gamma}{2\gamma}+{16\frac{(\dssnr+2\gamma)^2}{(3\gamma)^2}}\cdot {{\dssnr+\gamma}} \right)\Big/
\left(1-\frac{2(\dssnr+2\gamma)}{3(\dssnr+\gamma)}\right), 
\end{equation} 
\begin{equation}
\label{eq:C_kappa_2}
C_{\kappa_2}=7,    
\end{equation}
\begin{equation}C_{\kappa_3} = {13\kappa_{\text{in},*}+1}
,\label{eq:c_kappa3}\end{equation}
\begin{equation}
\label{eq:C_kappa_1}
C_{\kappa_1}=\frac{1}{100 \, \epsilon \, C_{\kappa_3}}\min
\left(
\frac{1}{\epsilon \, C_{\kappa_2} } \left(\min
\left(\frac{\dssnr}{\gamma} - 1,{1,\frac{C_E}{2}}
\right)
\right)^2,\frac{n_0}{n_1(D-d)} \min\left(\frac{\dssnr}{\gamma} - 1,1 
\right)\right).
\end{equation}
Since $C_{\kappa_2}\equiv 7$, we did not mention it in the statement of the theorem. The other constants depend on the data, or $\epsilon$ and $\gamma$, but do not depend on the algorithm output at any iteration.

\textbf{Organization of the rest of the proof:}  
Section~\ref{sec:proof_lemma_10} provides some preliminary results.  
Finally, Section~\ref{sec:proof_or_overiter_noisy} establishes \eqref{eq:overiterations_noisy}, completing the proof of the theorem.

\subsection{Preliminary Results}
\label{sec:proof_lemma_10}
We establish preliminary results required for proving \eqref{eq:overiterations_noisy}.

\textbf{New quantities and their bounds:} 
We define the following spectral dominance factor for the noisy case:
\[\kappa_P(\bSigma,\epsilon) :={\epsilon}\sqrt{\sigma_1\left(\bSigma_{L_*,L_*}\right)/\sigma_D(\bSigma)}.\]
We prove below \eqref{eq:noisyassumption20}, which implies that $\kappa_P(\bSigma,\epsilon) < 0.1$ and thus we can use $1-\kappa_P(\bSigma,\epsilon)$ as a positive expression throughout the paper. 
We also define  
\[C_P(\bSigma,\epsilon):=\frac{(2\epsilon+\epsilon^2)\,n_1}{(1-\kappa_P(\bSigma,\epsilon))^2} \, \sigma_1\left(\bSigma_{L_*,L_*}\right)\,\ \text{ and } \ c_4(\bSigma,\epsilon):=\frac{C_P(\bSigma,\epsilon)}{\sigma_1\left(\bSigma_{L_*^\perp,L_*^\perp}\right)}\Big/\frac{n_0}{D-d}.\]
The next proposition bounds the new quantities. 
\begin{lemma}\label{lemma:constants_noisy}
    The following bounds hold:\\
\begin{equation}\label{eq:noisyassumption20}\kappa_P(\bSigma,\epsilon)\leq {\min(1/10,C_E/20)}
,\end{equation}
\begin{equation}\label{eq:noisy_conclusion} \max(c_4(\bSigma,\epsilon),\kappa_P(\bSigma,\epsilon))\leq \frac{1}{10}\min(\dssnr/\gamma - 1,1),\end{equation}
\begin{equation}\label{eq:c_4_kappa_p}
(1+c_4(\bSigma,\epsilon))(1+\kappa_P(\bSigma,\epsilon))^2 \leq \frac{\dssnr+2\gamma}{3\gamma},\,\,(1+c_4(\bSigma,\epsilon))^2(1+\kappa_P(\bSigma,\epsilon))^2 \leq \frac{\dssnr+\gamma}{2\gamma}.
\end{equation}
\end{lemma}



\begin{proof}[Proof of Lemma~\ref{lemma:constants_noisy}:] 
First, we simultaneously establish the upper bounds of $\kappa_P(\bSigma,\epsilon)$ in  \eqref{eq:noisyassumption20} and  \eqref{eq:noisy_conclusion}.  
We apply the definitions of $\hat{\kappa}_1(\bSigma)$, $\hat{\kappa}_2(\bSigma)$ and $\hat{\kappa}_3(\bSigma)$ (see \eqref{eq:def_kappa_1_2_new}, second equation in \eqref{eq:def_kappa_1_2}, and \eqref{eq:def_kappa_3_new}) and \eqref{eq:phi_relation} to bound  $\kappa_P(\bSigma,\epsilon)$: 
\begin{equation}
\label{eq:bound_kappa_p}
\kappa_P(\bSigma,\epsilon)=\epsilon\sqrt{\sigma_1\left(\bSigma_{L_*,L_*}\right)/\sigma_D(\bSigma)}\leq \epsilon \sqrt{\sigma_1(\bSigma_{\text{in},*})\hat{\kappa}_1(\bSigma)\hat{\kappa}_2(\bSigma)\hat{\kappa}_3(\bSigma)}.    
\end{equation}
We thus reduce the proof of the upper bounds on $\kappa_P(\bSigma,\epsilon)$ in \eqref{eq:noisyassumption20} and \eqref{eq:noisy_conclusion} to  verifying the bound
\begin{equation}\label{eq:noisy_conclusion2}
\epsilon \sqrt{{\sigma_1(\bSigma_{\text{in},*})}\hat{\kappa}_1(\bSigma)}\leq \frac{1}{10\sqrt{\hat{\kappa}_2(\bSigma)\hat{\kappa}_3(\bSigma)}} \min(\dssnr/\gamma - 1,{1,\frac{C_E}{2}}
),
\end{equation}
or equivalently, 
\[
{\sigma_1(\bSigma_{\text{in},*})}\hat{\kappa}_1(\bSigma) \leq \frac{1}{100}\frac{1}{\epsilon^2{\hat{\kappa}_2(\bSigma)\hat{\kappa}_3(\bSigma)}} \min\left(\frac{\dssnr}{\gamma} - 1,{1,\frac{C_E}{2}}
\right)^2.
\]
We note that the above equation  follows from the assumptions $\hat{\kappa}_2(\bSigma)\leq C_{\kappa_2}$, $\hat{\kappa}_3(\bSigma)\leq C_{\kappa_3}$, and $\hat{\kappa}_1(\bSigma)\leq C_{\kappa_1}/\sigma_1(\bSigma_{\text{in},*})$ (see \eqref{eq:overiterations_noisy}), and the choice of $C_{\kappa_1}$, which depends on $C_{\kappa_2}$ and $C_{\kappa_3}$ (see \eqref{eq:C_kappa_1} and note we use the first term of the first minimum). This concludes \eqref{eq:noisyassumption20} and the bound on $\kappa_P(\bSigma,\epsilon)$ in \eqref{eq:noisy_conclusion}. 

Next, we verify the bound of $c_4(\bSigma,\epsilon)$ in  \eqref{eq:noisy_conclusion}. 
Applying $\kappa_P(\bSigma,\epsilon)\leq 1/10$, which follows from \eqref{eq:noisyassumption20} proved above, the assumption $\epsilon\leq 1$ (see \eqref{eq:noisy_inliers}), so $\epsilon^2\leq \epsilon$, and the definition of $c_4(\bSigma,\epsilon)$, the upper bound of $c_4(\bSigma,\epsilon)$ in \eqref{eq:noisy_conclusion} holds if \[\frac{3\epsilon}{0.9^2}\,n_1\frac{\sigma_1\left(\bSigma_{L_*,L_*}\right)}{{\sigma_1\left(\bSigma_{L_*^\perp,L_*^\perp}\right)}}\leq \frac{1}{10}\min(\dssnr/\gamma - 1,1)\frac{n_0}{D-d}.\] 
Note that   ${\sigma_1\left(\bSigma_{L_*,L_*}\right)}/{{\sigma_1\left(\bSigma_{L_*^\perp,L_*^\perp}\right)}}\leq {\sigma_1(\bSigma_{\text{in},*})}\hat{\kappa}_1(\bSigma)\hat{\kappa}_3(\bSigma)$, where the proof is similar to that of \eqref{eq:bound_kappa_p}, and it is thus sufficient to verify 
\begin{equation}\label{eq:noisy_conclusion3}
\epsilon {\sigma_1(\bSigma_{\text{in},*})}\hat{\kappa}_1(\bSigma)\leq \frac{n_0}{n_1(D-d)\hat{\kappa}_3(\bSigma)} \cdot 0.027 \cdot \min(\dssnr/\gamma - 1,1).
\end{equation}

This equation follows from the assumptions $\hat{\kappa}_3(\bSigma)\leq C_{\kappa_3}$ and $\hat{\kappa}_1(\bSigma)\leq C_{\kappa_1}/\sigma_1(\bSigma_{\text{in},*})$ (see \eqref{eq:overiterations_noisy}), and the choice of $C_{\kappa_1}$ (see \eqref{eq:C_kappa_1} and note we only the second term of the first minimum).

Lastly, we prove the first equation of \eqref{eq:c_4_kappa_p} by applying \eqref{eq:noisy_conclusion} and the elementary inequality
$(1+\min(x,1)/10)^3\leq 1+x/3$ for $x>0$. This inequality can be verified by treating separately the cases $x \leq 1$ and $x > 1$, and applying a simple differentiation argument in the first case. 
Setting $x=\dssnr/\gamma - 1$, we observe from \eqref{eq:noisy_conclusion} that both $c_4(\bSigma,\epsilon)$ and $\kappa_P(\bSigma,\epsilon)$ are bounded above by $0.1\min(x,1)$. 
Substituting these bounds into the latter inequality yields the first equation in \eqref{eq:c_4_kappa_p}. The proof of the second equation of \eqref{eq:c_4_kappa_p} is similar.
\end{proof}

\textbf{Auxiliary propositions:} 
Let ${T_1}_P$ denote the TME operator, $T_1$, restricted to the set of projected inliers $\{\bU_{L_*}^\top\bx: \bx\in\calX_{\text{in}}\}$. That is, 
\begin{equation}\label{eq:tp_inliers}
{T_1}_P(\bSigma)=\sum_{\bx\in \calX_{\text{in}}}\frac{\bU_{L_*}^\top\bx\bx^\top \bU_{L_*}}{(\bU_{L_*}^\top\bx)^\top \bSigma^{-1}\bU_{L_*}^\top\bx}.
\end{equation}

The following proposition generalize parts (a) and (b) of Lemma~\ref{lemma:xplus} to the noisy setting. 
\begin{lemma}\label{lemma:xplus_noisy}
The following properties hold, assuming that $\kappa_P(\bSigma,\epsilon)<1$:\\   
\begin{equation}\label{eq:xin_noisyy_upperbound}
\sigma_1(\bSigma_{\text{in},*}^{-0.5}[\bSigma_{+,\text{in}}]_{L_*,L_*}\,\bSigma_{\text{in},*}^{-0.5})\leq \frac{n_1}{d(1-\kappa_P(\bSigma,\epsilon))^2}\sigma_1(\bSigma_{\text{in},*}^{-0.5}\bSigma_{L_*,L_*}\bSigma_{\text{in},*}^{-0.5}),
\end{equation}
\begin{equation}\label{eq:xin_noisyy}
h(\bSigma_{\diamond,\text{in}})\geq \frac{n_1}{d(1+\kappa_P(\bSigma,\epsilon))^2}h(\bSigma),
\end{equation}
\begin{equation}
\label{eq:xin_noisy2}
\|\bSigma_{+,\text{in}}-\bSigma_{\diamond,\text{in}}\|\leq C_P(\bSigma,\epsilon),
\end{equation}
and
\begin{align}\label{eq:xout1_noisy}
\|\bSigma_{\diamond,\text{out}}\|&
\leq C_P(\bSigma,\epsilon)+\|\bSigma_{+,\text{out}}\|.
\end{align}
An immediate consequence of \eqref{eq:xout1_noisy} is
\begin{align}\label{eq:xout2_noisy}
\frac{\|\bSigma_{\diamond,\text{out}}\|}{\sigma_1(\bSigma_{L_*^{\perp},L_*^{\perp}})}\leq (1+c_4(\bSigma,\epsilon))\tilde{\calA} \ \text{ and } \ \frac{\tr([\bSigma_{\diamond,\text{out}}]_{L_*^{\perp},L_*^{\perp}})}{\sigma_1(\bSigma_{L_*^{\perp},L_*^{\perp}})}\leq  (1+c_4(\bSigma,\epsilon)){n_0}. \end{align}
\end{lemma}
\begin{proof}[Proof of Lemma~\ref{lemma:xplus_noisy}:]

(a) We claim that for any 
$\bSigma \in S_{++}(d)$:
\begin{equation}\label{eq:xin_noisy1}
\frac{1}{(1+\kappa_P(\bSigma,\epsilon))^2}{T_1}_P(g_1(\bSigma))\psdleq  [\bSigma_{+,\text{in}}]_{L_*,L_*}\psdleq \frac{1}{(1-\kappa_P(\bSigma,\epsilon))^2}{T_1}_P(\bSigma_{L_*,L_*}) 
\end{equation}
and
\begin{align}\label{eq:xin_noisy3}
\nonumber
&\frac{n_1}{d}{\left(1+C_E\left(1-\sigma_d(\bSigma_{\text{in},*}^{-0.5}\bSigma\,\bSigma_{\text{in},*}^{-0.5})/\sigma_1(\bSigma_{\text{in},*}^{-0.5}\bSigma\,\bSigma_{\text{in},*}^{-0.5})\right)\right)}\sigma_d(\bSigma_{\text{in},*}^{-0.5}\bSigma\,\bSigma_{\text{in},*}^{-0.5}) \\
&\leq \sigma_d(\bSigma_{\text{in},*}^{-0.5}{T_1}_P(\bSigma)\,\bSigma_{\text{in},*}^{-0.5})  
\leq \sigma_1(\bSigma_{\text{in},*}^{-0.5}{T_1}_P(\bSigma)\,\bSigma_{\text{in},*}^{-0.5})\leq \frac{n_1}{d}\sigma_1(\bSigma_{\text{in},*}^{-0.5}\bSigma\,\bSigma_{\text{in},*}^{-0.5}).
\end{align}
Before proving these claims, we show they are sufficient for concluding this part. Indeed,  \eqref{eq:xin_noisyy_upperbound} follows from the second Loewner order  in \eqref{eq:xin_noisy1} and the third inequality in \eqref{eq:xin_noisy3}, while \eqref{eq:xin_noisyy} follows from the first Loewner order  in \eqref{eq:xin_noisy1}, the first inequality in \eqref{eq:xin_noisy3}, and  $C_E>0$.

We first conclude \eqref{eq:xin_noisy3} by  noting that its first inequality follows from the proofs of   \eqref{eq:lemma2sigmad}, and its third inequality follows from the proof of Lemma 2.1 of  \cite{tyler1987distribution}.

To  prove \eqref{eq:xin_noisy1}, we first note that  
\begin{equation}
\label{eq:Simga+L*}
[\bSigma_{+,\text{in}}]_{L_*,L_*}=\sum_{\bx\in\calX_{\text{in}}}\frac{\bU_{L_*}^\top\bx\bx^\top \bU_{L_*}}{\bx^\top\bSigma^{-1}\bx}.   
\end{equation}
We thus need to compare the RHS of \eqref{eq:tp_inliers}, applied to either  $g_1(\bSigma)$ or $\bSigma_{L_*,L_*}$, with that of \eqref{eq:Simga+L*}. Note that they have the same numerators, so it is sufficient to compare their denominators,   $(\bU_{L_*}^\top\bx)^\top\bSigma^{-1}(\bU_{L_*}^\top\bx)$, applied to the same matrices, and $\bx^\top\bSigma^{-1}\bx$,  where $\bx\in\calX_{\text{in}}$.

For any $\bx\in\calX_{\text{in}}$, using basic matrix properties and inequalities (in particular, $g_1(\bSigma) \psdleq \bSigma$, $\bA_{L_*,L_*} \psdleq \bB_{L_*,L_*}$ whenever $\bA \psdleq \bB$, and $\sigma_{1}([\bSigma^{-1}]_{L_*^\perp,L_*^\perp})\leq \sigma_{1}([\bSigma^{-1}])=1/\sigma_D(\bSigma)$),
\begin{align}
&\sqrt{\bx^\top\bSigma^{-1}\bx}=\|\bSigma^{-0.5}\bx\|\leq \|\bSigma^{-0.5}\bP_{L_*}\bx\|+\|\bSigma^{-0.5}\bP_{L_*^\perp}\bx\|
= \sqrt{\bx^\top\bU_{L_*}[\bSigma^{-1}]_{L_*,L_*}\bU_{L_*}^\top\bx}\nonumber\\
&
+\sqrt{\bx^\top\bU_{L_*^\perp}[\bSigma^{-1}]_{L_*^\perp,L_*^\perp}\bU_{L_*^\perp}^\top\bx}
\leq \sqrt{\bx^\top\bU_{L_*}[g_1(\bSigma))^{-1}]_{L_*,L_*}\bU_{L_*}^\top\bx}+\frac{\|\bU_{L_*^\perp}^\top\bx\|}{\sqrt{\sigma_D(\bSigma)}}.\label{eq:lemma11a_1}
\end{align}
Similarly,
\begin{align}
\label{eq:lemma11a_2}
&\sqrt{\bx^\top\bSigma^{-1}\bx}
\geq \|\bSigma^{-0.5}\bP_{L_*}^\top\bx\|-\|\bSigma^{-0.5}\bP_{L_*^\perp}^\top\bx\|\geq\sqrt{\bx^\top\bU_{L_*}
[\bSigma^{-1}]_{L_*,L_*}\bU_{L_*}^\top\bx}
\\ &-\frac{\|\bU_{L_*^\perp}^\top\bx\|}{\sqrt{\sigma_D(\bSigma)}}\nonumber\geq\sqrt{\bx^\top\bU_{L_*}
(\bSigma_{L_*,L_*})^{-1}\bU_{L_*}^\top\bx}
-\frac{\|\bU_{L_*^\perp}^\top\bx\|}{\sqrt{\sigma_D(\bSigma)}},\end{align}
where the last inequality uses  $[\bSigma^{-1}]_{L_*,L_*}\psdgeq (\bSigma_{L_*,L_*})^{-1}$, which we clarify below. Applying the formula for the inversion of a $2\times 2$ block matrix~\cite[bottom of pages 86]{bhatia2009positive}:
\[
\bSigma^{-1}=\begin{pmatrix}
\bSigma_{L_*,L_*} & \bSigma_{L_*^\perp,L_*} \\
\bSigma_{L_*,L_*^\perp} & \bSigma_{L_*^\perp,L_*^\perp} 
\end{pmatrix}^{-1}=\begin{pmatrix}
(\bSigma_{L_*,L_*}-\bSigma_{L_*,L_*^\perp}\bSigma_{L_*^\perp,L_*^\perp}^{-1}\bSigma_{L_*^\perp,L_*})^{-1} & * \\
* & * 
\end{pmatrix}.
\]
This implies  $[\bSigma^{-1}]_{L_*,L_*}=[g_1(\bSigma)]_{L_*,L_*}^{-1}$. Applying $g_1(\bSigma)  \psdleq \bSigma$ (see Lemma~\ref{lemma:g1}(a)), we conclude that $[g_1(\bSigma)]_{L_*,L_*} \psdleq \bSigma_{L_*,L_*}$ and consequently the inequality  
$[\bSigma^{-1}]_{L_*,L_*}\psdgeq (\bSigma_{L_*,L_*})^{-1}$. 

Applying again $\bSigma_{L_*,L_*}\psdgeq [g_1(\bSigma)]_{L_*,L_*}$ yields
\[
\sqrt{\bx^\top\bU_{L_*}(g_1(\bSigma))^{-1}\bU_{L_*}^\top\bx}\geq \sqrt{\bx^\top\bU_{L_*}\left(\bSigma_{L_*,L_*}\right)^{-1}\bU_{L_*}^\top\bx} \geq \frac{\|\bU_{L_*}^\top\bx\|}{\sqrt{\sigma_1\left(\bSigma_{L_*,L_*}\right)}}.
\]
Applying the latter equation, the model assumption $\|P_{L_*^\perp}\bx\|\leq \epsilon \|P_{L_*}\bx\|$ and the definition of $\kappa_P(\bSigma,\epsilon)$, results in the following bound, which is relevant to both \eqref{eq:lemma11a_1} and \eqref{eq:lemma11a_2}:
\begin{equation}
\frac{{\|\bU_{L_*^\perp}^\top\bx\|}/{\sqrt{\sigma_D(\bSigma)}}}{\sqrt{\bx^\top\bU_{L_*}(g_1(\bSigma))^{-1}\bU_{L_*}^\top\bx}}\leq 
\frac{{\|\bU_{L_*^\perp}^\top\bx\|}/{\sqrt{\sigma_D(\bSigma)}}}{\sqrt{\bx^\top\bU_{L_*}\left(\bSigma_{L_*,L_*}\right)^{-1}\bU_{L_*}^\top\bx}}\leq \kappa_P(\bSigma,\epsilon).
\label{eq:bound_last_lemma_10a}
\end{equation}

Combining \eqref{eq:bound_last_lemma_10a} with \eqref{eq:lemma11a_1} and \eqref{eq:lemma11a_2} results in 
\begin{equation}\label{eq:weight_ratio}
\frac{(1+\kappa_P(\bSigma,\epsilon))^{-2}}{\bx^\top\bU_{L_*}(g_1(\bSigma))^{-1}\bU_{L_*}^\top\bx}\leq \frac{1}{\bx^\top\bSigma^{-1}\bx}\leq 
\frac{{(1-\kappa_P(\bSigma,\epsilon))^{-2}}}{\bx^\top\bU_{L_*}(\bSigma_{L_*,L_*})^{-1}\bU_{L_*}^\top\bx}.
\end{equation}
In view of the proof strategy explained above (that is, comparing the denominators of \eqref{eq:Simga+L*} and \eqref{eq:tp_inliers} applied to either  $g_1(\bSigma)$ or $\bSigma_{L_*,L_*}$), \eqref{eq:weight_ratio} concludes the proof of \eqref{eq:xin_noisy1}.

(b) 
Clearly, the $(L_*,L_*)$ block of $\bSigma_{\diamond,\text{in}}$ coincides with that of $\bSigma_{+,\text{in}}$ and the other blocks are zero. Consequently, 
\begin{equation}
\|\bSigma_{+,\text{in}}-\bSigma_{\diamond,\text{in}}\|\leq 2\|[\bSigma_{+,\text{in}}]_{L_*,L_*^\perp}\|+\|[\bSigma_{+,\text{in}}]_{L_*^\perp,L_*^\perp}\|.\label{eq:assist_10_b_1}    
\end{equation}

To bound both terms on the RHS of the above equation, we use the following inequality obtained by first applying the second inequality of \eqref{eq:weight_ratio} and then the Loewner order $(\bSigma_{L_*,L_*})^{-1} \psdgeq \sigma_1^{-1} \bI_{d}$:
\[\frac{1}{\bx^\top\bSigma^{-1}\bx}\leq \frac{1}{(1-\kappa_P(\bSigma,\epsilon))^2}\frac{1}{\bx^\top \bU_{L_*}(\bSigma_{L_*,L_*})^{-1}\bU_{L_*}^\top \bx}\leq \frac{1}{(1-\kappa_P(\bSigma,\epsilon))^2}\frac{\sigma_1\left(\bSigma_{L_*,L_*}\right)}{ \|\bU_{L_*}^\top\bx\|^2}.\] 
Applying the definition of $\bSigma_{+,\text{in}}$, then the noise assumption $\|\bU^\top_{L_*^\perp}\bx\|\leq \epsilon \|\bU^\top_{L_*}\bx\|$, and at last the above inequality while noting that the resulting bound is the same for any of the $n_1$ inliers, yields 
\begin{align*}\|[\bSigma_{+,\text{in}}]_{L_*,L_*^\perp}\|=&\left\|\sum_{\bx\in\calX_{\text{in}}}\frac{\bU_{L_*}^\top\bx\bx^\top \bU_{L_*^\perp}}{\bx^\top\bSigma^{-1}\bx}\right\|\leq \sum_{\bx\in\calX_{\text{in}}}\frac{\epsilon\|\bU_{L_*}^\top\bx\|^2}{\bx^\top\bSigma^{-1}\bx}\leq 
n_1\frac{\epsilon\sigma_1\left(\bSigma_{L_*,L_*}\right)}{(1-\kappa_P(\bSigma,\epsilon)^2)},\\ \|[\bSigma_{+,\text{in}}]_{L_*^\perp,L_*^\perp}\|=&\left\|\sum_{\bx\in\calX_{\text{in}}}\frac{\bU_{L_*^\perp}^\top\bx\bx^\top \bU_{L_*^\perp}}{\bx^\top\bSigma^{-1}\bx}\right\|\leq \sum_{\bx\in\calX_{\text{in}}}\frac{\epsilon^2\|\bU_{L_*}^\top\bx\|^2}{\bx^\top\bSigma^{-1}\bx}\leq 
n_1\frac{\epsilon^2\sigma_1\left(\bSigma_{L_*,L_*}\right)}{(1-\kappa_P(\bSigma,\epsilon)^2)}.
\end{align*}
Lastly, combining \eqref{eq:assist_10_b_1} with the above two equations, we conclude  \eqref{eq:xin_noisy2}:
\begin{align*}\|\bSigma_{+,\text{in}}-\bSigma_{\diamond,\text{in}}\|\leq n_1\frac{2\epsilon+\epsilon^2}{(1-\kappa_P(\bSigma,\epsilon))^2}\sigma_1\left(\bSigma_{L_*,L_*}\right)=C_P(\bSigma,\epsilon).
\end{align*}

Next, we note the definitions of $\bSigma_{\diamond,\text{out}}$ and $\bSigma_{\text{out}}$ imply  $\bSigma_{\diamond,\text{out}}-\bSigma_{+,\text{out}}=-(\bSigma_{\diamond,\text{in}}-\bSigma_{+,\text{in}})$. 
 Consequently, $\|\bSigma_{\diamond,\text{out}}\|\leq \|\bSigma_{+,\text{out}}\| +\|\bSigma_{+,\text{in}}-\bSigma_{\diamond,\text{in}}\|$ and  \eqref{eq:xout1_noisy} follows from the latter inequality and \eqref{eq:xin_noisy2}. The proof of \eqref{eq:xout2_noisy} follows from \eqref{eq:xout2}, \eqref{eq:xout1_noisy}, the definition of $c_4(\bSigma,\epsilon)$, and the fact that $\calA\geq 1$ (see  \eqref{eq:calA>1}). 
\end{proof}

The following proposition establishes that $\sigma_1\left([g_1(\bSigma)]{L,L_*}\right)$ remains comparable in magnitude to $\sigma_1\left(\bSigma_{L_*,L_*}\right)$. This result will be used in Part II of the proof.
\begin{lemma}\label{lemma:compare_g1}
If $\bSigma \in S_{++}(D)$ and $\hat{\kappa}_2(\bSigma)\leq 7$, then 
$\sigma_1\left([g_1(\bSigma)]_{L_*,L_*}\right) \geq \sigma_1\left(\bSigma_{L_*,L_*}\right)/8$.
\end{lemma}

\begin{proof}[Proof of Lemma \ref{lemma:compare_g1}:]
For simplicity of notation, we denote \[
\bSigma_2 = 
\bSigma_{L_*,L_*^\perp}\bSigma^{-1}_{L_*^\perp,L_*^\perp}\bSigma_{L_*^\perp,L_*} \ \text{ and }  \
\bSigma_1 = \bSigma_{L_*,L_*} - \bSigma_2 = [g_1(\bSigma)]_{L_*,L_*}.
\]
We prove by contradiction. Assume that the conclusion of the lemma fails, then 
\[
\sigma_1(\bSigma_1) < \frac{1}{8} \sigma_1\left(\bSigma_1+\bSigma_2\right) \leq \frac{1}{8}
\left(\sigma_1(\bSigma_1)+\sigma_1(\bSigma_2)\right).
\]
Subtracting $\sigma_1(\bSigma_1)/8$ from both sides of the above equation yields
$\sigma_1(\bSigma_1) < \tfrac{1}{7}\,\sigma_1(\bSigma_2)$. 

Consider the vector $\bu \in \mathbb{R}^d$ such that $\bu^\top \bSigma_2 \bu = \sigma_1(\bSigma_2)$, and 
the vector $\bv \in \mathbb{R}^D$ defined by
$\bv_{L_*} = -\bu$ and $ 
\bv_{L_*^\perp} = (\bSigma^{-1}_{L_*^\perp,L_*^\perp})^\top
    \bSigma_{L_*,L_*^\perp}^\top \bu$. Note that
\begin{align*}
&\|\bv_{L_*^\perp}\|^2 \, \sigma_1(\bSigma_{L_*^\perp,L_*^\perp})
\geq \bv_{L_*^\perp}^\top\bSigma_{L_*^\perp,L_*^\perp}\bv_{L_*^\perp}=
\bu^\top \bSigma_{L_*,L_*^\perp}\bSigma^{-1}_{L_*^\perp,L_*^\perp}\,     \bSigma_{L_*^\perp,L_*^\perp}\,  (\bSigma^{-1}_{L_*^\perp,L_*^\perp})^\top\,
    \bSigma_{L_*,L_*^\perp}^\top \bu
\\&= \bu^\top \bSigma_{L_*,L_*^\perp}(\bSigma^{-1}_{L_*^\perp,L_*^\perp})^\top\,
    \bSigma_{L_*,L_*^\perp}^\top \bu
    = \bu^\top \bSigma_2\bu=\sigma_1(\bSigma_2),
\end{align*}
where the third equality follows from the symmetry of $\bSigma$, which implies $ \bSigma_{L_*,L_*^\perp}^\top = \bSigma_{L_*^\perp,L_*}$ and $ \bSigma_{L_*^\perp,L_*^\perp}^\top = \bSigma_{L_*^\perp,L_*^\perp}$. 
Moreover, applying similar arguments yields
\[
\begin{aligned}
\bv^\top \bSigma \bv&=\bv_{L_*}^\top\bSigma_{L_*,L_*}\bv_{L_*}+2\bv_{L_*}^\top\bSigma_{L_*,L_*^\top}\bv_{L_*^\perp} +\bv_{L_*^\perp}^\top\bSigma_{L_*^\perp,L_*^\top}\bv_{L_*^\perp} \\
&=\bu^\top \, \bSigma_{L_*,L_*} \, \bu -2\bu^\top \, \bSigma_{L_*,L_*^\top}\,(\bSigma^{-1}_{L_*^\perp,L_*^\perp})^\top
\,    \bSigma_{L_*,L_*^\perp}^\top \, \bu\\&\,\,\,\,\,\,+\bu^\top \, \bSigma_{L_*,L_*^\perp}\,\bSigma^{-1}_{L_*^\perp,L_*^\perp}\,\bSigma_{L_*^\perp,L_*^\perp}\,(\bSigma^{-1}_{L_*^\perp,L_*^\perp})^\top
    \bSigma_{L_*,L_*^\perp}^\top \bu\\
&= \bu^\top \bSigma_{L_*,L_*}\bu 
   - \bu^\top \bSigma_{L_*,L_*^\perp}\bSigma^{-1}_{L_*^\perp,L_*^\perp}\bSigma_{L_*^\perp,L_*}\bu = \bu^\top \bSigma_1 \bu \leq \sigma_1(\bSigma_1).
\end{aligned}
\]

Consequently,
\[
\begin{aligned}
\sigma_D(\bSigma) 
\leq \frac{\bv^\top \bSigma \bv}{\|\bv_{L_*}\|^2+\|\bv_{L_*^\perp}\|^2} \leq \frac{\bv^\top \bSigma \bv}{\|\bv_{L_*^\perp}\|^2} 
\leq \frac{\sigma_1(\bSigma_1)}{\sigma_1(\bSigma_2)/\sigma_1(\bSigma_{L_*^\perp,L_*^\perp})}
< \frac{1}{7}\,\sigma_1(\bSigma_{L_*^\perp,L_*^\perp}).
\end{aligned}
\]

This contradicts the assumption $\hat{\kappa}_2(\bSigma)\leq 7$ and concludes the proof of the lemma.\end{proof}

\subsection{Proof of \eqref{eq:overiterations_noisy}-Part I}\label{sec:proof_or_overiter_noisy}
We derive the bounds of  $\hat{\kappa}_1(T(\bSigma))$ and  $\hat{\kappa}_2(T(\bSigma))$, given the assumptions on $\hat{\kappa}_1(\bSigma)$ and  $\hat{\kappa}_2(\bSigma)$ stated in \eqref{eq:overiterations_noisy}. 

\textbf{Verifying $\boldsymbol{{\hat{\kappa}_1(T(\bSigma))\geq C_0 \hat{\kappa}_1(\bSigma)}}$}: 
We follow the proof of  Theorem~\ref{thm:main}, in particular, \eqref{eq:kappa1_proof}-\eqref{eq:conditions_C}. However, we replace $(\bSigma_{+,\text{in}}, \bSigma_{+,\text{out}})$ with $(\bSigma_{\diamond,\text{in}}, \bSigma_{\diamond,\text{out}})$ and redefine   $\nu:=\sigma_d(\bSigma_{\diamond,\text{in}})/\sigma_1(\bSigma_{\diamond,\text{out}})$. There are two technical modifications. First, since \eqref{eq:xout2_noisy} serves as a replacement for \eqref{eq:xout2}, 
all estimates of $\sigma_{1}(\bSigma_{L_*^{\perp},L_*^{\perp}})$ must include an 
additional factor of $1 + c_{4}(\bSigma,\epsilon)$. Second, because we use \eqref{eq:xin_noisyy} instead of \eqref{eq:xin}, 
all estimates of $h\big(\bSigma_{\diamond,\mathrm{in}}\big)$ must include the additional factor 
$\big(1 + \kappa_{P}(\bSigma,\epsilon)\big)^{2}$. 

Applying these required modifications,  \eqref{eq:est_nu} slightly changes to include an additional factor: 
\begin{align}\label{eq:est_nu_noisy}
\nu \geq  \frac{1}{(1+\kappa_P(\bSigma,\epsilon))^2(1+c_4(\bSigma,\epsilon))}\hat{\kappa}_1(\bSigma)\frac{n_1\sigma_d(\bSigma_{\text{in},*})}{\tilde{\calA}d}.
\end{align} 
Similarly, the lower bound on $\tilde{k}_1$ in  \eqref{eq:est_nv_asumption} is replaced with  
\begin{equation*}
\tilde{\kappa}_1\geq (1+\kappa_P(\bSigma,\epsilon))^2(1+c_4(\bSigma,\epsilon))\frac{\calA}{\dssnr\cdot  \sigma_d(\bSigma_{\text{in},*})}.\end{equation*}
In view of~\eqref{eq:est_nu_noisy}, the bound on $\nu$ in~\eqref{eq:kappa1nu} includes the same  additional factor: 
\begin{equation}\label{eq:kappa1nu_noisy}
\nu\geq \frac{C}{(1+c_4(\bSigma,\epsilon))(1+\kappa_P(\bSigma,\epsilon))^2}\left(\kappa_{\text{in},*}+\frac{\tilde{\calA}}{\frac{n_1}{d}-\gamma\frac{n_0}{D-d}}+\frac{\kappa_2\tilde{\calA}}{\gamma \calS}(1+\kappa_{\text{in},*})\right).
\end{equation}
Furthermore, the final inequality in \eqref{eq:est_kappa1} gives rise to a similar factor for its modified version: 
\begin{align}
\hat{\kappa}_1(T(\bSigma))\geq \frac{1}{(1+c_4(\bSigma,\epsilon))(1+\kappa_P(\bSigma,\epsilon))^2}\frac{1-(2+\frac{4\tilde{\calA}\hat{\kappa}_2(\bSigma)}{\gamma\calS}) \frac{\kappa_{\text{in},*}}{\nu}}{\frac{4}{\nu-2} \, \tilde{\calA} +\gamma \frac{n_0}{D-d}}\frac{n_1}{d}\hat{\kappa}_1(\bSigma)\label{eq:est_kappa1_noisy}.
\end{align}
We define   $c_4'(\bSigma,\epsilon)=(1+c_4(\bSigma,\epsilon))(1+\kappa_P(\bSigma,\epsilon))^2$ and note that   \eqref{eq:aux_eq_to_figure_const} holds, where $C$ is replaced by $C'=C/c_4'(\bSigma,\epsilon)$. Observing the last inequality of \eqref{eq:aux_eq_to_figure_const}, one may note that for the current theorem  the main condition in \eqref{eq:kappa1} can omit the dependence on $\kappa_2$, but for consistency with the previous proof we do not change \eqref{eq:kappa1}. 
In view of these replacements,   
condition~\eqref{eq:conditions_C_aux} in the proof of Theorem~\ref{thm:main} is replaced with
\begin{equation*}
    \frac{1-\frac{30}{C}\,c_4'(\bSigma,\epsilon)}{\frac{8}{C}\,c_4'(\bSigma,\epsilon)\left(\frac{n_1}{d}-\gamma\frac{n_0}{D-d}\right)+\gamma\frac{n_0}{D-d}}\cdot\frac{1}{c_4'(\bSigma,\epsilon)}\cdot\frac{n_1}{d} 
\geq C_0.
\end{equation*}
Applying the arguments used to derive \eqref{eq:conditions_C} from \eqref{eq:conditions_C_aux} gives:
\begin{equation}
\frac{1-\frac{30}{C}c_4'(\bSigma,\epsilon)}{\frac{8}{C}c_4'(\bSigma,\epsilon)\Big(\dssnr-\gamma\Big)+\gamma}\cdot\frac{\dssnr}{c_4'(\bSigma,\epsilon)}
\geq \frac{2\dssnr}{\dssnr+\gamma}.\label{eq:before_C_noisy}    
\end{equation}
Note that \eqref{eq:c_4_kappa_p} implies $c_4'(\bSigma,\epsilon)\leq ({\dssnr+2\gamma})/({3\gamma})$. Combining this latter bound with \eqref{eq:before_C_noisy} and basic algebraic manipulation, results in the following requirement on $C$: 
\begin{equation}\label{eq:Cvalue_noisy}
C
\geq \frac{30\frac{\dssnr+2\gamma}{3\gamma}+{2\frac{\dssnr+2\gamma}{3\gamma}}\cdot 8\frac{\dssnr+2\gamma}{3\gamma}\cdot\frac{\dssnr-\gamma}{{\dssnr+\gamma}}}{1-\frac{2(\dssnr+2\gamma)}{3(\dssnr+\gamma)}}.
\end{equation}
This requirement holds for $C$ set in \eqref{eq:Cvalue_noisy_final} and we thus conclude that  ${\hat{\kappa}_1(T(\bSigma))\geq C_0 \hat{\kappa}_1(\bSigma)}$.

\textbf{Verifying $\boldsymbol{\hat{\kappa}_2(T(\bSigma))\leq 7}$:}  
We follow the proof of Theorem~\ref{thm:main}, specifically \eqref{eq:bound_of_T_for_lemma6}-\eqref{eq:est_kappa2}, applying the above replacements. We recall that $C$ is replaced with $C'$ specified above.  Combining  \eqref{eq:Cvalue_noisy_final} and  \eqref{eq:c_4_kappa_p} yields   $C \geq 70(1+c_4(\bSigma,\epsilon))^2(1+\kappa_P(\bSigma,\epsilon))^2$. This results in two inequalities we will use in the proof: $C' \geq 70$ and  $C'\geq 58(1+c_4(\bSigma,\epsilon))\geq 2+56(1+c_4(\bSigma,\epsilon))$. 
Following the proof  of \eqref{eq:kappa2nu}, but applying the latter inequality when verifying the last inequality of \eqref{eq:kappa2nu} in the current case, yields the following adaptation for the noisy case:
\begin{equation}\label{eq:kappa2nu_noisy}
(\nu-2)\gamma\calS \geq 16(1+c_4(\bSigma,\epsilon))\tilde{\calA}\hat{\kappa}_2(\bSigma).
\end{equation}

We claim that  \eqref{eq:kappa2nu2} still holds for the noisy case. Indeed, \eqref{eq:xout2} is replaced with \eqref{eq:xout2_noisy}, which has an additional factor of $(1+c_4(\bSigma,\epsilon))$, but this factor cancels with the same factor in \eqref{eq:kappa2nu_noisy}. One can note that no additional change is needed until \eqref{eq:kappa2_bound}, where $C$ needs to be replaced by $C'$. 
This replacement also  results in a revised version of  \eqref{eq:est_kappa2} without its last two inequalities:
\begin{align}\label{eq:kappa2_bound2_noisy}
\hat{\kappa}_2(T(\bSigma))
\leq 3\max\left(\frac{28/C' +1}{-28/C'+1},\frac{2}{C'-2}\right).
\end{align}
Combining the above inequality $C' \geq 70$ with \eqref{eq:kappa2_bound2_noisy} yields the desired bound: 
$\hat{\kappa}_2(T(\bSigma))\leq 7$. 

\subsection{Proof of \eqref{eq:overiterations_noisy}-Part II}\label{sec:proof_or_overiter_noisy_2}
We prove   $\hat{\kappa}_3(T(\bSigma))\leq C_{\kappa_3}$ if $\hat{\kappa}_3(T(\bSigma))\leq C_{\kappa_3}$. To do this, we will show that both $[g_1(T(\bSigma))]_{L_*,L_*}$ and $[T(\bSigma)]_{L_*,L_*}$, which appear in the  definition of $\hat{\kappa}_3(T(\bSigma))$ (see \eqref{eq:def_kappa_3_new}) can be approximated by  $[\bSigma_{\diamond,\text{in}}]_{L_*,L_*}$, which in turn can be approximated by ${T_1}_P(g_1(\bSigma))$ and ${T_1}_P(\bSigma_{L_*,L_*})$ respectively. Combining it with  Lemma~\ref{lemma:TME_contraction}, we bound $\hat{\kappa}_3(T(\bSigma))$ by $C_{\kappa_3}$. The proof uses Lemma~\ref{lemma:bound_sigma_out}, where we recall that in the current setting it holds when $\bSigma_{+,\text{in}}$ and $\bSigma_{+,\text{out}}$ are replaced with their noisy counterparts $\bSigma_{\diamond,\text{in}}$ and $\bSigma_{\diamond,\text{out}}$. We partition the proof into the following four steps. 

\textbf{First step: analysis of the numerator and denominator of $\boldsymbol{\hat{\kappa}_3(T(\bSigma))}$.}  
For the numerator of $\hat{\kappa}_3(T(\bSigma))$, applying $\sigma_1(\bX)\leq \sigma_1(\bY)+\|\bX-\bY\|$, $\phi(\bX)-\phi(\bY)=\phi(\bX-\bY)$ (recall that $\phi$ was defined in \eqref{eq:def_phi}) and $\|\phi(\bX-\bY)\|\leq \|\bX-\bY\|/\sigma_d(\bSigma_{\text{in},*})$, which is a direct consequence of the second Loewner order in \eqref{eq:phi_relation}, then applying $[\bSigma_{\diamond,\text{in}}]_{L_*,L_*}=[\bSigma_{+,\text{in}}]_{L_*,L_*}$, and at last, the second Loewner order in  \eqref{eq:xin_noisy1}, the definition of $\phi$, and the last inequality in \eqref{eq:xin_noisy3}, results in 
\begin{align}\nonumber
\sigma_1(\phi([T(\bSigma)]_{L_*,L_*})) \leq & \sigma_1(\phi([\bSigma_{\diamond,\text{in}}]_{L_*,L_*})) +\frac{\|[T(\bSigma)]_{L_*,L_*}-[\bSigma_{\diamond,\text{in}}]_{L_*,L_*}\|}{\sigma_d(\bSigma_{\text{in},*})}\\= & \sigma_1(\phi([\bSigma_{+,\text{in}}]_{L_*,L_*})) +\frac{\|[T(\bSigma)]_{L_*,L_*}-[\bSigma_{\diamond,\text{in}}]_{L_*,L_*}\|}{\sigma_d(\bSigma_{\text{in},*})}\label{eq:bound_kappa_3T_new_2} \\\nonumber
 \leq& \frac{n_1}{d}\frac{1}{(1-\kappa_P(\bSigma,\epsilon))^2}\sigma_1(\phi(\bSigma_{L_*,L_*}))+\frac{\|[T(\bSigma)]_{L_*,L_*}-[\bSigma_{\diamond,\text{in}}]_{L_*,L_*}\|}{\sigma_d(\bSigma_{\text{in},*})}.
\end{align}

Following the same proof as above, but  using instead the inequality  $\sigma_1(\bX)\geq \sigma_1(\bY)-\|\bX-\bY\|$, 
the first Loewner order in  \eqref{eq:xin_noisy1}, and the first inequality in \eqref{eq:xin_noisy3}, yields
\begin{multline}
\label{eq:bound_kappa_3T_new_3} 
 \sigma_d(\phi(g_1(T(\bSigma)))) \geq \sigma_d(\phi([\bSigma_{\diamond,\text{in}}]_{L_*,L_*})) -\frac{\|[g_1(T(\bSigma))]_{L_*,L_*}-[\bSigma_{\diamond,\text{in}}]_{L_*,L_*}\|}{\sigma_d(\bSigma_{\text{in},*})} \geq \\
 \frac{n_1}{d}\frac{{\left(1+C_E\left(1-\frac{\sigma_d(\phi([g_1(\bSigma)]_{L_*,L_*}))}{\sigma_1(\phi([g_1(\bSigma)]_{L_*,L_*}))}\right)\right)}}{(1+\kappa_P(\bSigma,\epsilon))^2}\sigma_d(\phi([g_1(\bSigma)]_{L_*,L_*}))\!-\!\frac{\|[g_1(T(\bSigma))-\bSigma_{\diamond,\text{in}}]_{L_*,L_*}\|}{\sigma_d(\bSigma_{\text{in},*})}.   
\end{multline}

\textbf{Second step: a lower bound on a term in the RHS of \eqref{eq:bound_kappa_3T_new_3}}. 
We prove 
\begin{equation}\label{eq:kappa3_bar}
\frac{\sigma_1(\phi([g_1(\bSigma)]_{L_*,L_*}))}{\sigma_d(\phi([g_1(\bSigma)]_{L_*,L_*}))}\geq  \bar{\kappa}_3(\bSigma):=\max\left(\frac{\hat{\kappa}_3(\bSigma)}{8\kappa_{\text{in},*}},1\right).
\end{equation}
Combining the assumption $\hat{\kappa}_2(\bSigma)\leq C_{{\kappa}_2}=7$  
and  Lemma~\ref{lemma:compare_g1} results in   $\sigma_1(\bSigma_{L_*,L_*})\leq 8\sigma_1([g_1(\bSigma)]_{L_*,L_*})$. 
Applying  both Loewner orders of \eqref{eq:phi_relation} and the definition 
of 
$\kappa_{\text{in},*}$ (see \eqref{eq:defkinstar}) to the latter  inequality yields
$\sigma_1(\phi(\bSigma_{L_*,L_*}))\leq 8\kappa_{\text{in},*} \sigma_1(\phi([g_1(\bSigma)]_{L_*,L_*}))$. Thus,  
\begin{equation*}
\frac{\sigma_1(\phi([g_1(\bSigma)]_{L_*,L_*}))}{\sigma_d(\phi([g_1(\bSigma)]_{L_*,L_*}))}=\hat{\kappa}_3(\bSigma) \frac{\sigma_1(\phi([g_1(\bSigma)]_{L_*,L_*}))}{\sigma_1(\phi(\bSigma_{L_*,L_*}))}\geq  \frac{\hat{\kappa}_3(\bSigma)}{8\kappa_{\text{in},*}}.\end{equation*}
This inequality and the fact that the eigenvalues, $\sigma_i(\cdot)$, are in decreasing order result in \eqref{eq:kappa3_bar}.

\textbf{Third step: Refined bounds for terms used in the first step.} We bound the quantity 
\begin{equation}
    \label{eq:def_tau}
\tau:=\frac{\|[g_1(T(\bSigma))]_{L_*,L_*}-[\bSigma_{\diamond,\text{in}}]_{L_*,L_*}\|+\|[T(\bSigma)]_{L_*,L_*}-[\bSigma_{\diamond,\text{in}}]_{L_*,L_*}\|}
{\sigma_d(\bSigma_{\text{in},*}) \cdot \frac{n_1}{d}\frac{1}{(1+\kappa_P(\bSigma,\epsilon))^2}\sigma_d(\phi([g_1(\bSigma)]_{L_*,L_*}))}.
\end{equation}
Consequently, we will later conclude that the second terms in the RHS's of \eqref{eq:bound_kappa_3T_new_2}  and \eqref{eq:bound_kappa_3T_new_3} are dominated by the first terms on these RHS's. 

To establish an upper bound of $\tau$, we approximate  $[g_1(T(\bSigma))]_{L_*,L_*}$ by $[T(\bSigma)]_{L_*,L_*}$: 
\begin{align}\label{eq:bound_kappa_3T_new_1} 
\|[g_1(T(\bSigma))]_{L_*,L_*}-[T(\bSigma)]_{L_*,L_*}\|\leq \frac{\|[T(\bSigma)]_{L_*,L_*^\perp}\|^2}{\sigma_{D-d}([T(\bSigma)]_{L_*^\perp,L_*^\perp})} \leq \frac{4 \|\bSigma_{\diamond,\text{out}}\|^2}{\gamma\sigma_D(\bSigma)\calS},
\end{align}
where the first inequality follows from \eqref{eq:widetildeT}, and the second inequality uses both \eqref{eq:sigmaD-d} and the bound $\|[T(\bSigma)]_{L_*,L_*^\perp}\|\leq 2 \|\bSigma_{\diamond,\text{out}}\|$, which follows from \eqref{eq:widetildeT2}  in Lemma~\ref{lemma:bound_sigma_out}(a), after replacing the $+$ subscript with the $\diamond$ subscript.

The approximation of $[T(\bSigma)]_{L_*,L_*}$ by $[\bSigma_{\diamond,\text{in}}]_{L_*,L_*}$ follows from \eqref{eq:pertubation} in Lemma~\ref{lemma:bound_sigma_out}(a), after replacing the $+$ subscript with the $\diamond$ subscript:
\begin{equation}\label{eq:bound_kappa_3T_new_1b} 
\|[T(\bSigma)]_{L_*,L_*}-[\bSigma_{\diamond,\text{in}}]_{L_*,L_*}\|\leq \|T(\bSigma)-\bSigma_{\diamond,\text{in}}\|\leq 2 \|\bSigma_{\diamond,\text{out}}\|.
\end{equation}

Combining the definition of $\tau$,   \eqref{eq:bound_kappa_3T_new_1},  \eqref{eq:bound_kappa_3T_new_1b}, and the triangle inequality, yields
\begin{equation}
\tau\leq \frac{4 \|\bSigma_{\diamond,\text{out}}\| +\frac{4 \|\bSigma_{\diamond,\text{out}}\|^2}{\gamma\sigma_D(\bSigma)\calS}}{ \sigma_d(\bSigma_{\text{in},*})\cdot \frac{n_1}{d(1+\kappa_P(\bSigma,\epsilon))^2}\sigma_d(\phi([g_1(\bSigma)]_{L_*,L_*}))}.\label{eq:bound_kappa_3T_new_3_RHS2}
\end{equation}
Note that the first inequality in  \eqref{eq:xout2_noisy} and the bound $c_4(\bSigma,\epsilon)\leq 0.1$ (see  \eqref{eq:noisy_conclusion}) imply
\begin{equation}
\label{eq:bound_kappa_3T_new_3_RHS3}  
{\|\bSigma_{\diamond,\text{out}}\|}\leq\frac{3}{2}\tilde{\calA}\sigma_1\left(\bSigma_{L_*^\perp,L_*^\perp}\right)= \frac{3}{2}\frac{n_0}{D-d}\calA\sigma_1\left(\bSigma_{L_*^\perp,L_*^\perp}\right).
\end{equation}
This equation and the definitions of  $\hat{\kappa}_2$ (see \eqref{eq:def_kappa_1_2}) and $\hat{\kappa}_1$ (see \eqref{eq:def_kappa_1_2_new}) result in
\begin{equation}
\label
{eq:bound_kappa_3T_new_3_RHS2_2} 
\frac{\|\bSigma_{\diamond,\text{out}}\|}{\sigma_D(\bSigma)}\leq \frac{3}{2}\frac{n_0}{D-d}\calA\hat{\kappa}_2(\bSigma)
    \ \text{ and } \ 
\frac{
\|\bSigma_{\diamond,\text{out}}\|}{\sigma_d(\phi([g_1(\bSigma)]_{L_*,L_*}))}
\leq
\frac{3}{2} \frac{n_0}{D-d}\frac{
\calA}{\hat{\kappa}_1(\bSigma)}.
\end{equation}
Applying the first inequality in \eqref{eq:bound_kappa_3T_new_3_RHS2_2} 
 and then \eqref{eq:calS1_prelim} yields
 \[
4\|\bSigma_{\diamond,\text{out}}\| +\frac{4 \|\bSigma_{\diamond,\text{out}}\|^2}{\gamma\sigma_D(\bSigma)\calS}\leq \left(4+6\frac{n_0}{D-d}\frac{\calA}{\calS\gamma}\hat{\kappa}_2(\bSigma)\right)\|\bSigma_{\diamond,\text{out}}\|=\left(4+6\frac{\calR}{\gamma}\hat{\kappa}_2(\bSigma)\right)\|\bSigma_{\diamond,\text{out}}\|.
\]
We further bound the RHS of \eqref{eq:bound_kappa_3T_new_3_RHS2} 
times $\sigma_d(\bSigma_{\text{in},*})$ (for brevity) 
using the above equation, then applying the second equation in \eqref{eq:bound_kappa_3T_new_3_RHS2_2}, and at last the assumption  $\hat{\kappa}_1(\bSigma)\geq \tilde{\kappa}_1$ (see    \eqref{eq:overiterations}): 
\begin{multline}
\frac{4 \|\bSigma_{\diamond,\text{out}}\| +\frac{4 \|\bSigma_{\diamond,\text{out}}\|^2}{\gamma\sigma_D(\bSigma)\calS}}{  \frac{n_1}{d(1+\kappa_P(\bSigma,\epsilon))^2}\sigma_d(\phi([g_1(\bSigma)]_{L_*,L_*}))}\leq
\frac{\|\bSigma_{\diamond,\text{out}}\| (4+6\frac{\calR}{\gamma}\hat{\kappa}_2(\bSigma))}{  \frac{n_1}{d(1+\kappa_P(\bSigma,\epsilon))^2}\sigma_d(\phi([g_1(\bSigma)]_{L_*,L_*}))}
\\
\leq 
\frac{ (4+6\frac{\calR}{\gamma}\hat{\kappa}_2(\bSigma))}{  \frac{n_1}{d(1+\kappa_P(\bSigma,\epsilon))^2}}\frac{3}{2}\frac{n_0}{D-d}\frac{\calA}{\hat{\kappa}_1(\bSigma)}\leq (1+\kappa_P(\bSigma,\epsilon))^2\frac{ 6+9\frac{\calR}{\gamma}C_{{\kappa}_2}}{ \dssnr}\frac{\calA}{\tilde{\kappa}_1}.\label{eq:kappa3_est_bound}    
\end{multline}
Applying the definition of $\tilde{\kappa}_1$ in \eqref{eq:tildekappa1}, while respectively using the relationships $\kappa_{\text{in},*} \leq 1$ and $\kappa_2 \leq 1$, yields the two inequalities: 
\begin{equation}
\frac{\calA}{\sigma_d(\bSigma_{\text{in},*})\cdot \dssnr\cdot\tilde{\kappa}_1}\leq \frac{1}{C} \ \text{ and } \ \frac{\calR}{\gamma}\cdot \frac{\calA}{\sigma_d(\bSigma_{\text{in},*})\cdot \dssnr\cdot\tilde{\kappa}_1}\leq \frac{1}{C}.    
\label{eq:two_bounds_by_one_over_C}
\end{equation}
Combining \eqref{eq:bound_kappa_3T_new_3_RHS2}, \eqref{eq:kappa3_est_bound} (divided by $\sigma_d(\bSigma_{\text{in},*})$), \eqref{eq:two_bounds_by_one_over_C} and $\kappa_P(\bSigma,\epsilon)\leq 0.1$ (see  \eqref{eq:noisy_conclusion}), then applying 
$C_{{\kappa}_2} = 7$, and at last  
$C\geq 3340/C_E$ (see   \eqref{eq:Cvalue_noisy_final}) result in a useful bound on $\tau$: 
\begin{equation}
\label{eq:bound_kappa_3T_new_3_RHS}
\tau\leq  1.1^2\cdot \left(\frac{6}{C}+\frac{9\cdot C_{\kappa_2}}{C}\right)<\frac{83.5}{C}\leq \frac{C_E}{40}. 
\end{equation}

\textbf{Fourth step: Finalizing the proof.} 
In view of \eqref{eq:def_kappa_3_new} and  \eqref{eq:bound_kappa_3T_new_2}-\eqref{eq:kappa3_bar}, we 
write 
\begin{align*}
x &= \frac{n_1}{d}\,\frac{1}{\bigl(1-\kappa_P(\bSigma,\epsilon)\bigr)^2}
     \,\sigma_1\left(\phi\left(\bSigma_{L_*,L_*}\right)\right), 
\hspace{0.75in}\epsilon_1 = 
 \frac{\bigl\|[T(\bSigma)-\bSigma_{\diamond,\text{in}}]_{L_*,L_*}\bigr\|}
      {\sigma_d(\bSigma_{\text{in},*})},\\[0.5em]
y &= \frac{n_1}{d}\,\frac{1+C_E\left(1-{1}/{\bar{\kappa}_3(\bSigma)}\right)}
              {\bigl(1+\kappa_P(\bSigma,\epsilon)\bigr)^2}
     \,\sigma_d\left(\phi\left([g_1(\bSigma)]_{L_*,L_*}\right)\right),\hspace{0.05in}
\epsilon_2 = 
 \frac{\bigl\|[g_1(T(\bSigma))-\bSigma_{\diamond,\text{in}}]_{L_*,L_*}\bigr\|}
      {\sigma_d(\bSigma_{\text{in},*})}
\end{align*}
and reduce the proof of  $\hat{\kappa}_3(T(\bSigma))\leq C_{\kappa_3}$ to verifying 
\begin{equation}\label{eq:xyepsilon_representation}
\frac{x+\epsilon_1}{y-\epsilon_2}\leq C_{\kappa_3}. 
\end{equation}

We derive several initial estimates. Combining the definitions of $\tau$ (see \eqref{eq:def_tau}), $\epsilon_1$, $\epsilon_2$ and $y$, then applying $\bar{\kappa}_3(\bSigma) \geq 1$ (see \eqref{eq:kappa3_bar}), next    \eqref{eq:bound_kappa_3T_new_3_RHS}, and at last $C_E\leq 1$ (see Lemma~\ref{lemma:TME_contraction}) result in 
\begin{equation}\label{eq:tau_bound2}\frac{\epsilon_1+\epsilon_2}{y}= \frac{\tau}{1+C_E(1-{1}/{\bar{\kappa}_3(\bSigma)})}\leq \tau\leq \frac{C_E}{40}\leq\frac{1}{40}.\end{equation} 
Consequently, $\epsilon_2\leq y/40$ and thus $y-\epsilon_2\geq \frac{39}{40}y$. Applying the latter inequality yields 
\begin{equation}\label{eq:xyepsilon_representation2}
\frac{x+\epsilon_1}{y-\epsilon_2}= \frac{x}{y-\epsilon_2}+\frac{\epsilon_1}{y-\epsilon_2}=\frac{x}{y}+\frac{x}{y}\,\frac{\epsilon_2}{y-\epsilon_2}+\frac{\epsilon_1}{y-\epsilon_2}\leq \frac{x}{y}\left(1+\frac{40}{39}\frac{\epsilon_2}{y}\right)+\frac{40}{39}\frac{\epsilon_1}{y}.
\end{equation}
Using 
the definition 
$\hat{\kappa}_3(\bSigma)
  = {\sigma_1\left(\phi\left(\bSigma_{L_*,L_*}\right)\right)}/         {\sigma_d\left(\phi\left([g_1(\bSigma)]_{L_*,L_*}\right)\right)}$ (see  \eqref{eq:def_kappa_3_new} yields
\[
\frac{x}{y} =  \frac{\bigl(1+\kappa_P(\bSigma,\epsilon)\bigr)^2}
     {\bigl(1-\kappa_P(\bSigma,\epsilon)\bigr)^2}
\cdot
\frac{\hat{\kappa}_3(\bSigma)}
     {1+C_E\left(1-1/{\bar{\kappa}_3(\bSigma)}\right)}.
\]
Combining the above equation with \eqref{eq:xyepsilon_representation2} and 
$
\max(\epsilon_1/y,\epsilon_2/y)\leq {(\epsilon_1+\epsilon_2)}/{y}\leq \tau
$
(which follows from an intermediate inequality of \eqref{eq:tau_bound2}) gives
\begin{align}
\frac{x+\epsilon_1}{y-\epsilon_2}
\leq 
\frac{\bigl(1+\kappa_P(\bSigma,\epsilon)\bigr)^2}
     {\bigl(1-\kappa_P(\bSigma,\epsilon)\bigr)^2}
\cdot
\frac{\hat{\kappa}_3(\bSigma)}
     {1+C_E\left(1-1/{\bar{\kappa}_3(\bSigma)}\right)}
\cdot (1+2\tau)
+ 2\tau. \label{eq:kappa3_est}
\end{align}


 Finally, we will prove \eqref{eq:xyepsilon_representation} by showing that the RHS of \eqref{eq:kappa3_est} is bounded by $C_{\kappa_3}$, considering two complimentary cases. 
 
 Case 1: $1\leq \bar{\kappa}_3(\bSigma)\leq 2$. 
 We further bound the RHS \eqref{eq:kappa3_est} using $\kappa_P(\bSigma,\epsilon)\leq 0.1$ (see  \eqref{eq:noisyassumption20}), the trivial bound ${C_E(1-1/\bar{\kappa}_3(\bSigma))}
\geq 0$, 
 $\tau\leq 1/40$ (see \eqref{eq:tau_bound2}),   $\hat{\kappa}_3(\bSigma)\leq 16  \kappa_{\text{in},*}$, which follows from \eqref{eq:kappa3_bar} and  $\bar{\kappa}_3(\bSigma)\leq 2$, and the definition of $C_{\kappa_3}$ (see \eqref{eq:c_kappa3}), to obtain 
 \begin{align*}
\frac{x+\epsilon_1}{y-\epsilon_2}
\leq \left(\frac{1.1}{0.9}\right)^2 \cdot 8\kappa_{\text{in},*}\cdot 1.05+0.05\leq C_{\kappa_3}.
 \end{align*}

  Case 2: $\bar{\kappa}_3(\bSigma)\geq 2$. We bound the first term 
in the RHS of \eqref{eq:kappa3_est} applying $\kappa_P(\bSigma,\epsilon)\leq C_E/20$ (see \eqref{eq:noisyassumption20}),  $\tau\leq C_E/40$ (see \eqref{eq:bound_kappa_3T_new_3_RHS}), and  $\hat{\kappa}_3(T(\bSigma))\leq C_{\kappa_3}$ (see \eqref{eq:overiterations_noisy}), and then using: 
$(1+x/20)^2(1+x/20)(1+x/10)\leq (1-x/20)^2(1+x/2) $ for  $x\leq 1$:
   \[
\frac{(1+\kappa_P(\bSigma,\epsilon))^2}{(1-\kappa_P(\bSigma,\epsilon))^2} \frac{\hat{\kappa}_3(T(\bSigma))}{1+C_E/2}(1+2\tau)
\leq 
\frac{(1+C_E/20)^2}{(1-C_E/20)^2} \frac{C_{\kappa_3}}{1+C_E/2}\left(1+\frac{C_E}{20}\right)
\leq  \frac{C_{\kappa_3}}{1+C_E/10}.
 \]
The second term on the RHS of \eqref{eq:kappa3_est} is clearly bound by $C_E/20$ since $\tau\leq C_E/40$. We show that the sum of these two bounds is below  $C_{\kappa_3}$ and thus conclude the proof. Indeed, using $C_{\kappa_3}\geq 1$ (see \eqref{eq:c_kappa3}) and $C_E\leq 1$: $C_{\kappa_3}-\frac{C_{\kappa_3}}{1+C_E/10}=\frac{C_E}{10}\cdot \frac{C_{\kappa_3}}{1+C_E/10} > \frac{C_E}{10}\cdot \frac{1}{2}=\frac{C_E}{20}$.
\qed

\section{Proof of Auxiliary Lemmas}\label{sec:proof_lemmas}

\begin{proof}[Proof of Lemma~\ref{lemma:TME_contraction}:]

We will prove the following general statement: For a dataset $\calX\subset\reals^p$, which is not contained in a union of two $(p-1)$-subspaces, with $|\calX|=n$ and TME solution $\bSigma_* \in S_{++}(p)$, 
there existence of a constant $0 < C_E < 1$ such that for all $\bSigma \in S_{+}(p)$, 
\begin{equation}\label{eq:noisy_assumption1_general}
\frac{\sigma_p(\bSigma_{*}^{-0.5}T_1(\bSigma)\,\bSigma_{*}^{-0.5})}{\sigma_p(\bSigma_{*}^{-0.5}\bSigma\,\bSigma_{*}^{-0.5})}\geq\frac{n}{p} \left(1+C_E \cdot\left(1- \frac{\sigma_p(\bSigma_{*}^{-0.5}\bSigma\,\bSigma_{*}^{-0.5})}{\sigma_1(\bSigma_{*}^{-0.5}\bSigma\,\bSigma_{*}^{-0.5})}\right)\right).
\end{equation}
Lemma~\ref{lemma:TME_contraction} is a special case of \eqref{eq:noisy_assumption1_general} with $p=d$ and $\calX$ replaced by $\{\bU_{L_*}^\top\xbm\mid\xbm\in\Xcal_{\text{in}}\}$. 

We note that for $\bA \in \R^{p\times p}$, the TME operator $T_1(\bSigma)  \equiv \sum_{\xbm\in\Xcal}{\xbm\xbm^\top}/{\xbm^\top\bSigma^{-1}\xbm}$ and the TME solution, $\bSigma_*$,  are $\bA$-equivariant in the following way: If $\tilde{\calX}=\{\bA\bx:\bx\in\calX\}$, and $\tilde{T}_1$ and $\tilde{\bSigma}_*$ denote the corresponding TME operator and solution for $\tilde{\calX}$, then  $\tilde{T}_1(\bA\bSigma \bA^\top)=\bA T_1(\bSigma)\bA^\top$ for any $\bSigma \in S_{++}(p)$, and consequently 
\begin{equation}
\label{eq:tme_equivariance}
\tilde{\bSigma}_*=\bA\bSigma_*\bA^\top.     
\end{equation}
Using this $\bA$-equivariance property, we  may assume without loss of generality that $\bSigma_{*}=\bI$ by transforming the data. We thus reduce \eqref{eq:TME_definition} and \eqref{eq:noisy_assumption1_general} to the following equations, respectively:
\begin{equation}\sum_{\bx\in\calX}{\bx\bx^\top}/{\|\bx\|^2}=\frac{n}{p}\bI,\label{eq:TME_contraction_fixed}
\end{equation}
\begin{equation}
\label{eq:noisy_assumption1_general_simple}
\frac{\sigma_p(T_1(\bSigma))}{\sigma_p(\bSigma)}\geq \frac{n}{p}\left( 1+C_E \cdot\left( 1-\frac{\sigma_p(\bSigma)}{\sigma_1(\bSigma)}\right)\right).
\end{equation}

To estimate $\sigma_p(T_1(\bSigma))$, we denote by $\bv_i$ the $i$-th unit eigenvector of $\bSigma$ corresponding to $\sigma_i(\bSigma)$, where the eigenvalues are ordered in decreasing order. 
We note that for $\bx \in \calX$, 
\begin{multline*}
\bx^\top\bSigma^{-1}\bx\!=\!\sum_{i=1}^p \sigma_i(\bSigma)^{-1}|\bx^\top \bv_i|^2 \!\leq\! \sigma_1(\bSigma)^{-1}|\bx\!^\top\!\bv_1|^2\!+\!\sum_{i=2}^p \sigma_p(\bSigma)^{-1}|\bx\!^\top \! \bv_i|^2\!=\! \sigma_p(\bSigma)^{-1}\|\bx\|^2\\+(\sigma_1(\bSigma)^{-1}-\sigma_p(\bSigma)^{-1})|\bx^\top \bv_1|^2  \!=\!\sigma_p^{-1}(\bSigma) (\|\bx\|^2-|\bx^\top \bv_1|^2(1-\sigma_p(\bSigma) \sigma_1(\bSigma)^{-1})).  
\end{multline*}
Consequently,
\begin{equation}
T_1(\bSigma) \equiv \sum_{\bx\in\calX}\frac{\bx\bx^\top}{\bx^\top\bSigma^{-1}\bx}\psdgeq
\sigma_p(\bSigma)\sum_{\bx\in\calX}\frac{\bx\bx^\top}{{\|\bx\|^2}-|\bx^\top\bv_1|^2(1-\frac{\sigma_p(\bSigma)}{\sigma_1(\bSigma)})}.
\label{eq:assist_T1_bound}    
\end{equation}
 Note 
$f_{\bx}(z):={1}/({{\|\bx\|^2}-|\bx^\top\bv_1|^2\cdot z})$ is strictly increasing and convex on $[0,1)$ and thus $f_{\bx}(z)\geq (1+f_{\bx}'(0)\, z)/\|\bx\|^2$, where $f_{\bx}'(0)=(\bx^\top \bv_1)^2/\|\bx\|^4$. 
Setting $z=1-{\sigma_p(\bSigma)}/{\sigma_1(\bSigma)}$,  
  $C_E:=\sigma_p\left( \sum_{\bx\in\calX}f_{\bx}'(0){\bx\bx^\top}/{\|\bx\|^2}\right)\cdot p/n$,  
and applying \eqref{eq:assist_T1_bound}, then the latter observation on $f_{\bx}$ and the definition of $z$, next \eqref{eq:TME_contraction_fixed}, and at last the definitions of $x$ and $C_E$ 
yields
\begin{align}\nonumber
&\sigma_p(T_1(\bSigma))\geq \sigma_p(\bSigma)\sigma_p\left(\sum_{\bx\in\calX}\frac{\bx\bx^\top}{{\|\bx\|^2}-|\bx^\top\bv_1|^2\left(1-\frac{\sigma_p(\bSigma)}{\sigma_1(\bSigma)}\right)}\right) 
\\ 
\label{eq:lemma2sigmad}
&\geq  \sigma_p(\bSigma)\sigma_p\left( \sum_{\bx\in\calX}(1+f_{\bx}'(0)z)\frac{\bx\bx^\top}{\|\bx\|^2}\right)
\\
\nonumber
&=\sigma_p(\bSigma)\left(\frac{n}{p}+z\sigma_p\left( \sum_{\bx\in\calX}f_{\bx}'(0)\frac{\bx\bx^\top}{\|\bx\|^2}\right)\right)
= \sigma_p(\bSigma)\frac{n}{p}\left(1+C_E\left(1-\frac{\sigma_p(\bSigma)}{\sigma_1(\bSigma)}\right)\right).
\end{align}

To make this bound useful, we also show that $C_E$ is strictly positive. 
Observe that $f_{\bx}'(0)=0$ only when $\bx \perp \bv_{1}$. 
Hence, $C_E=0$ only if all $\bx \in \calX$ with $\bx \not\perp \bv_{1}$ lie on a $(p-1)$-subspace of $\mathbb{R}^p$. Since all $\bx \in \calX$ with $\bx \perp \bv_{1}$ lie on a $(p-1)$-subspace of $\mathbb{R}^p$ and we assumed that $\calX$ is not contained in the union of two $(p-1)$-dimensional subspaces of $\mathbb{R}^p$, $C_E>0$. In addition, we may assume \(C_E \leq 1\), since otherwise we can replace \(C_E\) by \(\min(C_E,1)\). Therefore, \eqref{eq:noisy_assumption1_general_simple} is established and Lemma~\ref{lemma:TME_contraction} is proved.
\end{proof}

\begin{proof}[Proof of Lemma \ref{lemma:g1}:]
(a) A direct application of~\cite[Theorem 1.3.3]{bhatia2009positive} implies that $g_1(\bSigma) \in S_+$ whenever $\bSigma \in S_+$.  It is also obvious by the definition of $g_1(\bSigma)$ in \eqref{eq:defg1} that $\rank(g_1(\bSigma))\leq d$. 
In addition, 
we claim that $g_2(\bSigma)$ is also positive semidefinite and its rank is at most $D-d$. This can be seen by following its definition in \eqref{eq:defg2} and further expressing $g_2(\bSigma)$ as follows: 
\begin{align*}&
\begin{pmatrix}
\bSigma^{-1}_{L_*^\perp,L_*^\perp}\bSigma_{L_*^\perp,L_*}\\
 \bI 
\end{pmatrix}
\bSigma_{L_*^\perp,L_*^\perp}\Big(\bSigma^{-1}_{L_*^\perp,L_*^\perp}\bSigma_{L_*^\perp,L_*},\bI\Big)\\=&\begin{pmatrix}
\bSigma^{-1}_{L_*^\perp,L_*^\perp}\bSigma_{L_*^\perp,L_*}\\
 \bI 
\end{pmatrix}\bSigma_{L_*^\perp,L_*^\perp}^{0.5}\left(\begin{pmatrix}
\bSigma^{-1}_{L_*^\perp,L_*^\perp}\bSigma_{L_*^\perp,L_*}\\
 \bI 
\end{pmatrix}\bSigma_{L_*^\perp,L_*^\perp}^{0.5}\right)^\top.
\end{align*}
Combining  $g_2(\bSigma) \psdgeq \bm{0}$ and the relationship   $g_1(\bSigma)=\bSigma-g_2(\bSigma)$, we conclude that $g_1(\bSigma)\psdleq \bSigma$.

If $\bSigma \in S_{++}$, then by  \eqref{eq:defg2} and \eqref{eq:defg1}, 
$\rank(g_2(\bSigma))=D-d$  and  $\rank(g_1(\bSigma))=d$. 
Combining this with $g_1(\bSigma) \in S_+$ implies that  $\sigma_d(g_1(\bSigma))>0$, so $g_1(\bSigma) \in S_{++}$.

(b) Since the range of $\bSigma-\bX$ is 
contained in $L_*$, we can express $\bX$ as follows:
\begin{equation}
\bX=\begin{pmatrix}
\bX_{L_*,L_*} & \bSigma_{L_*,L_*^\perp}  \\
\bSigma_{L_*^\perp,L_*} & \bSigma_{L_*^\perp,L_*^\perp} 
\end{pmatrix}.
\label{eq:express_X}    
\end{equation}
Since $\bX \in S_+$,  \cite[Theorem 1.3.3]{bhatia2009positive} implies that $\bX_{L_*,L_*}\psdgeq \bSigma_{L_*,L_*^\perp}\bSigma^{-1}_{L_*^\perp,L_*^\perp}\bSigma_{L_*^\perp,L_*}$, and as a result,
\begin{align*}
&[g_1(\bSigma)-(\bSigma-\bX)]_{L_*,L_*}=(\bSigma_{L_*,L_*}-\bSigma_{L_*,L_*^\perp}\bSigma^{-1}_{L_*^\perp,L_*^\perp}\bSigma_{L_*^\perp,L_*})-(\bSigma_{L_*,L_*}-\bX_{L_*,L_*})\\=&\bX_{L_*,L_*}-\bSigma_{L_*,L_*^\perp}\bSigma^{-1}_{L_*^\perp,L_*^\perp}\bSigma_{L_*^\perp,L_*}\psdgeq 0.
\end{align*}
This observation, the fact $g_1(\bSigma)\in S_+$ and \eqref{eq:express_X} imply  $g_1(\bSigma) \psdgeq \bSigma-\bX$.\end{proof}
\begin{proof}[Proof of Lemma~\ref{lemma:matrixperturbation}:] 
(a) This part follows from the Courant-Fischer min-max theorem~\cite{tao2012topics}[Proposition 1.3.2]. For completeness we verify it as follows: 
\begin{align*}
&\sigma_{D-k+1}(\bSigma)=\min_{\dim(V)=k} \ \max_{\bv\in V: \|\bv\|=1}\bv^\top\bSigma\bv=\min_{\bV\in O(D,k)} \ \max_{\bu\in\reals^k: \|\bu\|=1}\bu^\top\bV^\top\bSigma\bV\bu\\=&\min_{\bV\in O(D,k)} \sigma_1(\bV^\top\bSigma\bV) \leq \sigma_1(\bU_0^\top\bSigma\bU_0).\end{align*}
Lastly, $\sigma_1(\bSigma)\geq \sigma_1(\bU_0^\top\bSigma\bU_0)$ since both $\bU_0^\top\bSigma\bU_0$ and $\bSigma$ are positive semidefinite and thus $\sigma_1(\cdot)=\|\cdot\|$. 

On the other hand, \begin{align*}&\sigma_{k}(\bSigma)=\max_{\dim(V)=k} \ \min_{\bv\in V: \|\bv\|=1}\bv^\top\bSigma\bv=\max_{\bV\in O(D,k)} \ \min_{\bu\in\reals^k: \|\bu\|=1}\bu^\top\bV^\top\bSigma\bV\bu\\=&\max_{\bV\in O(D,d)}\sigma_k(\bV^\top\bSigma\bV)\geq 
\sigma_k(\bU_0^\top\bSigma\bU_0)\end{align*} and the part $\sigma_k(\bU_0^\top\bSigma\bU_0)\geq \sigma_{D}(\bSigma)$ follows from $\sigma_D(\bSigma)=\min_{\bv:  \|\bv\|=1}\bv^\top\bSigma\bv$. 

(b) This is a well known eigenvalue perturbation inequality, see e.g., (1.63) of~\cite{tao2012topics}. 
\end{proof}
\begin{proof}[Proof of Lemma~\ref{lemma:subspace}:]
We denote  $\kappa_0=\sigma_d\left(\bSigma_{L_*,L_*}\right)/\sigma_1\left(\bSigma_{L_*^\perp,L_*^\perp}\right)$. We need to prove  
$\sin\angle(\hat{L},L_*)\leq 2/\sqrt{\kappa_0}.$  
Assume on the contrary that  $\sin\angle(\hat{L},L_*)> 2/\sqrt{\kappa_0}$. We note that  $\sin\angle(\hat{L}^\perp,L_*^\perp)=\sin\angle(\hat{L},L_*)$ and thus there exists a unit vector $\bv\in \hat{L}^\perp$ such that $\sin\angle (\bv, L_*^\perp)> 2/\sqrt{\kappa_0}$. We represent this vector as 
$\bv=\sin\theta\bv_1+\cos\theta\bv_2$, 
where $\bv_1\in L_*$ and $\bv_2\in L_*^\perp$. Our assumption then implies that 
 $\sin \theta > 2/\sqrt{\kappa_0}$. 
For simplicity, we assume 
without loss of generality (WLOG) that $\sigma_1\left(\bSigma_{L_*^\perp,L_*^\perp}\right)=1$
and thus $\sigma_d\left(\bSigma_{L_*,L_*}\right)=\kappa_0$. We thus obtain that $\bv_1^\top\bSigma\bv_1\geq\kappa_0$ and $\bv_2^\top\bSigma\bv_2\leq 1$. Since $\bSigma \in S_{+}$, we apply Cauchy-Schwartz to $\bSigma^{0.5} \bv_1$ and $\bSigma^{0.5} \bv_2$ to conclude that   $|\bv_1^\top\bSigma\bv_2|\leq \sqrt{\bv_1^\top\bSigma\bv_1}\sqrt{\bv_2^\top\bSigma\bv_2}$. Consequently,  
\begin{align*}
&\bv^\top\bSigma\bv
=\sin^2\theta\bv_1^\top\bSigma\bv_1+\cos^2\theta\bv_2^\top\bSigma\bv_2+2\sin\theta\cos\theta\bv_1^\top\bSigma\bv_2\\
&\geq 
\sin^2\theta\bv_1^\top\bSigma\bv_1+\cos^2\theta\bv_2^\top\bSigma\bv_2-2\sin\theta\cos\theta
\sqrt{\bv_1^\top\bSigma\bv_1}\sqrt{\bv_2^\top\bSigma\bv_2}\\
&\geq 
\left(\sin\theta\sqrt{\bv_1^\top\bSigma\bv_1}-\cos\theta\sqrt{\bv_2^\top\bSigma\bv_2}\right)^2
\geq (\sin\theta\sqrt{\kappa_0}-\cos\theta)^2 > (2-1)^2=1.
\end{align*}

We show on the other hand that $\bv^\top\bSigma\bv \leq 1$ and thus obtain a contradiction, which concludes the proof. Indeed, since $\bv\in \hat{L}^\perp$, it is spanned by the smallest $D-d$ eigenvectors of $\bSigma$, and consequently  $\bv^\top\bSigma\bv\leq \sigma_{d+1}(\bSigma)\leq \sigma_1(\bU_{L_*^{\perp}}^\top\bSigma\bU_{L_*^{\perp}})=1$, where the second inequality follows from Lemma~\ref{lemma:matrixperturbation}(a)). 
\end{proof}
\begin{proof}[Proof of Lemma~\ref{lemma:sigmaD}:]
Let $a=({(x+y)-\sqrt{(x-y)^2+4z^2}})/{2}$ and  
note that $a\leq y$:
$$
2a ={(x+y)-\sqrt{(x-y)^2+4z^2}}\leq {(x+y)-\sqrt{(x-y)^2}}\leq {(x+y)-(x-y)}=2y.
$$
It thus follows from the definition of $y$ and the above inequality that 
\begin{equation}
\label{eq:L*-a_in_S+}
\bSigma_{L_*,L_*}-a\bI \in S_{+}.    
\end{equation}
We also note that 
\begin{equation}
x-\frac{z^2}{y-a}=x-\frac{2z^2}{(y-x)+\sqrt{(x-y)^2+4z^2}}=x-\frac{-(y-x)+\sqrt{(x-y)^2+4z^2}}{2}=a.
\label{eq:identity_axyz}   
\end{equation}

Let 
\[
\bSigma_0=\begin{pmatrix}
\bSigma_{L_*,L_*}-a\bI & \bSigma_{L_*,L_*^\perp} \\
\bSigma_{L_*^\perp,L_*} & \bSigma_{L_*^\perp,L_*}(\bSigma_{L_*,L_*}-a\bI)^{-1}\bSigma_{L_*^\perp,L_*}^\top,
\end{pmatrix}
\]
so that 
\begin{equation}
\bSigma-\bSigma_0=\begin{pmatrix}
a\bI & 0 \\
0 & \bSigma_{L_*^\perp,L_*^\perp}-\bSigma_{L_*^\perp,L_*}(\bSigma_{L_*,L_*}-a\bI)^{-1}\bSigma_{L_*^\perp,L_*}^\top.
\end{pmatrix}.  
\label{eq:block_Sigma_minus_S0}
\end{equation}
We will bound from below the eigenvalues of $\bSigma_0$ and $\bSigma-\bSigma_0$ and use these bounds to conclude \eqref{eq:simgaD_bound}. 
We first note that $\bSigma_0 \in S_+$ as follows:
\begin{align*}
&\bSigma_0=\begin{pmatrix}
\bSigma_{L_*,L_*}-a\bI \\
\bSigma_{L_*^\perp,L_*} 
\end{pmatrix} (\bSigma_{L_*,L_*}-a\bI)^{-1}\begin{pmatrix}
\bSigma_{L_*,L_*}-a\bI & 
\bSigma_{L_*,L_*^\perp}.
\end{pmatrix}\\=&\Bigg(\begin{pmatrix}
\bSigma_{L_*,L_*}-a\bI \\
\bSigma_{L_*^\perp,L_*} 
\end{pmatrix}(\bSigma_{L_*,L_*}-a\bI)^{-0.5}\Bigg)\Bigg(\begin{pmatrix}
\bSigma_{L_*,L_*}-a\bI \\
\bSigma_{L_*^\perp,L_*} 
\end{pmatrix}(\bSigma_{L_*,L_*}-a\bI)^{-0.5}\Bigg)^\top.
\end{align*}

Next, we note that the eigenvalues of $\bSigma-\bSigma_0$ consist of $a$ (with multiplicity $d$) and the eigenvalues of the bottom right matrix in \eqref{eq:block_Sigma_minus_S0},  
which are bounded below by $a$ as follows 
\begin{align*}
&\sigma_{D-d}(\bSigma_{L_*^\perp,L_*^\perp}-\bSigma_{L_*^\perp,L_*}(\bSigma_{L_*,L_*}-a\bI)^{-1}\bSigma_{L_*^\perp,L_*}^\top)
\geq \sigma_{D-d}\left(\bSigma_{L_*^\perp,L_*^\perp}\right)\\&-\sigma_1(\bSigma_{L_*^\perp,L_*}(\bSigma_{L_*,L_*}-a\bI)^{-1}\bSigma_{L_*^\perp,L_*}^\top))
\geq  x- \frac{\|\bSigma_{L_*^\perp,L_*}\|^2}{\sigma_{d}\left(\bSigma_{L_*,L_*}\right)-a}\geq x-\frac{z^2}{y-a}=a.
\end{align*}
The above first inequality uses Lemma \ref{lemma:matrixperturbation}(b) and the observation that \eqref{eq:L*-a_in_S+} implies that $\sigma_1$ in this inequality is applied to a positive semidefinite matrix and  thus it coincides with its spectral norm. The second inequality follows from the sub-multiplicative inequality for $\sigma_1(\cdot)=\|\cdot\|$, where we apply $\sigma_1$ to positive semidefinite matrices, and the fact that $\sigma_1(\bSigma_{L_*,L_*}-a\bI) = (\sigma_d(\bSigma_{L_*,L_*})-a)^{-1}$.
The last equality is \eqref{eq:identity_axyz}.

We showed that $\bSigma_0 \psdgeq 0$ and $\bSigma-\bSigma_0 \psdgeq a \bI$, and consequently $\bSigma \psdgeq aI$, which proves \eqref{eq:simgaD_bound}. Lastly, we note that if $z\leq \sqrt{xy}/2\leq \max(x,y)/2$, then 
\begin{align*}
&a=\frac{2xy-2z^2}{(x+y)+\sqrt{(x-y)^2+4z^2}}\geq \frac{2xy-2(\sqrt{xy}/2)^2}{(x+y)+|x-y|+2z} \geq \frac{xy}{2\max(x,y)+2z}\\\geq& \frac{xy}{2\max(x,y)+\max(x,y)}=\frac{\min(x,y)}{3}. 
\qedhere
\end{align*}
\end{proof}
\begin{proof}[Proof of Lemma~\ref{lemma:lowrank}:]
Since $\bSigma\in S_+$ and $\rank(\bSigma)=d$, $\bSigma=\bU\bU^\top$ with $\bU\in\reals^{D\times d}$. 
WLOG, $L_*$ is spanned by the first  $d$ standard basis vectors $\bm{e}_1$, $\ldots$, $\bm{e}_d$ in $\R^D$. We write $\bU=[\bU_1;\bU_2]$ with $\bU_1\in\reals^{d\times d}$ and $\bU_2\in\reals^{(D-d)\times d}$, so this assumption means that $\bU_1\bU_1^\top=\bSigma_{L_*,L_*}$ and consequently $\bU_1=\bSigma_{L_*,L_*}^{0.5}\bR$, where $\bR \in O(d)$. Since $\bU_2\bU_1^\top=\bSigma_{L_*^\perp,L_*}$, $\bU_2=\bSigma_{L_*^\perp,L_*}\bSigma_{L_*^{\perp},L_*}^{-0.5}\bR$. Noting further that $\bSigma_{L_*^{\perp},L_*^{\perp}}=\bU_2\bU_2^\top$, we conclude the lemma.
\end{proof}
\begin{proof}[Proof of Lemma~\ref{lemma:matrixeigenvalue}:]
The first inequality follows from the facts:  $\bA \in S_+(d)$ and $\|\bA \bx\| \geq \sigma_d \|\bx\|$ for any $\bx \in \R^d$.
Applying this repetitively to $\bA \bB \bA \equiv \bA(\bB(\bA))$ concludes the proof. 

To prove the other inequality we assume  that $\bA$ is invertible, since otherwise $\sigma_d(\bA \bB \bA)=0$ and the inequality is trivial. We form   $\bar{\bu}:=\bA^{-1}\bu_d(\bB)$ and note that  
$\bar{\bu}^\top(\bA\bB\bA)\bar{\bu}=\sigma_d(\bB)$  and  $\|\bar{\bu}\|\geq {\sigma_1^{-1}(\bA)}$.  
Consequently,
$\sigma_d\big(\bA\bB\bA\big)\leq {\bar{\bu}^\top\bA\bB\bA\bar{\bu}}/{\|\bar{\bu}\|^2}\leq \sigma_1^2(\bA)\sigma_d(\bB)$.
\end{proof}

\begin{proof}[Proof of Lemma~\ref{lemma:special}:]
(a) Recall that the TME solution is the minimizer of the objective function
\[
F(\bSigma)=\mathbb{E}_{\bx}\log(\bx^\top\bSigma^{-1}\bx)+\frac{1}{D}\log\det(\bSigma),
\]
Recall that using the basis vectors for $L_*$ and $L_*^\perp$, we may write $\bSigma$ as 
\[
\bSigma=\begin{pmatrix}
\bSigma_{L_*,L_*} & \bSigma_{L_*,L_*^\perp} \\
\bSigma_{L_*^\perp,L_*} & \bSigma_{L_*^\perp,L_*^\perp} 
\end{pmatrix}.
\]
We define 
\begin{equation*}
\bSigma'=\begin{pmatrix}
\bSigma_{L_*,L_*} & -\bSigma_{L_*,L_*^\perp} \\
-\bSigma_{L_*^\perp,L_*} & \bSigma_{L_*^\perp,L_*^\perp} 
\end{pmatrix}.    
\end{equation*}

We note that 
\[
\bSigma'=
\begin{pmatrix}
\bI & \bm{0} \\
\bm{0} & -\bI
\end{pmatrix}
\bSigma
\begin{pmatrix}
\bI & \bm{0} \\
\bm{0} & -\bI
\end{pmatrix}
\quad \text{and} \quad
\bSigma^{-1}=
\begin{pmatrix}
\bI & \bm{0} \\
\bm{0} & -\bI
\end{pmatrix}
{\bSigma'}^{-1}
\begin{pmatrix}
\bI & \bm{0} \\
\bm{0} & -\bI
\end{pmatrix},\]
that is, the diagonal blocks of $\bSigma$ and $\bSigma'$, and also $\bSigma^{-1}$ and ${\bSigma'}^{-1}$, are the same and the off-diagonal blocks have opposite signs. Consequently,  
for $\bx=[\bx_{L_*},\bx_{L_*^\perp}]$ and ${\bx'}=[\bx_{L_*},-\bx_{L_*^\perp}]$,  $$\bx^\top\bSigma^{-1}\bx=\bx^{'\,T}(\bSigma')^{-1}\bx' \ \text{ and } \ \bx^\top(\bSigma')^{-1}\bx=\bx^{'\,T}\bSigma^{-1}\bx'.$$ 
Combining these observations with the fact that $\bx$ and $\bx'$ occur with equal probability results in $F({\bSigma})=F(\bSigma')$. Therefore, if $\bSigma_*$ is a TME solution, then $\bSigma_*'$ is also a TME solution.

By the geodesic convexity of $F$~\cite{zhang2016robust,wiesel2015structured}, the geometric mean of ${\bSigma}_*$ and $\bSigma_{*}'$, is also a TME solution. This mean is defined by   
$\bSigma_* \# \bSigma_*' := \bSigma_*^{1/2}\left( \bSigma_*^{-1/2} \bSigma_*' \bSigma_*^{-1/2} \right)^{1/2} \bSigma_*^{1/2}$. 
To conclude the proof, we show that $\bSigma_* \# \bSigma_*'$ is block diagonal, that is, 
$[\bSigma \# \bSigma']_{L_*, L_*^\perp} = \bm{0}$ and $[\bSigma \# \bSigma']_{L_*^\perp, L_*} = \bm{0}.$ This observation follows from the next  lemma, which we prove later on.
\begin{lemma}
\label{lem:geo_mean}
If $\bA$, $\bB \in S_{++}(D)$, then 
\begin{equation}
    \label{eq:geo_mean}
    \bA \# \bB = \left({\bA + \bB}\right) \# (\bA^{-1} + \bB^{-1})^{-1}
\end{equation}   
\end{lemma}
We apply this lemma to $\bSigma_*$ and $\bSigma_*'$. Due to the opposite signs of diagonal blocks of  $\bSigma_*$ and $\bSigma_*'$ and their inverses, both $\bSigma_* + \bSigma_*'$ and $({\bSigma_*}^{-1} + {\bSigma_*'}^{-1})^{-1}$ are block diagonal, and consequently, their geometric mean is block diagonal and the claim is proved. 
Since the TME solution is unique up to scaling, it must be block diagonal.

(b) WLOG we may assume that 
\begin{equation}
\label{eq:TME_rot_invariance}
[\bSigma^{(\text{out})}]_{L_*^\perp,L_*^\perp}=\bI.
\end{equation}
If not, we may apply $T(\bx)=P_{L_*}\bx+\bU_{L_*^\perp}(\bSigma^{(\text{out})}_{L_*^\perp,L_*^\perp})^{-0.5}\bU_{L_*^\perp}^\top\bx$ so that this condition is satisfied. 

First of all, we will show that for a TME solution $\bSigma_*$, $[\bSigma_*]_{L_*^\perp,L_*^\perp}$ is a scalar multiple of $\bSigma^{(\text{out})}_{L_*^\perp,L_*^\perp}$. 
In view of the equivariance property of TME (see \eqref{eq:tme_equivariance}) and \eqref{eq:TME_rot_invariance}, if $\bSigma_*$ is a TME solution, then for any orthogonal matrix $\bU\in\reals^{(D-d)\times (D-d)}$, 
\[
\hat{\bSigma}_*=\begin{pmatrix}
[\bSigma_*]_{L_*,L_*} & \bm{0} \\
\bm{0} & \bU^\top[\bSigma_*]_{L_*^\perp,L_*^\perp}\bU
\end{pmatrix}
\]
is also a TME solution. Considering the uniqueness of the solution, $\hat{\bSigma}_*$ is a scalar multiple of $\bSigma_*$, that is, for any orthogonal matrix $\bU$,   $\bU^\top[\bSigma_*]_{L_*^\perp,L_*^\perp}\bU$ is a scalar multiple of $[\bSigma_*]_{L_*^\perp,L_*^\perp}$. As a result,  $[\bSigma_*]_{L_*^\perp,L_*^\perp}$ is a scalar matrix, and in view of \eqref{eq:TME_rot_invariance}, we proved the first statement of Lemma~\ref{lemma:special}.   

Second, we will show that the condition number of  $[\bSigma_*]_{L_*,L_*}$, $\frac{\sigma_1([\bSigma_*]_{L_*,L_*})}{\sigma_d([\bSigma_*]_{L_*,L_*})}$, is bounded. We note that  \[
\bSigma_*=\begin{pmatrix}
[\bSigma_*]_{L_*,L_*} & \bm{0} \\
\bm{0} & [\bSigma_*]_{L_*^\perp,L_*^\perp}
\end{pmatrix},\,\, \bSigma_*^{-1}=\begin{pmatrix}
[\bSigma_*]_{L_*,L_*}^{-1} & \bm{0} \\
\bm{0} & [\bSigma_*]_{L_*^\perp,L_*^\perp}^{-1}\end{pmatrix}.
\] 
Furthermore, we let $\tilde{\bx}=[\bSigma_*]_{L_*,L_*}^{-0.5}\bU_{L_*}^\top\bx$ (recall that $\bU_{L_*} \in \R^{D \times d}$ was defined in Section \ref{sec:notation}), 
and note that if $\bx$ is an inlier, then  $\bx^\top\bSigma_*^{-1}\bx=\|\tilde{\bx}\|^2$, and if $\bx$ is an outlier, then  $\bx^\top\bSigma_*^{-1}\bx=\|\tilde{\bx}\|^2+\|([\bSigma_*]_{L_*^\perp,L_*^\perp})^{-0.5}\bU_{L_*^\perp}^\top\bx\|^2$. Conjugating both sides of the the fixed-point equation in \eqref{eq:TME_definition} by $\bSigma^{-1/2}$, taking the expectation,  restricting it to the $d\times d$ submatrix represented by $[\cdot]_{L_*,L_*}$ (that is, using the variables $\tilde{\bx}$), and using the latter observation on $\bx^\top\bSigma_*^{-1}\bx$ for inliers and outliers, result in 
\begin{equation}\label{eq:TME_cond}
\frac{n_1}{N}\mathbb{E}_{\bx\in\calX_{\text{in}}}\frac{\tilde{\bx}\tilde{\bx}^\top}{\|\tilde{\bx}\|^2}+\frac{n_0}{N}\mathbb{E}_{\bx\in\calX_{\text{out}}}\frac{\tilde{\bx}\tilde{\bx}^\top}{\|\tilde{\bx}\|^2+\|([\bSigma_*]_{L_*^\perp,L_*^\perp})^{-0.5}\bU_{L_*^\perp}^\top\bx\|^2}=\frac{1}{D}\bI_{d\times d}.
\end{equation}

Next, we show that $\mathbb{E}_{\bx\in\calX_{\text{in}}}{\tilde{\bx}\tilde{\bx}^\top}/{\|\tilde{\bx}\|^2}$ and $\mathbb{E}_{\bx\in\calX_{\text{out}}}{\tilde{\bx}\tilde{\bx}^\top}/{\|\tilde{\bx}\|^2+\|([\bSigma_*]_{L_*^\perp,L_*^\perp})^{-0.5}\bU_{L_*^\perp}^\top\bx\|^2}$ have the same eigenvectors, but their corresponding eigenvalues are reversely ordered. 
The proof of this claim is based on  \eqref{eq:TME_cond} and the following lemma, which is proved later on. 

\begin{lemma}
\label{lemma:S_M}
Assume that $\bx\sim\mathcal N(0,\bM)$, where $\bM\in S_{++}(d)$, $Y$ is an independent nonnegative random variable, and let $\bS$ be the following matrix:
\[
\bS \;:=\; \mathbb{E}\!\left[\frac{\bx\bx^\top}{\|\bx\|^2+Y}\right]
\equiv \mathbb{E}_{\bx,Y}\!\left[\frac{\bx\bx^\top}{\|\bx\|^2+Y}\right].
\]
Then $\bS$ and $\bM$ have the same eigenspaces  
and their eigenvalue orderings are the same.
\end{lemma}

Let $\overline{\bSigma}_*^{(\text{in})}:=([\bSigma_*]_{L_*,L_*})^{-1/2} [\bSigma^{(\text{in})}]_{L_*,L_*} ([\bSigma_*]_{L_*,L_*})^{-1/2}.$ We note that for $\bx \in \calX_{\text{in}}$, $\tilde{\bx}$ is sampled from 
$N\!(0,\overline{\bSigma}_*^{(\text{in})}).$ 
Therefore, by Lemma~\ref{lemma:S_M}, 
\begin{equation}
\overline{\bSigma}_*^{(\text{in})} \text{ and }   \mathbb{E}_{\bx \in \calX_{\text{in}}}\!\left[{\tilde{\bx}\tilde{\bx}^\top}/{\|\tilde{\bx}\|^2}\right]
\text{ have the same eigenvectors and order of eigenvalues.
} 
\label{eq:S_M_apply1}
\end{equation}

Let $\overline{\bSigma}_*^{(\text{out})}:=([\bSigma_*]_{L_*,L_*})^{-1/2} [\bSigma^{(\text{out})}]_{L_*,L_*} ([\bSigma_*]_{L_*,L_*})^{-1/2}.$ 
We note that the distribution of $\bx \in \mathcal{X}_{\text{out}}$ can be decomposed into that of two independent components: $\bU_{L_*^\perp}^\top\bx$ and $\bU_{L_*}^\top\bx$, sampled independently from $N(0,[\bSigma^{(\text{out})}]_{L_*,L_*})$ and $N(0,[\bSigma^{(\text{out})}]_{L_*^\perp,L_*^\perp})=N(0,\bI_{(D-d)})$. 
Note that $\tilde{\bx}$ only depends on the first component and it can be considered as sampled from 
$N\!(0,\overline{\bSigma}_*^{(\text{out})}).$ 
Therefore  by Lemma~\ref{lemma:S_M},
\begin{multline}
    \label{eq:S_M_apply2}
\overline{\bSigma}_*^{(\text{out})} \text{ and } 
\mathbb{E}_{\bx\in\calX_{\text{out}}}{\tilde{\bx}\tilde{\bx}^\top}/{\|\tilde{\bx}\|^2+\|([\bSigma_*]_{L_*^\perp,L_*^\perp})^{-0.5}\bU_{L_*^\perp}^\top\bx\|^2}\\ \text{ have the same eigenvectors and order of eigenvalues.}
\end{multline}

Let $\bu_1$ and $\bu_d$ be the largest and smallest eigenvectors of $\overline{\bSigma}_*^{(\text{in})}$, then by \eqref{eq:S_M_apply1}, \eqref{eq:S_M_apply2},  
and \eqref{eq:TME_cond}, they are the smallest and the largest eigenvectors of $\overline{\bSigma}_*^{(\text{out})}$ respectively. That is,
\[
\frac{\bu_1^\top \overline{\bSigma}_*^{(\text{in})} \bu_1}{\bu_d^\top \overline{\bSigma}_*^{(\text{in})} \bu_d}\geq 1\geq \frac{\bu_1^\top \overline{\bSigma}_*^{(\text{out})} \bu_1}{\bu_d^\top \overline{\bSigma}_*^{(\text{out})} \bu_d}.
\]
However, for $c':=\max\left(\frac{\lambda_{\max}([\bSigma^{(\text{out})}]_{L_*,L_*})}{\lambda_{\min}([\bSigma^{(\text{in})}]_{L_*,L_*})}, \frac{\lambda_{\max}([\bSigma^{(\text{in})}]_{L_*,L_*})}{\lambda_{\min}([\bSigma^{(\text{out})}]_{L_*,L_*})}\right)>1$ and any $\bu \in \R^d$, $1/c'\leq ({\bu^\top [\bSigma^{(\text{in})}]_{L_*,L_*} \bu})/({\bu^\top [\bSigma^{(\text{out})}]_{L_*,L_*}\bu})\leq c'$, and thus (replacing $\bu$ with  $[\bSigma_*]_{L_*,L_*})^{-0.5} \bu$)
\[
\frac{1}{c'}\leq \frac{\bu^\top [\bSigma_*]_{L_*,L_*}^{-0.5}[\bSigma^{(\text{in})}]_{L_*,L_*}[\bSigma_*]_{L_*,L_*}^{-0.5} \bu}{\bu^\top [\bSigma_*]_{L_*,L_*}^{-0.5}[\bSigma^{(\text{out})}]_{L_*,L_*}[\bSigma_*]_{L_*,L_*}^{-0.5} \bu} \equiv  \frac{\bu^\top \overline{\bSigma}_*^{(\text{in})} \bu}{\bu^\top \overline{\bSigma}_*^{(\text{out})} \bu}\leq c'.
\]
As a result,
\begin{align*}
\frac{\bu_1^\top \overline{\bSigma}_*^{(\text{in})} \bu_1}{\bu_d^\top \overline{\bSigma}_*^{(\text{in})} \bu_d}
\leq \frac{c' \bu_1^\top \overline{\bSigma}_*^{(\text{out})} \bu_1}{\frac{1}{c'}\bu_d^\top \overline{\bSigma}_*^{(\text{out})} \bu_d}=c'^2\frac{ \bu_1^\top \overline{\bSigma}_*^{(\text{out})} \bu_1}{\bu_d^\top \overline{\bSigma}_*^{(\text{out})} \bu_d}\leq c'^2,
\end{align*}
where the last inequality follows from the fact that $\bu_1$ and $\bu_d$ are the smallest and the largest eigenvectors of $\overline{\bSigma}_*^{(\text{out})}$.
Therefore, the condition number of $\overline{\bSigma}_*^{(\text{in})}$ is bounded by $c'^2$. We note that for $\bB=[\bSigma^{(\text{in})}]_{L_*,L_*}^{0.5}[\bSigma_*]_{L_*,L_*}^{-0.5}$, \begin{align*}&\bB\overline{\bSigma}_*^{(\text{in})} \bB^{-1}\\=&[\bSigma^{(\text{in})}]_{L_*,L_*}^{0.5}[\bSigma_*]_{L_*,L_*}^{-0.5}[\bSigma_*]_{L_*,L_*}^{-0.5} [\bSigma^{(\text{in})}]_{L_*,L_*} [\bSigma_*]_{L_*,L_*}^{-0.5} [\bSigma_*]_{L_*,L_*}^{0.5}[\bSigma^{(\text{in})}]_{L_*,L_*}^{-0.5}
\\=&
[\bSigma^{(\text{in})}]_{L_*,L_*}^{0.5}([\bSigma_*]_{L_*,L_*})^{-1}[\bSigma^{(\text{in})}]_{L_*,L_*}^{0.5}.\end{align*} Since this matrix is algebraically-similar to $\overline{\bSigma}_*^{(\text{in})}$, its condition number is also  bounded above by $c'^2$. We note that 
$[\bSigma_*]_{L_*,L_*}= [\bSigma^{(\text{in})}]_{L_*,L_*}^{0.5}(\bB\overline{\bSigma}_*^{(\text{in})} \bB^{-1})^{-1}[\bSigma^{(\text{in})}]_{L_*,L_*}^{0.5}$, which implies (recall Lemma~\ref{lemma:matrixeigenvalue})
\begin{align*}\sigma_1([\bSigma_*]_{L_*,L_*})\leq \sigma_1([\bSigma^{(\text{in})}]_{L_*,L_*})\sigma_1\left((\bB\overline{\bSigma}_*^{(\text{in})} \bB^{-1})^{-1}\right),\\\sigma_d([\bSigma_*]_{L_*,L_*})\geq \sigma_d([\bSigma^{(\text{in})}]_{L_*,L_*})\sigma_d\left((\bB\overline{\bSigma}_*^{(\text{in})} \bB^{-1})^{-1}\right).\end{align*} Therefore, \begin{equation}\label{eq:lemma51b_condition}{\sigma_1([\bSigma_*]_{L_*,L_*})}/{\sigma_d([\bSigma_*]_{L_*,L_*})}\leq c'^2\kappa_{\text{in}},\end{equation} where $\kappa_{\text{in}}$ was defined in \eqref{eq:kappa_in_out}. 

Lastly, we bound $\sigma_d([\bSigma_*]_{L_*,L_*})/\sigma_1([\bSigma_*]_{L_*^\perp,L_*^\perp})$. 
Applying the TME condition \eqref{eq:TME_definition} while taking the expectation, we have
\begin{equation}\label{eq:TME_definition_asymptotic}
\frac{1}{D}\bSigma_*=\Expect_{\xbm}\frac{\xbm\xbm^\top}{\xbm^\top\bSigma_*^{-1}\xbm}.
\end{equation}
We note that \eqref{eq:TME_rot_invariance}, Lemma~\ref{lemma:special}(a), and the first statement of Lemma~\ref{lemma:special}(b) imply that $[\bSigma_*]_{L_*^\perp,L_*^\perp}=\bI$, and $[\bSigma_*]_{L_*,L_*^\perp}=\bm{0}$. For  $\eta_0:=1/\sigma_1([\bSigma_*]_{L_*,L_*})$, we observe \begin{align*}&\xbm^\top\bSigma_*^{-1}\xbm=\bx_{L_*}^\top[\bSigma_*]_{L_*,L_*}^{-1} \bx_{L_*}+\bx_{L_*^\perp}^\top[\bSigma_*]_{L_*^\perp,L_*^\perp}^{-1} \bx_{L_*^\perp}\geq \frac{\|\bx_{L_*}\|^2}{\sigma_1([\bSigma_*]_{L_*,L_*})}+\|\bx_{L_*^\perp}\|^2\\=&{\|\bx_{L_*^\perp}\|^2}+ \eta_0{\|\bx_{L_*}\|^2}.\end{align*} As a result, restricting \eqref{eq:TME_definition_asymptotic} to the $D-d\times D-d$ submatrix represented by $[\cdot]_{L_*^\perp,L_*^\perp}$ gives
\[
\frac{1}{D}\bI_{(D-d)\times (D-d)}=\frac{1}{D}([\bSigma_*]_{L_*^\perp,L_*^\perp})=\mathbb{E}_{\bx}\frac{\bx_{L_*^\perp}\bx_{L_*^\perp}^\top}{\xbm^\top\bSigma_*^{-1}\xbm}\psdleq \mathbb{E}_{\bx}\frac{\bx_{L_*^\perp}\bx_{L_*^\perp}^\top}{{\|\bx_{L_*^\perp}\|^2}+ \eta_0{\|\bx_{L_*}\|^2}}.
\]
Taking traces from both sides, we have
\begin{equation}\label{eq:5.1_b3}
\mathbb{E}_{\bx}\frac{\|\bx_{L_*^\perp}\|^2}{{\|\bx_{L_*^\perp}\|^2}+ \eta_0{\|\bx_{L_*}\|^2}}\geq \frac{(D-d)}{D}.
\end{equation}

Note that in the LHS of \eqref{eq:5.1_b3}, $\bx_{L_*^\perp}$ is only nonzero when $\bx\in\calX_{\text{out}}$, which happens with probability $n_0/N$. In addition, $[\bSigma^{\text{out}}]_{L_*^\perp,L_*^\perp}=\bI_{(D-d)\times (D-d)}$, which implies $\sigma_1(\bSigma^{\text{out}})\geq 1$ and  $\sigma_D([\bSigma^{\text{out}}])\geq 1/\kappa_{\text{out}}$, and consequently,    $[\bSigma^{\text{out}}]_{L_*,L_*}\psdgeq \bI_{d\times d}/{\kappa_{\text{out}}} $. We thus note that if $\bx\in\calX_{\text{out}}$, then $\|\bx_{L_*^\perp}\|^2$ follows a $\chi^2_{D-d}$ distribution, and $\|\bx_{L_*}\|^2$ is bounded below by a $\chi^2_d$ distribution scaled by $1/\kappa_{\text{out}}$. Let $X_1\sim \chi^2_{D-d}$ and $X_2\sim \chi^2_{d}$, then the LHS of \eqref{eq:5.1_b3} can be bounded above by 
\[
\mathbb{E}_{\bx}\frac{\|\bx_{L_*^\perp}\|^2}{{\|\bx_{L_*^\perp}\|^2}+ \eta_0{\|\bx_{L_*}\|^2}}\leq \frac{n_0}{N}
\mathbb{E}_{X_1\sim \chi_{D-d}^2,X_2\sim \chi_{d}^2}\frac{X_1}{X_1+\frac{\eta_0}{\kappa_{\text{out}}}  X_2}.
\]
The above equation and \eqref{eq:5.1_b3} imply
\begin{equation}\label{eq:eta0}
\mathbb{E}_{X_1\sim \chi_{D-d}^2,X_2\sim \chi_{d}^2}\frac{X_1}{X_1+\frac{\eta_0}{\kappa_{\text{out}} } X_2}\geq \frac{N(D-d)}{n_0D}=1-(1-\dssnr)\frac{d}{D}.
\end{equation}

Next, We use \eqref{eq:eta0} to upper bound  $\eta_0=1/\sigma_1([\bSigma_*]_{L_*,L_*})$. 
An equivalently reformulation of \eqref{eq:eta0} is  
\begin{equation}\label{eq:eta0_upperbound}
\mathbb{E}_{X_1\sim \chi_{D-d}^2,X_2\sim \chi_{d}^2}\frac{ {\eta_0}X_2}{{\kappa_{\text{out}}}X_1+ {\eta_0} X_2}\leq (1-\dssnr)\frac{d}{D}.
\end{equation}
On the other hand, applying Jensen's inequality, basic probabilistic estimates, and at last the fact that $\Pr(\chi_d^2\geq d/2)\geq 1/2$ (for $d\geq 3$ it follows from \cite{CHEN1986281} and for $d=1$, 2, one can directly verify it), yields
\begin{align*}&
\mathbb{E}_{X_1\sim \chi_{D-d}^2,X_2\sim \chi_{d}^2}\frac{\eta_0 X_2}{{\kappa_{\text{out}}}X_1+ \eta_0 X_2}
\geq 
\mathbb{E}_{X_2\sim \chi_{d}^2}\frac{\eta_0 X_2}{{\kappa_{\text{out}}}\mathbb{E}_{X_1\sim \chi_{D-d}^2}X_1+ \eta_0 X_2}
\\&=
\mathbb{E}_{X_2\sim \chi_{d}^2}\frac{\eta_0 X_2}{{\kappa_{\text{out}}}(D-d)+ \eta_0 X_2}
\geq 
\frac{\eta_0d/2}{{\kappa_{\text{out}}}(D-d)+ \eta_0d/2}\Pr(X_2\geq d/2)
\\&\geq 
\frac{1}{2} \frac{\eta_0d/2}{{\kappa_{\text{out}}}(D-d)+ \eta_0d/2}.
\end{align*}
Combining it with \eqref{eq:eta0_upperbound} results in 
$$\frac{1}{2} \frac{\eta_0d/2}{{\kappa_{\text{out}}}(D-d)+ \eta_0d/2}\leq (1-\dssnr)\frac{d}{D},$$ and consequently, 
\[
\eta_0\leq 
\frac{4\kappa_{\text{out}}(1-\dssnr)(D-d)}{D-2(1-\dssnr)d}.
\]

Applying $[\bSigma_*]_{L_*^\perp,L_*^\perp}=\bI$, and at last the combination of \eqref{eq:lemma51b_condition}, the above bound, and   $\eta_0=1/\sigma_1([\bSigma_*]_{L_*,L_*})$,     results in 
\begin{multline*}
\frac{\sigma_d([\bSigma_*]_{L_*,L_*})}{\sigma_1([\bSigma_*]_{L_*^\perp,L_*^\perp})}=\sigma_d([\bSigma_*]_{L_*,L_*})=\sigma_1([\bSigma_*]_{L_*,L_*})\frac{\sigma_d([\bSigma_*]_{L_*,L_*})}{\sigma_1([\bSigma_*]_{L_*,L_*})} \\
\geq \frac{D-2(1-\dssnr)d}{4\kappa_{\text{out}}(1-\dssnr)(D-d)}\,\frac{1}{c'^2\kappa_{\text{in}}}.
\end{multline*}
That is, the second inequality in part (b) is proved for $\delta_S\in [1-\frac{D}{4d},1)$ and $C_{\delta_S}={D}/({8c'^2\kappa_{\text{in}}\kappa_{\text{out}}(D-d)})$.
\end{proof}
\begin{proof}[Proof of Lemma~\ref{lem:geo_mean}:]
\textbf{Step 1: Simplifying the identity.} 
We note that \eqref{eq:geo_mean} is equivalent with
\begin{equation}
\label{eq:modify_geo_mean}
\mathbf{A}^{-1/2}\left(\left({\mathbf{A} + \mathbf{B}}\right) \# (\mathbf{A}^{-1} + \mathbf{B}^{-1})^{-1}\right)\mathbf{A}^{-1/2} = \mathbf{A}^{-1/2}\left(\mathbf{A} \# \mathbf{B}\right)\mathbf{A}^{-1/2}
\end{equation}
The geometric mean is invariant under congruence transformation by any invertible matrix~\cite[$\text{(II)}_0$]{KuboAndo1980}, in particular, by $\bA^{-1/2}$:
\begin{equation}
\mathbf{\bA}^{-1/2} (\mathbf{X} \# \mathbf{Y}) \mathbf{\bA}^{-1/2} = (\mathbf{\bA}^{-1/2} \mathbf{X} \mathbf{\bA}^{-1/2}) \# (\mathbf{\bA}^{-1/2} \mathbf{Y} \mathbf{\bA}^{-1/2}).
\label{eq:geo_mean_congruence}    
\end{equation}
Applying \eqref{eq:geo_mean_congruence} to both sides of \eqref{eq:modify_geo_mean} yields
\begin{equation}\mathbf{A}^{-1/2}({\mathbf{A} + \mathbf{B}})\mathbf{A}^{-1/2} \# \mathbf{A}^{-1/2}(\mathbf{A}^{-1} + \mathbf{B}^{-1})^{-1}\mathbf{A}^{-1/2} = \mathbf{A}^{-1/2}\mathbf{A}\mathbf{A}^{-1/2} \# \mathbf{A}^{-1/2}\mathbf{B}\mathbf{A}^{-1/2}\label{eq:matrix_equality2}.\end{equation}

Letting  $\mathbf{K}=\mathbf{A}^{-1/2}\bB\mathbf{A}^{-1/2}$, we note that 
\begin{align*}
&\mathbf{A}^{-1/2}(\mathbf{A}^{-1} + \mathbf{B}^{-1})^{-1}\mathbf{A}^{-1/2} = \mathbf{A}^{-1/2}\left(\mathbf{A}^{-1/2}(\mathbf{I} + \mathbf{A}^{1/2}\mathbf{B}^{-1}\mathbf{A}^{1/2})\mathbf{A}^{-1/2}\right)^{-1}\mathbf{A}^{-1/2}\\=&(\mathbf{I} + \mathbf{A}^{1/2}\mathbf{B}^{-1}\mathbf{A}^{1/2})^{-1}=(\mathbf{I} + \mathbf{K}^{-1})^{-1}
\end{align*}
and similarly, $\mathbf{A}^{-1/2}(\mathbf{B}^{-1})^{-1}\mathbf{A}^{-1/2}=\mathbf{K}$. Consequently, \eqref{eq:matrix_equality2}, and thus also \eqref{eq:geo_mean}, are equivalent to
\begin{equation}(\mathbf{I} + \mathbf{K})  \#  (\mathbf{I} + \mathbf{K}^{-1})^{-1}  = \mathbf{I}\#\mathbf{K}.\label{eq:matrix_equality3}\end{equation}

\textbf{Step 2: Verification of \eqref{eq:matrix_equality3}.} We use the fact that if $\mathbf{X}$, $\mathbf{Y} \in S_{++}(D)$, commute then their geometric mean is  $\mathbf{X}\#\mathbf{Y}=(\mathbf{X}\mathbf{Y})^{1/2}$. 

Applying this property, the RHS of \eqref{eq:matrix_equality3} is $\mathbf{K}^{1/2}$, and the LHS of \eqref{eq:matrix_equality3} is
\[
\left((\mathbf{I} + \mathbf{K})(\mathbf{I} + \mathbf{K}^{-1})^{-1}\right)^{1/2}=\left((\mathbf{I} + \mathbf{K})(\mathbf{K} + \mathbf{I})^{-1}\mathbf{K}\right)^{1/2}=\mathbf{K}^{1/2}.
\]
As a result, \eqref{eq:matrix_equality3} is verified and  Lemma~\ref{lem:geo_mean} is proved.
\end{proof}
\begin{proof}[Proof of Lemma~\ref{lemma:S_M}]
We first prove that $\bS$ and $\bM$ have the same  eigenspaces. That is, we prove the existence of an orthogonal $\bQ \in O(d)$ and scalars $\{\alpha_i\}_{i=1}^d$ such that
\begin{equation}
\label{eq:S_M}
\bS = \bQ \, \mathrm{diag}(\alpha_1,\dots,\alpha_d)\, \bQ^\top
\quad\text{and}\quad
\bM = \bQ \, \mathrm{diag}(\lambda_1,\dots,\lambda_d) \,\bQ^\top.    
\end{equation}
WLOG assume that $\bM$ is a scalar matrix with $\lambda_1\geq\cdots\geq \lambda_d$ and $\bQ$ is identity. Then it is sufficient to prove that $\bS$ is diagonal with nonincreasing diagonal entries.

For $i,j=1$, $\ldots$, $d$, $x_i\sim N(0,\lambda_i)$, and
$
\bS_{ij}=\mathbb{E}\!\left[{x_ix_j}/({\sum_{k=1}^dx_k^2+Y})\right]$. 
Since $\bS_{ij}$ is an odd function of $x_i$ and the  density of $x_i$ is even, $\bS_{ij}=0$ when $i \neq j$, which proves \eqref{eq:S_M}. 

To show that the eigenvalue ordering is preserved we can assume WLOG \( \lambda_i>\lambda_j>0\) and show that \( \alpha_i>\alpha_j>0\). 
We note that \(x_k\) is  independent with \(x_k\sim\mathcal N(0,\lambda_k)\). We fix the values of the remaining coordinates \(\{x_k\}_{k\neq i,j}\) and of \(Y\ge0\), and set
\[
E \;:=\; \sum_{k\neq i,j} x_k^2 + Y \ge 0,\,\, g(a,b) \;:=\; \frac{a^2-b^2}{E + a^2 + b^2},\qquad (a,b)\in\mathbb R^2,
\]
and denote by \(f_k(x)\) the marginal Gaussian density, $f_k(x)=e^{-x^2/(2\lambda_k)}/{\sqrt{2\pi\lambda_k}}$. 
We will prove the following positivity result:
\begin{equation}
\label{eq:cond_exp_withE}
\mathbb{E}_{x_i,x_j}\!\left[\frac{x_i^2-x_j^2}{E + x_i^2+x_j^2}\right]
= \iint_{\mathbb R^2} g(a,b)\,f_i(a)f_j(b)\,da\,db \;>\; 0.    
\end{equation}
This means that the conditional expectation of ${x_i^2-x_j^2}/({\sum_{k=1}^d x_k^2 + Y})$ (given \(\{x_k\}_{k\neq i,j}\) and \(Y\)) is strictly positive.  
Taking the outer expectation over \(\{x_k\}_{k\neq i,j}\),   
the overall expectation of the same random variable, which coincides with $\alpha_i-\alpha_j$ is positive, and thus $\alpha_i > \alpha_j$. 

We first explore some symmetry. We define $D_+:=\{(a,b): a^2>b^2\}$, and $D_-:=\{(a,b): a^2<b^2\}$. 
We write the integral over \(D_+\) and \(D_-\). For each \((a,b)\in D_+\) consider its swap \((b,a)\in D_-\). We express the contribution of the pair \((a,b)\) and \((b,a)\) to the integral, using the observation \(g(b,a) = -g(a,b)\):
\begin{equation}
\label{eq:Igf}
I(a,b)
:= g(a,b)f_i(a)f_j(b) + g(b,a)f_i(b)f_j(a) = g(a,b)\big(f_i(a)f_j(b)-f_i(b)f_j(a)\big).    
\end{equation}

Next, we show that for every \((a,b)\in D_+\), $f_i(a)f_j(b)-f_i(b)f_j(a)>0$ by computing the log-ratio of the density products:
$\ln\!\frac{f_i(a)f_j(b)}{f_i(b)f_j(a)}
= -\frac{a^2}{2\lambda_i}-\frac{b^2}{2\lambda_j}+\frac{b^2}{2\lambda_i}+\frac{a^2}{2\lambda_j}
= (a^2-b^2)\frac{\lambda_i-\lambda_j}{2\lambda_i\lambda_j}>0.$ 

For \((a,b)\in D_+\), \(g(a,b)>0\) and by the previous step \(f_i(a)f_j(b)-f_i(b)f_j(a)>0\), and consequently, by \eqref{eq:Igf} $I(a,b)>0$. 
Integrating over all such pairs gives
$\iint_{D_+} I(a,b)\,da\,db \;>\; 0$ and thus proves \eqref{eq:cond_exp_withE} and thus the lemma.

\end{proof}
\end{supplement}

\end{document}